\newtheorem{theorem}{Theorem}
\newtheorem{proposition}[theorem]{Proposition}
\newtheorem{corollary}[theorem]{Corollary}
\newtheorem{lemma}[theorem]{Lemma}
\newtheorem{claim}[theorem]{Claim}
\newtheorem{case}{Case}
\theoremstyle{definition}
\newtheorem{remark}[theorem]{Remark}
\newtheorem{example}[theorem]{Example}
\newtheorem{definition}[theorem]{Definition}
\newtheorem{conjecture}[theorem]{Conjecture}
\DeclareMathOperator{\VCdim}{dim_{VC}}
\DeclareMathOperator{\edim}{dim_{\mathbb{E}}}
\DeclareMathOperator{\cdim}{cdim}
\DeclareMathOperator{\ext}{ex}
\DeclareMathOperator{\conv}{conv}
\newcommand{\M}{\ensuremath{\mathcal{M}}}
\newcommand{\covectors}{\ensuremath{\mathcal{L}}}
\newcommand{\topes}{\ensuremath{\mathcal{T}}}
\DeclareMathOperator{\Sep}{\mathrm Sep}
\renewcommand{\SS}{\ensuremath{\mathcal{S}}}
\newcommand{\Q}{\mathrm Q}
\newcommand{\cA}{\mathcal{A}\xspace}
\newcommand{\cC}{\mathcal{C}\xspace}
\newcommand{\cF}{\mathcal{F}\xspace}
\newcommand{\cH}{\mathcal{H}\xspace}
\newcommand{\cI}{\mathcal{I}\xspace}
\newcommand{\cM}{\mathcal{M}\xspace}
\newcommand{\cR}{\mathcal{R}\xspace}
\newcommand{\cP}{\mathcal{P}\xspace}
\newcommand{\cO}{\mathcal{O}\xspace}
\DeclareMathOperator{\downarr}{\downarrow}
\DeclareMathOperator{\uparr}{\uparrow}
\newcommand{\R}{\mathbb{R}\xspace}
\newcommand{\uX}{\ensuremath{\underline{\mathcal{X}}}\xspace}
\newcommand{\oX}{\ensuremath{\overline{\mathcal{X}}}\xspace}
\DeclareMathOperator{\vcd}{VC-dim}
\newcommand{\oldqed}{}
\def\endofFact{\hfill$\Diamond$}
\newenvironment{claimproof}{
  \renewcommand{\oldqed}{\qed}
  \renewcommand{\qed}{\endofFact}
  \begin{proof}
  \leftskip15pt\relax
}{
  \end{proof}
  \renewcommand{\qed}{\oldqed}
} 
\title{Geometry of convex geometries}
\author{J\'er\'emie Chalopin, Victor Chepoi, and Kolja Knauer}
\date{\today}
\begin{document}

\maketitle

\begin{abstract} We prove that any convex geometry $\cA=(U,\cC)$ 
on $n$ points and any ideal $\cI=(U',\cC')$ of $\cA$ can be realized as the intersection pattern 
of an open convex polyhedral cone $K\subseteq {\mathbb R}^n$ with the orthants of ${\mathbb R}^n$. 
Furthermore, we show that $K$ can be chosen to have at most $m$ facets, 
where $m$ is the number of critical rooted circuits of $\cA$. We also show that any convex geometry of 
convex dimension $d$ is realizable in ${\mathbb R}^d$ and that any multisimplicial complex 
(a basic example of an ideal of a convex geometry) of dimension $d$ is realizable in 
${\mathbb R}^{2d}$ and that this is best possible. From our results it also follows 
that distributive lattices of dimension $d$ are realizable 
in ${\mathbb R}^{d}$ and that median systems are realizable. We leave open 
whether each median system of dimension 
$d$ is realizable in ${\mathbb R}^{O(d)}$. 
\end{abstract}

\section{Introduction}
Several fundamental combinatorial structures constitute abstract generalizations of geometric settings. Matroids generalize the linear independence in vector spaces, oriented matroids (OMs) capture the combinatorics of  regions in a central hyperplane arrangement in ${\mathbb R}^d$, ample/lopsided sets (AMPs) encode the regions of the arrangement of coordinate hyperplanes intersected with a convex set, and  convex geometries/antimatroids represent an abstraction of Euclidean convexity restricted to a finite set. Finally, complexes of oriented matroids (COMs) are a common generalization of oriented matroids and ample sets and capture  the combinatorics of  regions in an arbitrary hyperplane arrangement in ${\mathbb R}^d$ restricted to a convex set $K$. Although this geometric model is a desirable property for a respective combinatorial structure, the realizability question is hard. For example, the problem of characterizing which oriented matroids come from hyperplane arrangements is intractable~\cite{Mne88,Ric95,Sho91} and the realizability of a convex geometry in ${\mathbb R}^2$ is $\exists{\mathbb R}$-complete  \cite{AW10,HoMe}. All the above structures can be seen as set systems, see~\Cref{setsystem}. This  allows to view convex geometries as ample sets and ample sets as COMs.

We investigate the realizability question for convex
geometries and generalize them to ideals of convex geometries. Convex geometries (alias antimatroids)
have been introduced and investigated by Edelman and Jamison
\cite{EdJa} in the context of abstract convexity and by Korte and
Lovasz \cite{KoLo,KoLoSch} in combinatorics. Kashiwabara, Nakamura, and Okamoto \cite{KaNaOk}  proved that any convex geometry 
 $\cA=(U, \cC)$ on $n$ points can be realized in ${\mathbb
  R}^n$ using a  generalized convex shelling. Richter and Rogers \cite{RiRo} proved that $\cA$  can be realized in this way in ${\mathbb R}^d$, where $d$ is the convex dimension of $\cA$. In this paper, we consider a simpler (and dual) version of realizability  via hyperplane arrangements and convex sets, as in the case of OMs and COMs. We prove that any convex
geometry $\cA$  is realizable in this way in ${\mathbb
  R}^n$. Furthermore, we show that the convex set 
realizing $\cA$ can be chosen to be a polyhedral cone with at most $m$
facets, where $m$ is the number of critical rooted  circuits of $\cA$.  We
also establish that any ideal $\cI=(U,\cC')$ of $\cA$ is realizable in
${\mathbb R}^n$ by a convex polyhedron with $m+k$ facets where $k$ is
the number of positive circuits. We also show that any convex geometry of convex dimension $d$ is realizable in ${\mathbb R}^d$. 
As an application of our results on ideals, we show that any multisimplicial complex  of dimension $d$ is realizable in ${\mathbb R}^{2d}$ and this is optimal. It follows that distributive lattices of dimension $d$ are realizable in ${\mathbb R}^{d}$ and that median systems are realizable. We leave open whether each median system of dimension $d$ is realizable in ${\mathbb R}^{O(d)}$ but show that any tree (median system of dimension 1) is realizable in ${\mathbb R}^{2}$. 

\section{Preliminaries} 
In this section, we define the main combinatorial structures investigated in this paper and their realizability. 

\subsection{Set families and systems of sign vectors} 

Let $U$ be a set of size $n$. A \emph{set family} $\SS$ is any collection  of subsets of $U$. We denote by $\SS^*$ the complement 
 $2^U\setminus \SS$ of the family $\SS$. 
Any set family $\SS\subseteq 2^U$ can be viewed as a subset of
vertices of the $n$-dimensional hypercube $\Q_n=\Q(U)$. Denote by $G(\SS)$ the subgraph of $\Q_n$ induced
by the vertices of $\Q_n$ corresponding to the sets of $\SS$; $G(\SS)$ is called the \emph{1-inclusion
graph} of $\SS$. A set family $\SS$ is called \emph{isometric} if $G(\SS)$ is an \emph{isometric subgraph} of the hypercube $\Q_n$, i.e., 
the distances in $G(\SS)$ and in $\Q_n$ between any two vertices of $G(\SS)$ are equal. 
An {\it $X$-cube} $Q$ of $\Q_n$ is the 1-inclusion graph
of the set family  $\{ Y\cup X': X'\subseteq X\}$, where $Y$ is a subset 
of $U\setminus X$, called the \emph{support} of $Q$. If $|X|=n'$, then any 
$X$-cube is a $n'$-dimensional subcube of $\Q_n$ and $\Q_n$ contains $2^{n-n'}$ 
$X$-cubes. 

Let $\covectors$ be a {\it system of
sign vectors} on $U$, i.e., maps from $U$ to $\{-1,0,+1\}$. The elements of
$\covectors$ are referred to as \emph{covectors} and denoted by capital letters
$X, Y, Z$. For $X \in \covectors$, the subset $\underline{X} = \{e\in U:
X_e\neq 0\}$ is the \emph{support} of $X$ and  its complement
$X^0=U\setminus \underline{X}=\{e\in U: X_e=0\}$ is the \emph{zero set} of $X$. 

The systems of sign vectors generalize set families.  
Indeed, any set family $(U,\SS)$ can be encoded as a system of sign vectors by setting for each set $X\in \SS$, $X_e=-1$ if $e\notin X$ and $X_e=+1$ if $e\in X$. In this representation, each set $X$ is encoded by a $\{-1,+1\}$-vector. The encoding with  $\{-1,0,+1\}$-vectors is useful when encoding cubes of $Q_n$. Indeed, each $X$-cube $Q$ with support $Y$ can be encoded by $\{-1,0,+1\}$-vector $X(Q)$, where $X(Q)_e=+1$ if $e\in Y$, $X(Q)_e=-1$ if $e\in (U\setminus X)\setminus Y$, and $X(Q)_e=0$ if $e\in X$. 

For each subset $X\subset U$, the (open) \emph{$X$-orthant} of ${\mathbb R}^n$ is the set $\cO(Y)$ of all points $x=(x_1,\ldots,x_n)\in {\mathbb R}^n$ 
such that $x_e>0$ if $e\in X$ and $x_e<0$ if $e\notin X$. More generally, for a covector $X\in \{ -1,0,+1\}^U$, the \emph{$X$-generalized orthant} is the set of all points $x=(x_1,\ldots,x_n)\in {\mathbb R}^n$  such that $x_e>0$ if $X_e=+1$ and $x_e<0$ if $X_e=-1$. Notice that 
the $X$-generalized orthant is the union of all $Y$-orthants $\cO(Y)$ such that $Y=\underline{X}\cup Y'$ with $Y'\subseteq X^0$.

\subsection{OMs and COMs} 
We recall the basic theory of OMs
and COMs from~\cite{BjLVStWhZi} and~\cite{BaChKn}, respectively. 
Co-invented by Bland and Las Vergnas~\cite{BlLV}
and Folkman and Lawrence~\cite{FoLa}, and further investigated  by Edmonds and Mandel~\cite{ed-ma-82} and
many other authors, oriented matroids (OMs) represent a unified combinatorial theory of orientations of ordinary matroids,
which simultaneously captures the basic properties of sign vectors representing the regions in a hyperplane
arrangement  in ${\mathbb R}^d$ and of  sign vectors of the circuits  in a directed
graph. OMs provide a framework for the analysis of
combinatorial properties of geometric configurations occurring  in discrete
geometry and in machine learning. Point and vector configurations, order types,
hyperplane and pseudo-line arrangements, convex polytopes, directed graphs, and
linear programming find a common generalization in this language. The
Topological Representation Theorem of~\cite{FoLa} connects the theory of OMs on a
deep level to arrangements of pseudohyperplanes and distinguishes it from the
theory of ordinary matroids. 

%
Let $\covectors\subseteq \{ -1,0,+1\}^U$ be a system of sign vectors. For the sake of this paper we assume that $\covectors$ is \emph{simple}, {i.e., $\{X_e:  X\in \covectors\}=\{-1,0,+1\}$.} 
%
For $X,Y\in \covectors$, $\Sep(X,Y)=\{e\in U: X_eY_e=-1\}$ is the
\emph{separator} of $X$ and $Y$. The \emph{composition} of $X$ and $Y$ is the
sign
vector $X\circ Y$, where for all $e\in U$,
$(X\circ Y)_e = X_e$ if $X_e\ne 0$  and  $(X\circ Y)_e=Y_e$ if $X_e=0$.

\begin{definition} \label{def:OM}
	An \emph{oriented matroid} (OM)~\cite{BjLVStWhZi} is a system $\cM = (U,\covectors)$ of sign vectors
	 satisfying
	\begin{itemize}
		\item [{\bf (C)}] ({\sf Composition)} $X\circ Y \in \covectors$ for all
		$X,Y \in \covectors$.
		\item[{\bf (SE)}] ({\sf Strong elimination}) for each pair
		$X,Y\in\covectors$ and for each $e\in \Sep(X,Y)$, there exists $Z \in
		\covectors$ such that $Z_e=0$ and $Z_f=(X\circ Y)_f$ for all $f\in
		U\setminus \Sep(X,Y)$.
		\item[{\bf (Sym)}] ({\sf Symmetry}) $-\covectors=\{ -X: X\in
		\covectors\}=\covectors,$ that is, $\covectors$ is closed under sign
		reversal.
	\end{itemize}
\end{definition}

Complexes of Oriented Matroids (COMs) were introduced
by Bandelt, Chepoi, and Knauer~\cite{BaChKn} as a natural common generalization of ample sets and OMs. 
They are defined by replacing the global axiom (Sym) with a weaker local axiom:

\begin{definition} \label{def:COM}
	A \emph{complex of oriented matroids} (COM)~\cite{BaChKn} is a system of sign vectors $\cM=(U,\covectors)$
	 satisfying (SE)  and the following axiom:
	\begin{itemize}
		\item[{\bf (FS)}] ({\sf Face symmetry}) $X\circ -Y \in  \covectors$
		for all $X,Y \in  \covectors$.
	\end{itemize}
      \end{definition}

One can  see that OMs are exactly the COMs containing the zero vector ${\bf 0}$,
see~\cite{BaChKn}. Let $\leq$ be the product ordering on $\{-1,0,+1\}^{U} $
relative to  $0 \leq -1, +1$.
The poset $(\covectors,\le)$ of a COM $\cM =(U,\covectors)$ with an artificial maximum
$\hat{1}$ forms the (graded)
\emph{big face (semi)lattice} $\mathcal{F}_{\mathrm{big}}(\cM)$. For $X\in\covectors$ a covector of a COM 
$\cM=(U,\covectors)$, the
\emph{face} of $X$ is $\uparr X:=\{Y\in\covectors: X\leq Y\}$, see~\cite{BaChKn,BjLVStWhZi}. 
By \cite[Lemma 4]{BaChKn}, each face $\uparr X$ of a COM $\cM$ is an OM.

The \emph{topes} $\topes$ of $\covectors$ are the co-atoms of
$\mathcal{F}_{\mathrm{big}}(\cM)$. By simplicity the topes are  $\{-1,+1\}$-vectors 
and  ${\mathcal T}$ can be seen as a family of subsets of $U$. 
For each $T\in\mathcal{T}$, an element
$e\in U$ belongs to the corresponding
set if and only if $T_e=+1$. Since by~\cite{BaChKn,KnMa} the topes determine $\cM$ uniquely, we make frequent use of the following:

\begin{remark}\label{setsystem}
    Every simple COM $\cM$ can be seen as the set family, defined by its set of topes $\mathcal{T}$.
\end{remark}

The \emph{tope graph} $G(\cM)$ of a COM $\cM = (U,\covectors)$ is the
1-inclusion graph of the set $\mathcal T$ of topes of $\covectors$,
i.e., the subgraph of the hypercube induced by the vertices
corresponding to $\topes$. The COM $\cM = (U,\covectors)$ can
be recovered up to relabelling and reorientation from $G(\cM)$ and $G(\cM)$ is an isometric 
subgraph of $\Q_n$, see~\cite{BaChKn,KnMa}.



%

\subsection{Ample sets}  Ample sets~\cite{BaChDrKo} (originally introduced as lopsided sets by Lawrence~\cite{La})  are combinatorial structures somewhat opposed to oriented matroids.  They capture an important variety of combinatorial objects, e.g.,  diagrams of (upper locally) distributive lattices, median graphs or CAT(0) cube complexes, convex geometries and conditional antimatroids, see \cite{BaChDrKo}. 
\emph{Ample sets}   
are exactly the COMs, in which all faces are cubes. 
Ample sets can be defined and characterized in a multitude of combinatorial ways in the language of set families, i.e., in terms of topes; see \cite{BaChDrKo,BoRa,La}. One of them is via shattering and strong shattering.  

Let $\SS$ be a family of subsets of an $n$-element  set $U$.  
For a set $Y\subset U$, the \emph{trace} of $\SS$ to $Y$ is defined as $\SS|_Y=\{ X\cap Y: X\in \SS\}$. A subset $X$ of $U$ is \emph{shattered} by $\SS$ if for
all $Y\subseteq X$ there exists $S\in\SS$ such that $S\cap X=Y$, i.e. $\SS_X=2^X$. The \emph{Vapnik-Chervonenkis dimension}
(the \emph{VC-dimension} for short)  $\vcd(\SS)$ of $\SS$ is the cardinality of the largest subset of $U$ shattered by $\SS$.
A subset $X$ of $U$ is \emph{strongly shattered} by $\SS$ if the 1-inclusion graph $G(\SS)$ of $\SS$ contains
an $X$-cube.  Denote by $\oX(\SS)$ and $\uX(\SS)$ the families  consisting of all shattered and
of all strongly shattered sets of $\SS$, respectively.  Clearly,
$\uX(\SS)\subseteq \oX(\SS)$ and both $\oX(\SS)$ and $\uX(\SS)$ are closed under
taking subsets, i.e., $\oX(\SS)$ and $\uX(\SS)$ are \emph{abstract simplicial complexes}.
The VC-dimension  $\vcd(\SS)$ of  $\SS$ is thus the size of a
largest set shattered by $\SS$, i.e., the dimension of the simplicial
complex $\oX(\SS)$. A family  $\mathcal S$ of subsets of $U$ is \emph{ample} whenever the simplicial complexes 
 $\oX(\SS)$ and $\uX(\SS)$ coincide, i.e., each shattered set is strongly shattered. From this definition it follows that 
 the VC-dimension of an ample set $\SS$ coincides with the dimension of a largest cube included in $\SS$. The complement  $\SS^*=2^U\setminus \SS$ of an ample set $\SS$ is also ample. Finally, ample sets are isometric. 

 \subsection{Convex geometries} Convex geometries, introduced and investigated by Edelman and Jamison~\cite{EdJa},  are the abstract convexity spaces satisfying  one of the most important properties of Euclidean convexity: each convex set is the convex hull of its extremal points. Convex geometries are a particular case of ample sets~\cite{BaChDrKo}.
 A \emph{convex geometry} \cite{EdJa} is a pair $\cA=(U,\mathcal{C})$ of a finite \emph{universe} $U$ and a collection of \emph{convex sets} $\mathcal{C}\subseteq 2^U$ satisfying the following three conditions:
\begin{itemize}
    \item[(C1)] $\varnothing\in \mathcal{C}$ and $U\in \mathcal C$;
    \item[(C2)] $X\cap Y\in \mathcal{C}$ for all $X,Y\in \mathcal{C}$;\hfill (intersection-closed)
    \item[(C3)] if $X\in {\mathcal C}\setminus \{ U\}$, then there exists $e\in U\setminus X$ such that $X\cup e\in \mathcal{C}$. \hfill (extendable)
\end{itemize}

See the left side of~\Cref{fig:convsemigeo} for an example.
The \emph{convex hull} $\conv(A)$ of a set $A\subset U$ is the
intersection of all convex sets of $\cA$ containing $A$; by (C2),
$\conv(A)$ is the smallest element of $\cC$ containing $A$.  A point
$x$ of a convex set $X$ is called an \emph{extreme point} of $X$ if
$X\setminus \{ x\}$ is also convex. Denote by $\ext(X)$ the set of all
extreme points of $X$. Convex geometries can be also characterized by
(C1), (C2), and the following axiom:
\begin{itemize}
    \item[(C4)] $X=\conv(\ext(X))$ for any $X\in\cC$.
\end{itemize}
Convex geometries can be also characterized by (C1), (C2) and the \emph{anti-exchange} axiom: 
\begin{itemize}
    \item[(C5)] if $X\in \mathcal{C}$ and $p,q$ are two different points of $U\setminus X$, then $q\in \conv(X\cup p)$ implies that $p\notin \conv(X\cup q)$. 
\end{itemize}
For other characterizations of convex geometries, their properties, and examples, see the foundational paper by Edelman and Jamison \cite{EdJa}. 
Convex geometries are ample sets \cite{BaChDrKo}.  
If $\cA=(U,\mathcal{C})$ is a convex geometry, then the set family $\cA^*$ on $U$ consisting of the complements $U\setminus C$ of sets $C$ of $\mathcal{C}$ is union closed and is called an \emph{antimatroid}.  For a nice exposition of the theory  of convex geometries/antimatroids, see the book by Korte, Lov\'asz, and Schrader \cite{KoLoSch}.  Below we will present a characterization of convex geometries via rooted circuits and the notion of convex dimension. 

\subsection{Bouquets and ideals of convex geometries}

We now consider a generalization of ideals of convex geometries,
originally introduced under the name of conditional antimatroids
in~\cite{BaChDrKo}. Analogously to convex geometries bouquets of
convex geometries are also ample sets~\cite{BaChDrKo}.  A pair
$\cA=(U,\mathcal{C})$ of a finite \emph{universe} $U$ and a collection
of \emph{convex sets} $\mathcal{C}\subseteq 2^U$ is called
\emph{bouquet of convex geometries} if it satisfies the following two
conditions:
\begin{itemize}
    \item[(C1$'$)] $\varnothing\in \mathcal{C}$;
    \item[(C2)] $X\cap Y\in \mathcal{C}$ for all $X,Y\in \mathcal{C}$;
    \item[(C3$'$)] for all $X,Y\in \mathcal{C}$ with $Y\subset X$, there is $e\in X\setminus Y$ such that $Y\cup e\in \mathcal{C}$. \hfill (locally extendable)
\end{itemize}

A pair $\cI=(U',\cC')$ is called an  \emph{ideal of a convex geometry} if there is a convex geometry $\cA=(U,\cC)$ such that $U'\subseteq U$, $\cC'\subseteq \cC$ and if $X\in \cC, Y\in \cC'$ and $X\subseteq Y$, then $X\in \cC'$. Equivalently, for any $X\in \cC'$, its principal ideals 
in $\cC'$ and $\cC$ coincide, where the \emph{principal ideal} $\downarr X$ of $X$ in $\cC$ (respectively, in $\cC'$) is the set of all subsets $Y \subseteq X$ such that $Y \in \cC$ (respectively, $Y \in \cC'$).  
It is easy to see that intervals of bouquets of convex geometries are
convex geometries and that ideals of convex geometries are bouquets
of convex geometries. However, there are bouquets of convex geometries
that are not ideals.

\begin{figure}[htp]
    \centering
    \includegraphics[width=.8\textwidth]{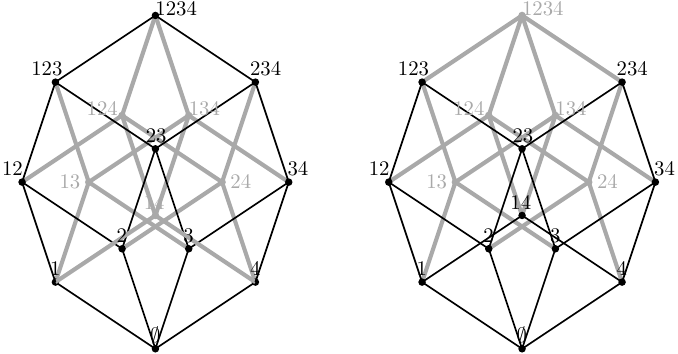}
    \caption{Left: A convex geometry. Right: A bouquet of convex geometries
      that is not an ideal of a convex geometry. Both are represented in black
       as a subgraph of the 4-dimensional hypercube (in gray).
    }\label{fig:convsemigeo}
\end{figure}

\begin{proposition} \label{nonideal}
There exist bouquets of convex geometries that are not ideals of convex geometries.  
\end{proposition}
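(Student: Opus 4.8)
The plan is to verify that the explicit set family drawn in Figure~\ref{fig:convsemigeo} is a bouquet of convex geometries and then to show it cannot be realised as an ideal of any convex geometry, so that Proposition~\ref{nonideal} follows. I would first fix notation: write $\cA=(U,\cC)$ for the family of Figure~\ref{fig:convsemigeo} on $U=\{1,2,3,4\}$, list its convex sets together with their inclusion-maximal members, and check (C1$'$), (C2), (C3$'$) directly. Since (C1$'$) and (C2) are immediate from the list, the only work is (C3$'$): for each strict containment $X\subsetneq Y$ in $\cC$ one exhibits $e\in Y\setminus X$ with $X\cup e\in\cC$. It is convenient here to record the reformulation that bouquets of convex geometries are exactly the intersection-closed ample (lopsided) families containing $\varnothing$: the forward direction is from~\cite{BaChDrKo}, and the converse holds because an ample family is isometric, so a geodesic of its $1$-inclusion graph from $X$ to $Y\supsetneq X$ only adds coordinates, which gives (C3$'$). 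Thus one may instead verify that $\cC$ is intersection-closed, contains $\varnothing$, and is ample (checking that each shattered set of $\cC$ is strongly shattered); this is a bounded computation on the $4$-cube.

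The heart of the argument is a proof by contradiction: assume $\cA$ is an ideal of a convex geometry $\mathcal{G}=(V,\cD)$, so $U\subseteq V$, $\cC\subseteq\cD$, and $\cC$ is downward-closed in $\cD$ in the sense that $X\in\cD$, $Y\in\cC$, $X\subseteq Y$ imply $X\in\cC$. I would then propagate three sources of constraints simultaneously. First, downward-closedness: every $\cD$-convex subset of a member of $\cC$ already lies in $\cC$; in particular every subset of an inclusion-maximal member of $\cC$ that is not in $\cC$ is forbidden from $\cD$. Second, intersection-closedness of $\cD$. Third, extendability (C3) in $\cD$: since $\cD$ has a global maximum $V\supsetneq M$, every inclusion-maximal member $M$ of $\cC$ (which is therefore not maximal in $\cD$) must admit an extension $M\cup\{e\}\in\cD\setminus\cC$. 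Feeding the forced extensions of the maximal members of $\cC$ into intersection-closedness forces further members of $\cD$, and checking these against the forbidden sets from downward-closedness yields the contradiction. I would phrase the endgame in whichever of two equivalent ways is cleaner for the concrete family: either as a concrete forbidden set that $\cD$ is forced both to contain and to avoid, or via the order-type characterisation of Edelman--Jamison — the forced extensions impose an inconsistent (cyclic) system of comparabilities on the elements of $U$ in every family of total orders generating $\cD$.

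The delicate step, and the main obstacle, is ruling out all convex geometries $\mathcal{G}$ — in particular those whose universe $V$ strictly contains $U$, since a priori one has a lot of freedom to extend the maximal members of $\cC$ using fresh points. The key observation that removes this freedom is that a fresh point introduces no new intersection with members of $\cC$, so it cannot resolve the obstruction on its own: in order for the saturated chain running from a maximal member $M$ of $\cC$ up to the global maximum $V$ to arrive at $V$, it must, element by element (the lattice of a convex geometry being graded), take on elements that lie in the other maximal members of $\cC$, and every such partial set meets those members in a set pinned down inside $\cC$ by downward-closedness; collecting the constraints coming from all the maximal members of $\cC$ then closes off every choice of $V$. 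Carrying this out for the specific family of Figure~\ref{fig:convsemigeo} gives the contradiction, proving that it is a bouquet of convex geometries that is not an ideal of a convex geometry.
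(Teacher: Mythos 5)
Your proposal is correct and takes essentially the same route as the paper: the same counterexample from Figure~\ref{fig:convsemigeo}, with the contradiction obtained by extending the maximal convex set $\{1,4\}$ via (C3), intersecting the resulting set with $\{2,3,4\}$ or $\{1,2,3\}$ via (C2) to force $\{2,4\}$ or $\{1,3\}$ into the ambient geometry, and then contradicting downward-closedness of the ideal. The only difference is minor: the paper first reduces to the case where the ambient universe equals $\{1,2,3,4\}$ by deleting extra elements, whereas you keep the ambient universe general and track the first element of $\{2,3\}$ added along a saturated chain above $\{1,4\}$ before intersecting; both variants work.
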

\begin{proof}
  Consider the bouquet of convex geometries in~\Cref{fig:convsemigeo},
  right, i.e., $U'=\{1,2,3,4\}$ and
  \[\mathcal{C}'=\{\varnothing, \{1\}, \{2\}, \{3\}, \{4\}, \{1,2\},
    \{1,4\}, \{2,3\}, \{3,4\}, \{1,2,3\}, \{2,3,4\}\}.\] Suppose that
  $\cA'=(U',\mathcal{C}')$ is an ideal of a convex geometry
  $\cA=(U,\cC)$. Starting with $\{1,4\}$ and applying iteratively
  (C3), there exists a set $Y \subseteq U \setminus U'$ such that one
  of the sets $Y \cup \{1,2,4\}$ or $Y \cup \{1,3,4\}$ must be in
  $\cC$. Now applying (C2) with respect to $Y \cup \{1,2,4\}$ and
  $\{2,3,4\}$ or $Y \cup \{1,3,4\}$ and $\{1,2,3\}$, we get that
  $\{2,4\}$ or $\{1,3\}$ are in $\cC$. Hence $\cA'=(E',\mathcal{C}')$
  is not an ideal of $\cA$.
\end{proof}




\section{Realizability}  In this section, we recall the notion of realizability of COMs and investigate some general properties of realizable COMs. We also compare the notion of realizability with the notion of generalized convex shelling, which can be seen as a dual realizability. 


\subsection{Realizability of COMs} 
We continue with the definition of realizable COMs given in the paper
\cite{BaChKn}, which generalizes realizability of oriented and affine
oriented matroids~\cite{BjLVStWhZi} and of ample sets~\cite{La}.  An
\emph{affine arrangement of hyperplanes} $\mathcal H$ is a finite set
of affine hyperplanes of ${\mathbb R}^d$.  We will denote by
${\mathcal H}_d$ the set of $d$ coordinate hyperplanes of
${\mathbb R}^d$. Let $K$ be a relatively open convex set of ${\mathbb
  R}^d$. Each (oriented) hyperplane $H=\{ x\in {\mathbb R}^d: ax=b\}$ in an
arrangement of hyperplanes $\mathcal H$ splits $\mathbb{R}^d$ into the positive part $H^+=\{ x\in {\mathbb R}^d: ax>b\}$, the
negative part $H^-=\{ x\in {\mathbb R}^d: ax<b\}$, and the zero part $H$. The arrangement $\mathcal H$
partitions ${\mathbb R}^d$ into relatively open convex regions, called
\emph{cells}. All points belonging to the same cell have the same sign
vector with respect to the hyperplanes of ${\mathcal H}$. Restrict the
arrangement pattern to $K$, that is, remove all sign vectors which
represent the regions of the partition disjoint from $K$. The resulting set of sign vectors of the cells of $K$ is denoted
$\covectors({\mathcal H},K)$ and constitutes a COM
$\cM({\mathcal H},K) = (H,\covectors({\mathcal H},K))$, see~\cite{BaChKn}.

If ${\mathcal H}$ is a central arrangement  with $K$ being any relatively open convex set containing the origin, then $\cM({\mathcal H},K)$ coincides with the notion of  \emph{realizable oriented matroid}~\cite{BjLVStWhZi}. If the  arrangement ${\mathcal H}$ is affine and $K$ is the entire space, then
$\cM({\mathcal H},K)$ coincides with the \emph{realizable affine oriented matroid}.
Finally, the \emph{realizable ample sets}  arise by taking  the central arrangement ${\mathcal H}_d$ of coordinate hyperplanes  restricted to
a full-dimensional open convex set $K$ of ${\mathbb R}^d$ (this model was first considered in \cite{La}). A COM $\cM$ is called \emph{realizable}  if there exists  an affine arrangement of hyperplanes $\mathcal H$ and a relatively open convex set $K$ of ${\mathbb R}^d$ such that $\cM=\cM({\mathcal H},K)$. Then the pair $(\cH,K)$ is called a \emph{realization} of $\cM$. 
As noticed in \cite{BaChKn}, for a realizable COM the set $K$ always can be selected to be a relatively open polyhedron. For a realizable COM $\cM$, we denote by $\edim(\cM)$ the smallest $d$ such that $\cM$ has a realization in $\mathbb{R}^d$ and call it the \emph{Euclidean dimension} of $\cM$. 

In realizable COMs, $X\le Y$ for two covectors $X,Y$  if and only if the cell corresponding to $Y$ is contained in the cell corresponding to $X$. Consequently, the topes of realizable COMs are the covectors of the inclusion maximal cells (which all have dimension $d$), called \emph{regions}. Therefore, the tope graph of a realizable COM with realization $(\cH,K)$  can be viewed as the adjacency graph of regions:
the vertices of this graph are the regions of $K$ defined by the hyperplane arrangement $\cH$ and two regions are
adjacent in this graph if they are separated by a unique hyperplane of the arrangement (recall that a hyperplane $H$ \emph{separates} two disjoint relatively open convex sets $A$ and $B$ if $A$ and $B$ belong to distinct halfspaces defined by $H$). 

To realize a COM $\cM$, it is necessary to find an arrangement of
hyperplanes $\mathcal H$, a relatively open convex set $K$, and a sign-preserving bijection
between the topes of $\cM$ and the sign vectors of the regions
(maximal cells) defined by $\mathcal H$ and included in $K$. Now, we
will show that $\mathcal H$ can be selected to be the set of coordinate
hyperplanes $\cH_d$ of ${\mathbb R}^d$ and $K$ to be a relatively open polyhedron
 of ${\mathbb R}^d$. This
immediately follows from the following lemma:

\begin{lemma}\label{cones}
Let $\mathcal{M}=(U,\covectors)$ be a realizable COM. Then $\mathcal{M}$ has a realization $(\cH'',K''),$ where $K''$ is a relatively open polyhedral cone, and $\cH''$ is a central arrangement of hyperplanes, whose normal vectors generate the ambient space.
\end{lemma}
\begin{proof}
Let $\cM = (U,\covectors)$ be a realizable COM with a realization $\cM=\cM({\mathcal H},K)$, where  ${\mathcal H}$ is an arrangement of affine hyperplanes and $K$ is a relatively open polyhedron of ${\mathbb R}^d$, as provided by~\cite{BaChKn}. 
First, we consider ${\mathbb R}^d$ as the hyperplane $x_{d+1}=1$ of ${\mathbb R}^{d+1}$. Second, we transform each hyperplane $H_e\in {\mathcal H}, e\in U$ to a central hyperplane $H'_e$ of ${\mathbb R}^{d+1}$  passing via $H_e$ and the origin of coordinates. Denote the resulting arrangement by ${\mathcal H}'$. The supporting hyperplanes $S_j$ of the closure of $K$ define a second arrangement of hyperplanes ${\mathcal S}$ of ${\mathbb R}^d$. 
Analogously to $\mathcal H$, the arrangement $\mathcal S$ can be transformed into an arrangement ${\mathcal S}'$ of central hyperplanes of  ${\mathbb R}^{d+1}$. Then  $K$ belongs to one halfspace defined by each hyperplane $S'_j\in {\mathcal S}'$. The intersection of those positive open halfspaces is an open polyhedron $K'$ containing $K$. Then  $\covectors=\covectors({\mathcal H}',K')$ holds. Each cell $C\subset K$ of the realization $\cM({\mathcal H},K)$ gives rise to an open polyhedral cone $C'\subset K'$ (containing $C$) of the realization $\cM({\mathcal H}',K')$. The cone $C'$ is the intersection of  positive or negative halfspaces defined by the hyperplanes of ${\mathcal H}'$ and by the halfspaces of the hyperplanes of ${\mathcal S}'$ containing $K'$. 

If the normal vectors of ${\mathcal H}'$ generate a proper subspace $W\subset\mathbb{R}^{d+1}$ of dimension $d'$, then we consider ${\mathcal H}'$ as an arrangement in $W$, intersect $K'$ with $W$, and affinely transform $W$ to $\mathbb{R}^{d'}$. This yields a realization $(\cH'',K'')$ of $\mathcal{M}$ with the desired properties.
\end{proof}

\begin{proposition}\label{orthants}
Let $\mathcal{M}=(U,\covectors)$ be a realizable COM. Then $\mathcal{M}$ has a realization $(\cH_n,K),$ where $K$ is a relatively open polyhedral cone, and $\cH_n$ is the set of the coordinates hyperplanes. 
\end{proposition}
\begin{proof} Let $(\cH'',K'')$ be a realization of $\cM$ in ${\mathbb R}^d$ provided by~\Cref{cones}. The vectors normal to the central hyperplanes of $\cH''$ generate ${\mathbb R}^d$. Now, we can extend $\cH''$ to a hyperplane arrangement in a larger ${\mathbb R}^{n}$, so that the normal vectors of $\cH'''$ are a basis of ${\mathbb R}^{n}$, while defining $K'''$ just as the restriction of $K''$ to ${\mathbb R}^d$. Now, there exists a linear transformation $f$  of  ${\mathbb R}^{n}$ mapping these vectors to the coordinate vectors. Then $f$ maps the hyperplanes of $\cH'''$ to the coordinate hyperplanes $\cH_n$ and the relatively open polyhedron $K'''$ to a relatively open polyhedron $K$. Furthermore, each relatively open cell $C'''$ of $K'''$ bounded by a set $\cH_0'''$ of hyperplanes of $\cH'''$ is mapped to a nonempty  cell $C$ of $K$ bounded by the set $\cH_0$ of hyperplanes of $\cH_n$, which are images of the hyperplanes of $\cH_0'''$.  Consequently, $(\cH_n,K)$ realizes $\cM$. 
\end{proof}

\subsection{General properties of realizability}\label{sec:general} 
We continue with some properties of realizable COMs and OMs. For this we continue with the notions of restriction,  contraction, and minors for COMs.
Let $\cM=(U,\covectors)$ be a COM and $A\subseteq U$. Given a sign vector $X\in\{\pm 1, 0\}^U$  by $X\setminus A$ we refer to the \emph{restriction}
of $X$ to $U\setminus A$, that is $X\setminus A\in\{\pm1, 0\}^{U\setminus A}$ with $(X\setminus A)_e=X_e$ for all $e\in U\setminus A$. Note that in the realizable setting $\cM(\cH,K)$ this operation corresponds to adding some hyperplanes from $\cH$ as halfspaces to $K$.
The \emph{deletion} of $A$ is defined as $(U\setminus A,\covectors\setminus A)$, where $\covectors\setminus A:=\{X\setminus A:  X\in\covectors\}$. Note that in the realizable setting $\cM(\cH,K)$ this operation corresponds to removing some hyperplanes from $\cH$.
The \emph{contraction} of $A$ is defined as $(U\setminus A,\covectors/ A)$, where
$\covectors/ A:=\{X\setminus A: X\in\covectors\text{ and }\underline{X}\cap A=\varnothing\}$. Note that in the realizable setting $\cM(\cH,K)$ this operation corresponds to intersecting $K$ with some hyperplanes from $\cH$. If $\covectors'$ arises by
deletions and contractions from $\covectors$, $\covectors'$ is said to be \emph{minor} of  $\covectors$. With the arguments about realizability together with~\cite{BaChKn} we get.

\begin{lemma}[\!\!{\cite[Lemma 1]{BaChKn}}]\label{lem:minorclosed}
  The class of realizable COMs is closed under taking minors and
  restrictions. Moreover, if $\cM'$ is obtained by such operations
  from $\cM$, then $\edim(\cM')\leq\edim(\cM)$.
\end{lemma}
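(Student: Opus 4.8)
The plan is to verify, operation by operation, that the geometric dictionary recorded just before the statement really turns a realization of $\cM$ into a realization of the corresponding minor or restriction, without ever increasing the ambient dimension. Since every minor is a composition of single-element deletions and single-element contractions, and a restriction is a composition of ``halfspace cuts'', it suffices to treat these three elementary operations. Fix a realization $\cM=\cM(\cH,K)$ in $\mathbb{R}^d$ with $\cH=\{H_e:e\in U\}$ and $K$ an open polyhedron, as provided by \Cref{cones}. In each case I produce a new pair $(\cH',K')$ which is again of the form $\cM(\cH',K')$, hence automatically a COM; so the content is only that (i) its covector system equals the one prescribed by the operation, and (ii) its ambient dimension is at most $d$. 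The elementary fact used throughout is that the cells of a subarrangement $\cH''\subseteq\cH$ are unions of cells of $\cH$, that all cells of $\cH$ contained in a fixed cell $F'$ of $\cH''$ have the same restriction of their sign vector to the ground set of $\cH''$, and that $F'$ meets an open convex set iff some cell of $\cH$ inside $F'$ meets it.

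\emph{Deletion of $e$.} Keep $K$ and pass to $\cH'=\cH\setminus\{H_e\}$ in the same $\mathbb{R}^d$; by the elementary fact, $\covectors(\cH',K)=\{X\setminus e:X\in\covectors\}=\covectors\setminus e$, realized in $\mathbb{R}^d$. \emph{Restriction along $e$.} Again drop $H_e$, but now replace $K$ by the open polyhedron $K'=K\cap H_e^+$ of $\mathbb{R}^d$ (symmetrically for $H_e^-$). A cell $F'$ of $\cH\setminus\{H_e\}$ meets $K'$ iff it contains a cell $F$ of $\cH$ lying in $H_e^+$ with $F\cap K\ne\varnothing$: indeed $F'\cap K$ is open and convex, so once it meets $H_e^+$ it already meets the cell of $\cH$ inside $F'$ on the positive side of $H_e$. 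Hence $\covectors(\cH\setminus\{H_e\},K\cap H_e^+)=\{X\setminus e:X\in\covectors,\ X_e=+1\}$, the desired restriction, again realized in $\mathbb{R}^d$.

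\emph{Contraction of $e$.} If $K\cap H_e=\varnothing$ the contraction is empty; otherwise set $K'=K\cap H_e$, an open polyhedron inside the affine hyperplane $H_e\cong\mathbb{R}^{d-1}$, and $\cH'=\{H_f\cap H_e:f\in U\setminus e\}$. Two points of $H_e$ lie in the same cell of $\cH$ iff they have the same sign with respect to every $H_f$ with $f\ne e$, i.e.\ iff they lie in the same cell of $\cH'$; therefore the cells of $\cH$ contained in $H_e$ --- equivalently, the covectors $X$ with $X_e=0$ --- are exactly the cells of $\cH'$, with sign vectors obtained by restriction to $U\setminus e$, and such a cell meets $K'$ iff it meets $K$. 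So $\covectors(\cH',K')=\{X\setminus e:X\in\covectors,\ X_e=0\}=\covectors/e$, realized in $\mathbb{R}^{d-1}$. (Some $H_f\cap H_e$ may be empty or may coincide; the resulting loops and parallel elements of $\covectors/e$ are removed by a further deletion and by merging, which by the previous paragraph do not raise the dimension.) In particular $\edim(\covectors/e)\le d-1<d$.

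Composing these elementary steps gives closure of realizable COMs under minors and restrictions together with $\edim(\cM')\le\edim(\cM)$; closure of the class of abstract COMs under the same operations is \cite[Lemma~1]{BaChKn}, so strictly speaking only the dimension bound needs the argument above. The main obstacle here is not the constructions but the bookkeeping: one must check carefully that passing to a subarrangement, to an open halfspace, or to a hyperplane section neither creates nor destroys covectors --- this is exactly where openness and convexity of $K$ enter --- and that after a contraction simplicity can be restored (delete loops, merge parallel elements) without affecting the covector system on the remaining ground set or the ambient dimension.
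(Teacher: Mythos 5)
Your proof is correct and is essentially an expansion of the argument the paper only sketches: the paper offers no proof beyond citing \cite[Lemma 1]{BaChKn} together with the three in-line remarks identifying deletion, contraction, and restriction with removing a hyperplane, intersecting $K$ with a hyperplane, and cutting $K$ by a halfspace, which is exactly the dictionary you verify operation by operation (including the minor repair of loops/parallel elements after contraction). One cosmetic point: you do not need $K$ to be polyhedral, and invoking \Cref{cones} is slightly off since that lemma passes to $\mathbb{R}^{d+1}$; an arbitrary open convex $K$ suffices for all three steps, so the dimension bound is unaffected.
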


A COM $\cM=(U,\covectors)$ is called \emph{free} of dimension $d$ if its tope graph is isomorphic to $\Q_d$. A COM $\cM$ has VC-dimension $d$ if the topes of $\cM$ define a set-family of VC-dimension $d$. 

\begin{lemma}[\!\!{\cite[Lemma 1]{ChKnPh}}]\label{VCdim_d}
  A COM has VC-dimension $\le d$ if and only if it does not have a
  free COM of dimension $d+1$ as a minor.
\end{lemma}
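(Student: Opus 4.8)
The plan is to prove the contrapositive in both directions, i.e.\ that $\cM$ has a free COM of dimension $d+1$ as a minor if and only if $\vcd(\cM)\ge d+1$. Everything hinges on one elementary observation about topes under the minor operations, which I would set up first. In a COM composition is an available operation: applying the face-symmetry axiom (FS) twice — first to $X,T_0$ to get $X\circ -T_0\in\covectors$, then to $X,X\circ -T_0$ — yields $X\circ T_0\in\covectors$ for all $X,T_0\in\covectors$. Consequently, fixing any tope $T_0$ of $\cM=(U,\covectors)$, for every covector $X\in\covectors$ the composition $X\circ T_0$ lies in $\covectors$, has support $\underline X\cup\underline{T_0}=U$, hence is a $\{-1,+1\}$-vector and therefore a tope of $\cM$, and it agrees with $X$ on $\underline X$. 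In short, any covector can be ``completed'' to a tope without changing its values on its own support.

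Using this I would compute the topes of a minor. Let $\cM'$ be a minor of $\cM$; after deleting any inactive elements we may assume $\cM'$ is simple, and write $\cM'=\cM/C\setminus D$ with ground set $Y:=U\setminus(C\cup D)$. Since COMs are closed under deletions and contractions \cite{BaChKn}, $\cM'$ is again a COM. Unwinding the definitions, the covectors of $\cM'$ are exactly the restrictions $X|_Y$ with $X\in\covectors$ and $\underline X\cap C=\varnothing$; hence a tope of $\cM'$ is a $\{-1,+1\}$-vector $s$ on $Y$ of the form $X|_Y$ with $\underline X\cap C=\varnothing$ and, since $s$ is full-support on $Y$, with $\underline X\supseteq Y$. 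The completion $X\mapsto X\circ T_0$ then shows that every tope of $\cM'$ is the restriction to $Y$ of a tope of $\cM$, so $\topes(\cM')\subseteq\topes(\cM)|_Y$; and in the special case $C=\varnothing$, $D=U\setminus Y$, conversely every restriction of a tope of $\cM$ to $Y$ is a tope of $\cM\setminus(U\setminus Y)$, so there $\topes(\cM\setminus(U\setminus Y))=\topes(\cM)|_Y$.

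The two implications now follow quickly. If $\vcd(\cM)\ge d+1$, pick $Y\subseteq U$ with $|Y|=d+1$ that is shattered by $\topes(\cM)$, i.e.\ $\topes(\cM)|_Y=2^Y$; then by the computation above $\cM\setminus(U\setminus Y)$ has tope set exactly $2^Y$, so its tope graph is $\Q_{d+1}$ and it is a minor of $\cM$ that is free of dimension $d+1$. Conversely, suppose $\cM$ has a minor $\cM'$ that is free of dimension $d+1$; as above we may take $\cM'=\cM/C\setminus D$ simple with ground set $Y$, so $|Y|=d+1$ and $\topes(\cM')=2^Y$. For each sign pattern $s\in\{-1,+1\}^Y$ there is then $X\in\covectors$ with $\underline X\cap C=\varnothing$, $\underline X\supseteq Y$ and $X|_Y=s$, whence $X\circ T_0$ is a tope of $\cM$ with $(X\circ T_0)|_Y=s$. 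Thus $Y$ is shattered by $\topes(\cM)$, so $\vcd(\cM)\ge d+1$.

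The genuinely content-bearing step — the one I would write out most carefully — is the description of the topes of a minor, and in particular the completion trick $X\mapsto X\circ T_0$ when a contraction is involved: one must check that a covector realizing a prescribed tope of $\cM/C\setminus D$ can be chosen to vanish on all of $C$ (so that it survives the contraction) while remaining nonzero on every element of $Y$ (so that its restriction is a genuine tope and is unaffected by composing with $T_0$). The remaining ingredients — closure of COMs under composition and under minors — are standard and quotable from \cite{BaChKn}.
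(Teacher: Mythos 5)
The paper does not prove this lemma; it is quoted verbatim from \cite{ChKnPh}, so there is no in-text argument to compare yours against. Your proof is essentially correct and is the natural one: derive composition from (FS), use $X\mapsto X\circ T_0$ to complete covectors to topes, compute the tope set of a minor, and translate ``free minor of dimension $d+1$'' into ``shattered $(d{+}1)$-set'' in both directions. The key step you single out --- that a prescribed tope of $\cM/C\setminus D$ comes from a covector vanishing on $C$ and fully supported on $Y$, whose completion to a tope of $\cM$ does not disturb its values on $Y$ --- is exactly right and is what makes the lifting direction work.

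One small imprecision: in the converse direction you assert that after making the free minor $\cM'$ simple its ground set $Y$ has size exactly $d+1$, so that $\topes(\cM')=\{-1,+1\}^Y$. Simplicity in the paper's sense (every element is nonzero in some covector) does not force this; e.g.\ $\covectors=\{(+,+),(-,+),(0,+)\}$ is simple, free of dimension $1$, but lives on two elements. This is harmless: since tope graphs of COMs are isometric subgraphs of the cube, a tope graph isomorphic to $\Q_{d+1}$ is a subcube, i.e.\ there is $Y'\subseteq Y$ with $|Y'|=d+1$ on which the topes of $\cM'$ realize all sign patterns (and on $Y\setminus Y'$ they are constant); running your lifting argument on $Y'$ instead of $Y$ (or deleting $Y\setminus Y'$ first) closes the gap. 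With that one-line repair the proof is complete.
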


\begin{lemma}\label{dimfree}
    The Euclidean dimension of a free COM of dimension $d$ is at least $d$.
\end{lemma}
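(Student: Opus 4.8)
The plan is to realize a free COM of dimension $d$ concretely and then argue that no realization can live in fewer than $d$ dimensions. A free COM of dimension $d$ has tope graph $\Q_d$, so its system of covectors is all of $\{-1,0,+1\}^U$ with $|U|=d$; in particular it has $d$ elements, $2^d$ topes, and its big face lattice is the face lattice of the $d$-cube. Suppose for contradiction it has a realization $(\cH,K)$ in $\mathbb{R}^{d'}$ with $d'<d$. By \Cref{cones} and \Cref{orthants} we may assume $\cH=\cH_{d'}$ is the arrangement of $d'$ coordinate hyperplanes and $K$ is an open polyhedral cone, and the $d$ elements of $U$ are in sign-preserving bijection with the cells of $K$.

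First I would count regions. Since the tope graph is $\Q_d$, the realization must have exactly $2^d$ regions, all of full dimension $d'$, and the adjacency graph of these regions must be isomorphic to $\Q_d$. But $d'$ coordinate hyperplanes partition $\mathbb{R}^{d'}$ into at most $2^{d'}$ orthants, hence $K$ meets at most $2^{d'}<2^d$ of them — contradiction. Actually this already finishes it, but let me double check the setup: in a realization of a COM by $\cH$ and $K$, each region of $K$ lies in a distinct cell of the arrangement $\cH$ (two regions in the same cell would have the same sign vector, contradicting the bijection with topes), and each cell of $\cH_{d'}$ is (a subset of) one of the $2^{d'}$ open orthants of $\mathbb{R}^{d'}$. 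So the number of topes is at most $2^{d'}$. If $d'<d$ then $2^{d'}<2^d$, contradicting that the free COM of dimension $d$ has $2^d$ topes. Hence $d'\ge d$, i.e. $\edim\ge d$.

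Alternatively, and perhaps more in the spirit of the surrounding lemmas, one can phrase this via \Cref{lem:minorclosed}: the free COM of dimension $d$ is its own minor, and one shows directly that any realizable COM in $\mathbb{R}^{d'}$ has at most $2^{d'}$ topes by the orthant-counting argument above; combined with the fact that the free COM of dimension $d$ has $2^d$ topes, this yields $\edim\ge d$. Either way the engine is the same elementary observation.

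The only subtlety — and the one point worth spelling out carefully — is the claim that distinct regions of $K$ receive distinct sign vectors and that a sign vector over $d'$ coordinates can take at most $2^{d'}$ values among full-dimensional cells (no coordinate can be $0$ on a full-dimensional region, so topes are genuine $\{-1,+1\}$-vectors of length $d'$). Both facts are immediate from the definition of realizability recalled just before \Cref{cones}: topes correspond to regions, regions are full-dimensional cells, and all points of a cell share one sign vector. So the proof is short; I would simply write out the orthant count.

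\begin{proof}
Let $\cM$ be a free COM of dimension $d$; its tope graph is $\Q_d$, so $\cM$ has exactly $2^d$ topes. Suppose $\cM$ is realizable in $\mathbb{R}^{d'}$. By \Cref{orthants}, $\cM$ has a realization $(\cH_{d'},K)$ where $\cH_{d'}$ is the arrangement of the $d'$ coordinate hyperplanes of $\mathbb{R}^{d'}$ and $K$ is an open polyhedron. The topes of $\cM$ are in sign-preserving bijection with the regions of $K$, that is, with the full-dimensional cells of $\cH_{d'}$ meeting $K$. Every full-dimensional cell of $\cH_{d'}$ is contained in one of the $2^{d'}$ open orthants of $\mathbb{R}^{d'}$, and all points in such a cell have the same sign vector, which is a $\{-1,+1\}$-vector of length $d'$. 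Distinct regions of $K$ have distinct sign vectors, so $\cM$ has at most $2^{d'}$ topes. Therefore $2^d\le 2^{d'}$, i.e.\ $d'\ge d$, which proves $\edim(\cM)\ge d$.
\end{proof}
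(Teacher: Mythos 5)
Your underlying idea --- bound the number of topes by the number of full-dimensional cells of the arrangement --- is exactly the engine of the paper's proof, but your implementation has a genuine error. A realization $(\cH,K)$ of a COM on ground set $U$ has one hyperplane per element of $U$, so for the free COM of dimension $d$ (which has $|U|=d$) the arrangement $\cH$ consists of $d$ hyperplanes, regardless of the ambient dimension $d'$. You cannot ``assume $\cH=\cH_{d'}$ is the arrangement of the $d'$ coordinate hyperplanes of $\mathbb{R}^{d'}$'': when $d'<d$ there are only $d'$ coordinate hyperplanes available, too few to carry the $d$ coordinates of the sign vectors. \Cref{orthants} does not give you this either --- it converts a realization into one by coordinate hyperplanes in $\mathbb{R}^{|U|}=\mathbb{R}^{d}$ (its proof needs the $|U|$ normal vectors to be independent), so invoking it \emph{increases} the ambient dimension to $d$ and destroys the hypothesis $d'<d$ you are trying to contradict. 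Relatedly, your bound of $2^{d'}$ cells is specific to the coordinate arrangement; a general arrangement of $d$ hyperplanes in $\mathbb{R}^{d'}$ can have far more than $2^{d'}$ regions (three lines in the plane already give seven).

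The correct version of the count, and the one the paper uses, is: an arrangement of $d$ hyperplanes in $\mathbb{R}^{d-1}$ has at most $\sum_{i=0}^{d-1}\binom{d}{i}=2^{d}-1<2^{d}$ maximal cells (cited as~\cite[Exercise 4.3.5]{BjLVStWhZi}), so no convex set $K$ can meet $2^{d}$ of them, and the free COM of dimension $d$, having $2^{d}$ topes, admits no realization in $\mathbb{R}^{d-1}$ (hence none in any lower dimension, by intersecting with a generic subspace or by the same count). If you replace your orthant count with this classical cell-count bound for $d$ hyperplanes in $\mathbb{R}^{d'}$, your proof goes through and coincides with the paper's.
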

\begin{proof}
     A hyperplane arrangement of $d$ hyperplanes in $\mathbb{R}^{d-1}$ has less than $2^{d}$ maximal cells, see e.g.,~\cite[Exercise 4.3.5]{BjLVStWhZi}. Thus, the free COM  (which has $2^d$ topes) cannot be realized in $\mathbb{R}^{d-1}$. 
\end{proof}



The previous lemmas together imply:

\begin{theorem}\label{generalbounds}
For every realizable COM $\mathcal{M}$, we have $\VCdim(\mathcal{M})\leq \edim(\mathcal{M})\leq |U|$.
\end{theorem}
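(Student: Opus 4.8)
The plan is to prove the two inequalities separately, each by assembling results already established in the excerpt.

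For the upper bound $\edim(\mathcal{M}) \leq |U|$, the idea is simply that every realizable COM can be realized using the coordinate hyperplane arrangement $\cH_n$ in $\mathbb{R}^n$ with $n = |U|$. This is precisely the content of \Cref{orthants}: given any realization of $\mathcal{M}$, \Cref{orthants} produces a realization $(\cH_n, K'')$ with $K''$ an open polyhedral cone. Since $\cH_n$ is an arrangement in $\mathbb{R}^n$, this witnesses $\edim(\mathcal{M}) \leq n = |U|$ directly.

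For the lower bound $\VCdim(\mathcal{M}) \leq \edim(\mathcal{M})$, set $d = \VCdim(\mathcal{M})$. By \Cref{VCdim_d}, since $\mathcal{M}$ has VC-dimension $\geq d$, it must contain a free COM of dimension $d$ as a minor; call it $\mathcal{M}'$. Because $\mathcal{M}$ is realizable, \Cref{lem:minorclosed} tells us that $\mathcal{M}'$ is also realizable and that $\edim(\mathcal{M}') \leq \edim(\mathcal{M})$. On the other hand, \Cref{dimfree} says the Euclidean dimension of a free COM of dimension $d$ is at least $d$, so $d \leq \edim(\mathcal{M}')$. Chaining these gives $d = \VCdim(\mathcal{M}) \leq \edim(\mathcal{M}') \leq \edim(\mathcal{M})$, as desired.

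Since the statement is an immediate corollary of the preceding four lemmas, there is no real obstacle here; the only thing to be careful about is the direction of \Cref{VCdim_d} — it is stated as ``VC-dimension $\le d$ iff no free minor of dimension $d+1$,'' so to extract a free minor of dimension exactly $d$ from VC-dimension $\geq d$ one applies the contrapositive with the parameter shifted by one. The rest is bookkeeping.
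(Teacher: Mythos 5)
Your proposal is correct and follows essentially the same route as the paper: the upper bound is exactly \Cref{orthants}, and the lower bound chains \Cref{VCdim_d}, \Cref{lem:minorclosed}, and \Cref{dimfree} just as the paper does (the paper phrases the free minor as $\Q_r$ obtained by deleting hyperplanes). Your explicit handling of the off-by-one in \Cref{VCdim_d} is a small clarification over the paper's wording, but the argument is the same.
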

\begin{proof}
For the lower bound, if $\mathcal{M}$ has $\VCdim(\mathcal{M})=r$, then by~\Cref{VCdim_d} we can remove hyperplanes from its realization and obtain a realization of $\Q_r$, which is an OM of VC-dimension $r$ by~\Cref{dimfree}, hence by~\Cref{lem:minorclosed} it has  $r\leq \edim(\Q_r)\leq \edim(\mathcal{M})$. The upper bound is~\Cref{orthants}.
\end{proof}

As a side remark we provide a quick answer to a question communicated to us privately by Kunin, Lienkaemper, and Rosen, i.e., that a non-realizable OM cannot be realized as a COM either:
\begin{remark}
    If $\mathcal{M}$ is an OM realizable as a COM, then $\mathcal{M}$ is realizable by a central hyperplane arrangement.
\end{remark}
\begin{proof}
Let $\mathcal{M}=(U,\covectors)$ be an OM and suppose that it can be realized as a (realizable) COM $\mathcal{M}=\mathcal{M}(\cH,K)$ in $\mathbb{R}^d$. Suppose without loss of generality that $K$ is full-dimensional, otherwise we project into the affine hull of $K$. Since $\mathcal{M}$ is an OM, we have that  $\mathbf{0}\in \covectors$ (recall that $\mathbf{0}$ is the  all-zeros vector). Hence, there is a point $x\in K$ in which all the hyperplanes of $\cH$ intersect. After translating $x$ to the origin of $\mathbb{R}^d$ we see that $\cH$ is a central hyperplane arrangement. Hence, its combinatorics is determined by any arbitrary small sphere $S$ around $x$, see e.g.~\cite[Section 1.2.(c)]{BjLVStWhZi}. Therefore, we can set $K=\mathbb{R}^d$ and get a a representation of $\cM$ by a central hyperplane arrangement.
\end{proof}





\subsection{Generalized convex shellings} Convex geometries are
considered as generalizations of Euclidean convexity in the following
sense. Let $\cA = (U,\cC)$ be a convex geometry. We say that $\cA$ is
\emph{CG-realizable} if $U$ can be viewed as a finite subset of points
of ${\mathbb R}^d$ such that $X\subseteq U$ belongs to $\cC$ if and
only if the Euclidean convex hull of $X$ does not contain other points
of $U$: $\conv_{{\mathbb R}^d}(X)\cap U=X$. Not every convex geometry
can be realized in this way. Kashiwabara et al. \cite{KaNaOk} gave a
realization theorem for all convex geometries using the notion of
generalized convex shellings. Let $P$ and $Q$ be two finite sets of
${\mathbb R}^d$ such that the Euclidean convex hull of $Q$ does not
intersect $P$. Then the family of subsets
$\mathcal{C}(P,Q)=\{ X\subseteq P: \conv(X\cup Q)\cap P=X\}$ of $P$ is
called the \emph{generalized convex shelling on} $P$ \emph{with
  respect to}
$Q$.  
It is easy to see that any generalized convex shelling is a convex
geometry. Conversely, Kashiwabara et al. \cite{KaNaOk} proved that any
convex geometry arises as a generalized convex shelling. If
$Q=\varnothing$, then this leads to the notion of CG-realizability. If
${\cC}=\mathcal{C}(P,Q)$, then we will say that $\mathcal{C}(P,Q)$ is
a \emph{generalized convex shelling} of $\cA=(P,\cC)$. Richter and
Rogers \cite{RiRo} proved that any convex geometry of convex
dimension $d$ has a generalized convex shelling in ${\mathbb R}^d$.


Extending the notation for generalized convex shellings, given two finite point sets $P$ and $Q$ of ${\mathbb R}^d$,  let $$\cM(P,Q)=\{ X\subseteq P: \exists \mbox{ halfspace } H^+ \mbox{ s.t. } Q\subset H^+ \mbox{ and } P\cap H^+=X\}.$$ 
 
We say that $(P,Q)$ is a \emph{point representation} of a COM $\cM$ if $\M=\cM(P,Q)$, i.e., the set $\mathcal{T}$ of topes of $\cM$ consists  precisely of the $\{-1,+1\}$-vectors of all sets of $\cM(P,Q)$ as in~\Cref{setsystem}. Trivially, $P\in \cM(P,Q)$. Now, we establish a correspondence between (hyperplane) 
realizations and point realizations of COMs and a link with generalized convex shellings. For this purpose we define a COM to be \emph{acyclic} if it has the all plus-vector $(+1,\ldots,+1)$ as a tope. 



\begin{proposition}[Point representations of COMs] \label{point-config} An acyclic COM $\mathcal{M}=(U,\covectors)$ is (hyperplane) realizable if and only if $\cM$ is point realizable. 
\end{proposition}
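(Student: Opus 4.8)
The plan is to exhibit a dictionary between a hyperplane realization $(\cH_n,K)$ of $\cM$ with $K$ an open convex set (which we may assume by~\Cref{orthants}, taking $\cH=\cH_n$ the coordinate hyperplanes) and a point representation $(P,Q)$, exploiting point--hyperplane duality. The key observation is that a coordinate hyperplane arrangement $\cH_n$ in $\R^n$ restricted to a convex set $K$ records, for each region, a sign vector in $\{-1,+1\}^n$; dually, for a point configuration $P=\{p_1,\dots,p_n\}\subseteq\R^{d}$ (lifted to $\R^{d+1}$ to make the separating ``halfspace'' homogeneous) together with $Q$, a linear functional $\ell$ that is positive on $Q$ partitions $P$ according to the sign of $\ell(p_i)$, and the attainable sign patterns are exactly $\cM(P,Q)$. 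So the two notions are the same up to passing between a point $x\in\R^d$ (a functional, i.e.\ a point in the dual) and the sign vector $(\mathrm{sign}\,x_1,\dots,\mathrm{sign}\,x_n)$ it induces on the $n$ coordinate functionals.

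First I would prove the forward direction. Assume $\cM=\cM(\cH_n,K)$ with $K$ an open convex set of $\R^n$; since $\cM$ is acyclic, $(+1,\dots,+1)$ is a tope, so $K$ meets the open positive orthant, and after a translation we may assume $K$ contains a point with all coordinates positive, or even shift so the relevant geometry is convenient. Think of each coordinate $x_e$ as the value $\langle a_e, x\rangle$ of the $e$-th standard functional. Dualize: let $P=\{a_1,\dots,a_n\}$ be the standard basis vectors viewed as points in $\R^n$, and encode membership ``$x\in K$'' by choosing $Q$ to be a finite point set whose positive hull (or a witnessing halfspace through the origin) carves out exactly the cone/convex region dual to $K$. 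A point $x\in\R^n$ lies in region with tope $T$ iff $\langle a_e,x\rangle$ has sign $T_e$ for all $e$, i.e.\ iff the hyperplane $\{y:\langle x,y\rangle=0\}$ (oriented by $x$) separates $P$ into exactly the set $\{a_e:T_e=+1\}$ on its positive side. The constraint $x\in K$ translates, via the supporting halfspaces of $K$ as in the proof of~\Cref{cones}, into $x$ lying in a cone, which is exactly the condition ``$Q$ lies in the positive halfspace $\{y:\langle x,y\rangle\ge 0\}$'' for a suitable finite generating set $Q$ of the dual cone (adding the origin or a lifting coordinate to handle the strict/affine bookkeeping). This gives $\cM=\cM(P,Q)$.

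For the converse, run the same dictionary backwards: given $(P,Q)$ with $\conv(Q)$ disjoint from $P$ (or the appropriate separation hypothesis), lift to $\R^{d+1}$ so that a separating halfspace $H^+\supseteq Q$ with $P\cap H^+=X$ corresponds to a linear functional $x$ with $x$ positive on the lift of $Q$ and $\mathrm{sign}\langle x, p\rangle$ prescribed on $P$; then $\cH=\{$hyperplanes $\langle p,\cdot\rangle=0 : p\in P\}$ together with $K=\{x:\langle q,x\rangle>0\ \forall q\in Q\}$ (intersected with whatever lifting hyperplane is needed) realizes $\cM$ as $\cM(\cH,K)$, and a linear change of coordinates turns $\cH$ into $\cH_n$ as in~\Cref{orthants}. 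The main obstacle I anticipate is the careful bookkeeping at the boundary between the affine and the homogeneous pictures: ``halfspace'' in the definition of $\cM(P,Q)$ is affine while coordinate hyperplanes are central, so one must correctly introduce the extra lifting coordinate (as in the proof of~\Cref{cones}) and track which inequalities are strict, and one must make sure that the condition $\conv(Q)\cap P=\varnothing$ is exactly what is needed so that every tope of $\cM$ (in particular the all-$+$ tope, using acyclicity) is hit and no spurious sign patterns appear. The convexity/polyhedrality of $K$ from~\Cref{cones} and~\Cref{orthants} is what guarantees $Q$ can be taken finite.
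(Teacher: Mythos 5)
Your plan is correct and follows essentially the same route as the paper: homogenize via the conification of \Cref{cones}, dualize the arrangement's normal vectors to the point set $P$ and the facet normals of $K$ to $Q$, identify points of cells with separating hyperplanes, and use acyclicity to cut all these rays by a common affine hyperplane so as to land back in an affine point configuration. The converse is likewise obtained by reversing the dictionary, just as in the paper.
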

\begin{proof}
Let $\mathcal{M}=(U,\covectors)$ be an acyclic realizable COM. We have $\mathcal{M}=\mathcal{M}(\cH',K')$ where ${\mathcal H}'$ is a central hyperplane arrangement ${\mathcal H}'$ and $K'$ an open polyhedral cone in $\mathbb{R}^{d+1}$ as in \Cref{cones}. Now, the vectors $v_e$, normal to the hyperplanes $H'_e, e\in U$ of the arrangement ${\mathcal H}'$, gives rise to a set ${\mathcal V}$ of vectors of ${\mathbb R}^{d+1}$. Analogously, the vectors $u_j$, normal to the hyperplanes $S'_j$ of ${\mathcal S}'$ defining the polyhedron $K'$ and such that their product with the vectors with ends inside $K'$ is positive
, gives rise to a set ${\mathcal U}$ of vectors of ${\mathbb R}^{d+1}$. For a cell $C$ of the initial realization, each point $p$ of $C$ defines a vector  $v_p$ belonging to the cone $C'$. Let $H_{v_p}$ be the central hyperplane of ${\mathbb R}^{d+1}$ having $v_p$ as the normal vector. Then all vectors of ${\mathcal U}$ belong to a halfspace defined by $H_{v_p}$. Analogously, each vector $v_e\in {\mathcal V}$ belongs to the positive side of the hyperplane $H_{v_p}$ if and only if the cell $C$ is located on the positive side of the hyperplane $H_e\in {\mathcal H}$ (and thus the cone $C'$ is on the positive side of the central hyperplane $H'_e\in {\mathcal H}$). 
Therefore, the topes of $\cM$ can be identified (in the set-theoretical language) with the subsets $X$ of $U$ such that there exists a central hyperplane $H'$ of  ${\mathbb R}^{d+1}$ such that all $U$ belong to one halfspace defined by $H'$ and a vector $v_e, e\in U$ belongs to the same halfspace defined by $H'$ if and only if $e\in X$. 

Now, we can return back to ${\mathbb R}^d$ by considering a hyperplane $H_0$ of ${\mathbb R}^{d+1}$ parallel to ${\mathbb R}^d$, which intersects all vectors of ${\mathcal V}\cup  {\mathcal U}$. This hyperplane exists, since by acyclicity of $\cM$ all elements of ${\mathcal V}$ have a positive 
last coordinate and the same holds for the elements of ${\mathcal U}$ by the choice of normal vectors for the defining hyperplanes of $K'$. Let $p_e\in H_0$ denote the point defined by the vector $v_e\in {\mathcal V}$ and let $q_j\in H_0$ denote the point defined by the vector $u_j\in {\mathcal U}$. Denote by $P$ and $Q$ the resulting point configurations of ${\mathbb R}^d$. The correspondence between points in the model $(\cH',K')$ and hyperplanes in the model $(P,Q)$ establishes that for any point $p$ in a cell $C$ of $(\cH',K')$, and thus for any tope $X$ of $\cM(\cH',K')$, there is a hyperplane $H_p$ such that $H_p$ induces the sign-pattern of $X$ in the model $(P,Q)$.

Conversely, the preceding transformation can be reversed. Namely, given sets $P,Q$ in $H_0\subseteq {\mathbb R}^{d+1}$, we can define the hyperplane arrangements $\cH$ and $\mathcal{S}$ having $P$ and $Q$ as normal vectors, respectively. Letting $K$ be the polyhedral cone defined by $\mathcal{S}$, we get a realization $\cM=\cM(\cH,K)$.
\end{proof}




\begin{remark} \label{point-rep-conv-shell}
  The essential difference between point representation of~\Cref{point-config} provided by ${\mathcal M}(P,Q)$ and generalized convex shelling of \cite{KaNaOk} provided by ${\mathcal C}(P,Q)$ is that instead of a single halfspace $H^{+}$ containing $Q$ and intersecting $P$ in $X$, the generalized convex shelling requires that $\conv(X\cup Q)\cap P=X$. This is equivalent to the existence of a set of halfspaces $H^+_1,\ldots,H^+_k$, all 
  containing $Q$ and such that $X=P\cap (\cap_{i=1}^k H_i^+)$. 
  (As a set of halfspaces $H^+_1,\ldots,H^+_k$ one can take the halfspaces defined by the support hyperplanes of the facets of the polytope $\conv(X\cup Q)$ and containing this polytope).  This is a natural requirement for intersection-closed set families, because any set family admitting a 
generalized convex shelling is closed by intersections: if $H_1^+,\ldots,H^+_k$ and $S^+_1,\ldots,S^+_m$ are two sets of halfspaces such that 
$Q\subset (\cap_{i=1}^k H^+_i)\cap (\cap_{j=1}^m S^+_j)$ and $P\cap (\cap_{i=1}^k H^+_i)=X$, $P\cap (\cap_{i=1}^{m} S^+_j)=Y$, then 
$P\cap (\cap_{i=1}^k H^+_i)\cap  (\cap_{i=1}^{m} S^+_j)=X\cap Y$. 

Therefore, the hyperplane realizations and point representations of COMs are much simpler than generalized convex shellings and correspond to classical OM-realizations. 
The main results of our paper (\Cref{realizations-convex-geometries} and \Cref{th-convgeom-cdim}) must
be considered from this point of view, while compared to the results
of \cite{KaNaOk} and \cite{RiRo} in the case of convex geometries. However, in contrast to our results generalized convex shellings characterize convex
geometries \cite{KaNaOk}.
\end{remark}

In the following we establish a connection between point representations and generalized convex shellings.
A COM $\cM$ is called \emph{intersection-closed} if the set of topes of $\M$ defines an intersection-closed family of sets. Note that by simplicity this implies that $(-1, \ldots, -1)$ is a tope of $\cM$. Obviously, convex geometries are acyclic and their ideals are intersection closed COMs. Acyclicity corresponds to the universe being convex, i.e., $(+1, \ldots, +1)$ is a tope of $\cM$.

\begin{proposition}\label{pointshelling}
Let $P,Q \subset \mathbb{R}^d$ be finite. If $\cM=\cM(P,Q)$ is a simple acyclic intersection-closed COM, then $\cC(P,Q)$ is a generalized convex shelling. In particular, $\cM$ is a convex geometry.
\end{proposition}
\begin{proof} 
Since $\cM=\cM(P,Q)$ contains the tope $(-1,\ldots,-1)$, necessarily  $\conv(Q)\cap P=\varnothing$. From the definition of the point representations and generalized convex shellings, we conclude that 
$\cM\subseteq \cC(P,Q)$. 
To prove the converse inclusion  $\cC(P,Q)\subseteq \cM$, pick any set $X$ from $\cC(P,Q)$. Then there exists a set $H^+_1,\ldots,H^+_k$ of halfspaces, all  containing $Q$ and such that $X=P\cap (\cap H_i^+)$. Let $X_i=P\cap H_i^+, i=1,\ldots,k$. Then each 
$X_i$ belongs to $\cM(P,Q)$. Since $\cM(P,Q)=\cM$, each $X_i$ belongs to $\cM$. Since $\cM$ is 
intersection-closed and $\cap_{i=1}^kX_i=X$, we conclude that $X\in \cM$, establishing that $\cC(P,Q)\subseteq \cM$. 
\end{proof}




\section{Convex geometries and their ideals} 
In this section, we present further results about convex geometries and their ideals that will be used in our proofs. We also present examples of ideals of convex geometries.

\subsection{Rooted circuits} In the proof of realizability of convex geometries, we will  use the characterization of convex geometries via rooted circuits and critical rooted circuits.  
Let $\cA= (U,\mathcal{C})$ be a convex geometry. 
A \emph{rooted set}  is a pair $(C,r)$ consisting of a subset $C$ of $U$ and an element $r$ of $C$. A convex geometry $\cC$ is \emph{reconstructed} from a collection of rooted sets $\cF$ if $\mathcal{C}=\{ X\subseteq U: (C,r)\in \cF\Rightarrow X\cap C\ne C\setminus \{ r\}\},$ i.e., $X\subseteq U$ belongs to $\cC$ if and only if no rooted set $(C,r)\in \cF$ 
meets $X$ in $C\setminus \{ r\}$. A rooted set $(C,r)$ is a \emph{rooted circuit} \cite{KoLo} of $\mathcal{C}$ if  $\mathcal{C}|_C=2^C\setminus \{ C\setminus \{ r\}\}$. Denote by $\cR(\cA)$ the set of all rooted circuits of $\mathcal A$. $\cR(\cA)$ has  the following properties established in \cite{KoLo}  (for proofs, see Lemma 2 and Proposition 4 of  \cite{Di}):

\begin{theorem}[\!\!\cite{KoLo, KoLoSch}]\label{rootedcircuits1}
  Let $\cA = (U,\mathcal{C})$ be a convex geometry. Then:
\begin{itemize}
\item[(i)] If $(C,r)$ is a rooted circuit of $\mathcal{C}$, then $r\in \conv(C\setminus\{r\})$;
\item[(ii)] $\cA$ can be reconstructed from the family $\cR(\cA)$ of its rooted circuits. 
\end{itemize} 
\end{theorem}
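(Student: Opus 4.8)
The plan is to prove the two parts in order, relying on the definition of rooted circuit as a pair $(C,r)$ with $\cC|_C = 2^C \setminus \{C \setminus \{r\}\}$ and on the axioms (C1)--(C5) of a convex geometry.

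For part (i), I would argue as follows. Since $(C,r)$ is a rooted circuit, $\cC|_C = 2^C \setminus \{C \setminus \{r\}\}$; in particular $C \setminus \{r\}$ is \emph{not} a trace of a convex set on $C$, i.e.\ there is no convex set $Y \in \cC$ with $Y \cap C = C \setminus \{r\}$. Consider $\conv(C \setminus \{r\})$, the smallest convex set containing $C \setminus \{r\}$. Its trace on $C$ contains $C \setminus \{r\}$, so by the rooted-circuit condition the trace cannot equal $C \setminus \{r\}$; hence it must contain $r$ as well, i.e.\ $r \in \conv(C \setminus \{r\})$. This is the whole of (i); the only subtlety is making sure that ``$\conv(C\setminus\{r\}) \cap C \neq C\setminus\{r\}$'' really does force $r$ into the hull, which is immediate because $C \setminus \{r\}$ differs from $C$ by the single element $r$.

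For part (ii), I would show that $\cC$ equals the family $\cC' := \{ X \subseteq U : (C,r)\in\cR(\cA) \Rightarrow X \cap C \neq C \setminus \{r\}\}$ reconstructed from the rooted circuits. The inclusion $\cC \subseteq \cC'$ is the easy direction: if $X \in \cC$ and $(C,r) \in \cR(\cA)$, then $X \cap C \in \cC|_C = 2^C \setminus \{C\setminus\{r\}\}$, so $X \cap C \neq C \setminus \{r\}$. For the reverse inclusion $\cC' \subseteq \cC$, take $X \in \cC'$ and suppose for contradiction that $X \notin \cC$. The idea is to extract a rooted circuit witnessing the violation: choose $Z := \conv(X) \supsetneq X$, pick a set $W$ with $X \subseteq W \subseteq Z$ maximal subject to $W \notin \cC$ (or equivalently run the extendability axiom (C3) in reverse from $Z \in \cC$ down toward $X$ and stop at the first non-convex set encountered), so that $W$ is not convex but $W \cup \{e\}$ is convex for some suitable $e \notin W$. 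Then I would locate a minimal ``obstruction'' $C \subseteq W \cup \{e\}$ containing $e$ such that $(C,e)$ is a rooted circuit, and observe that $W \cap C = C \setminus \{e\}$, contradicting $W \in \cC'$ (note $X \subseteq W$, and one checks $X \cap C = C \setminus \{e\}$ as well, or argues directly at the level of $W$). Making the bookkeeping here precise — that the minimal such $C$ genuinely satisfies $\cC|_C = 2^C \setminus \{C\setminus\{e\}\}$ — is the main obstacle, and is exactly where the anti-exchange axiom (C5) and intersection-closedness (C2) are used: (C2) guarantees $\cC|_C$ is closed under intersection within $2^C$, and minimality of $C$ together with (C5) rules out $\cC|_C$ missing more than the single set $C \setminus \{e\}$.

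Since both parts are classical and the excerpt explicitly cites \cite{KoLo, KoLoSch} and the exposition in \cite{Di} (Lemma 2 and Proposition 4 there) for the proofs, I would at this point simply defer to those references rather than reproduce the full combinatorial argument; the sketch above indicates why the statement holds and which axioms drive it. The delicate step worth flagging for the reader is the reconstruction direction (ii), where one must produce, from any non-convex superset of a given set $X \in \cC'$, an actual rooted circuit certifying the failure — this is the place a careless argument can go wrong.
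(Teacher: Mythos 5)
The paper does not actually prove this theorem: it is quoted from Korte--Lov\'asz and the text explicitly points to Lemma~2 and Proposition~4 of the cited paper of Dietrich for proofs, so there is no in-paper argument to compare against. Your proof of part~(i) is complete and correct as written: $\conv(C\setminus\{r\})$ is convex by (C1)--(C2), its trace on $C$ contains $C\setminus\{r\}$ and cannot equal it by the rooted-circuit condition, hence equals $C$ and contains $r$. The easy inclusion $\cC\subseteq\cC'$ in part~(ii) is also fine.

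The hard direction of (ii) has a genuine gap, and the specific route you sketch does not close it. Your detour through a maximal non-convex set $W$ with $X\subseteq W\subseteq\conv(X)$ severs the link to $X$: the circuit $(C,e)$ you extract satisfies $W\cap C=C\setminus\{e\}$, but the contradiction you need is $X\cap C=C\setminus\{e\}$, and $C\setminus\{e\}\subseteq W$ does not give $C\setminus\{e\}\subseteq X$. Your fallback of ``arguing at the level of $W$'' also fails, because nothing places $W$ in $\cC'$ (only $X$ is assumed there). Moreover, the root $e$ you produce lies in $\conv(X)\setminus W$, whereas in general the witnessing root need not even lie in $\conv(X)\setminus X$ relative to your intermediate set. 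The standard repair is to work with $X$ directly: since $X\notin\cC$, pick $r\in\conv(X)\setminus X$ and a minimal $A\subseteq X$ with $r\in\conv(A)$; then one shows (using anti-exchange (C5) and intersection-closedness) that $(A\cup\{r\},r)$ is a rooted circuit in the trace sense, and $X\cap(A\cup\{r\})=A$ gives the contradiction. You correctly flag the verification that the minimal obstruction really satisfies $\cC|_C=2^C\setminus\{C\setminus\{r\}\}$ as the crux, but you neither carry it out nor set up the minimality over subsets of $X$ that makes it work, so as it stands the reconstruction direction is not proved.
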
 

Dietrich \cite{Di} provided the following axiomatization of convex geometries via the rooted circuits: 

\begin{theorem}[\!\!\cite{Di}]\label{dietrich}
Let $\mathcal R$ be a set of rooted subsets of a finite set $U$. Then
$\mathcal{R}$ is the set of rooted circuits of a convex geometry if
and only if $\mathcal{R}$ satisfies the following two properties:
\begin{itemize}
    \item[(1)] $(C_1,r_1),(C_2,r_2)\in \mathcal{R}$ and $C_1\subseteq C_2$ implies $C_1=C_2$ and $r_1=r_2$;
    \item[(2)] $(C_1,r_1),(C_2,r_2)\in \mathcal{R}$ and $r_1\in C_2\setminus \{ r_2\}$ implies that there exists $(C_3,r_2)\in \mathcal{R}$ such that $C_3\subseteq (C_1\cup C_2)\setminus \{r_1\}$.
\end{itemize}
\end{theorem}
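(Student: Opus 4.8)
This result is due to Dietrich \cite{Di}; we outline how one would prove it.

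\textbf{Necessity.} Suppose $\cR=\cR(\cA)$ for a convex geometry $\cA=(U,\cC)$. For property (1), let $(C_1,r_1),(C_2,r_2)\in\cR$ with $C_1\subseteq C_2$. Since traces commute, $\cC|_{C_1}=(\cC|_{C_2})|_{C_1}$, and by definition of a rooted circuit $\cC|_{C_2}=2^{C_2}\setminus\{C_2\setminus\{r_2\}\}$; thus the only subset of $C_1$ that can be absent from $\cC|_{C_1}$ is the trace of the single absent set $C_2\setminus\{r_2\}$. Examining the cases according to whether $r_2\in C_1$ and whether $C_1\subsetneq C_2$ shows that unless $C_1=C_2$ the set $C_1$ is shattered by $\cC$, contradicting that $(C_1,r_1)$ is a circuit; and once $C_1=C_2$, comparing the two presentations of $\cC|_{C_1}$ forces $r_1=r_2$. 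For property (2), let $(C_1,r_1),(C_2,r_2)\in\cR$ with $r_1\in C_2\setminus\{r_2\}$, and put $A=(C_1\cup C_2)\setminus\{r_1,r_2\}$. By Theorem \ref{rootedcircuits1}(i), $r_i\in\conv(C_i\setminus\{r_i\})$; since $C_2\setminus\{r_2\}\subseteq A\cup\{r_1\}$ and $C_1\setminus\{r_1\}\subseteq A\cup\{r_2\}$, monotonicity of $\conv$ gives $r_2\in\conv(A\cup\{r_1\})$ and $r_1\in\conv(A\cup\{r_2\})$. Applying the anti-exchange axiom (C5) to the convex set $\conv(A)$, with the distinct points $r_1,r_2$, rules out $r_1,r_2\notin\conv(A)$ simultaneously, and with the closure property of $\conv$ this forces $r_2\in\conv(A)$. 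Choosing a minimal $C_3'\subseteq A$ with $r_2\in\conv(C_3')$, the pair $(C_3'\cup\{r_2\},r_2)$ is a rooted circuit of $\cA$ (by the minimal-spanning-set description of rooted circuits), and $C_3'\cup\{r_2\}\subseteq A\cup\{r_2\}=(C_1\cup C_2)\setminus\{r_1\}$, as required.

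\textbf{Sufficiency.} Assume $\cR$ satisfies (1) and (2) and set $\cC:=\{X\subseteq U: X\cap C\ne C\setminus\{r\}\text{ for all }(C,r)\in\cR\}$. Then $U\in\cC$ trivially, $\varnothing\in\cC$ since no rooted circuit is a singleton, and $\cC$ is intersection-closed: if $(X\cap Y)\cap C=C\setminus\{r\}$ with $X,Y\in\cC$ then $r$ lies outside (say) $X$ while $C\setminus\{r\}\subseteq X$, forcing $X\cap C=C\setminus\{r\}$, a contradiction. The heart of the argument is extendability (C3), and this is where axiom (2) is used. If $\cC$ failed to be extendable at some $X\ne U$, then for every $e\in U\setminus X$ there would be $(C_e,r_e)\in\cR$ with $(X\cup\{e\})\cap C_e=C_e\setminus\{r_e\}$; since $X\in\cC$ this forces $e\in C_e$, $r_e\in(U\setminus X)\setminus\{e\}$, and $C_e\subseteq X\cup\{e,r_e\}$. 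Following the assignment $e\mapsto r_e$ inside the finite set $U\setminus X$ yields a cycle $f_0\mapsto f_1\mapsto\cdots\mapsto f_{k-1}\mapsto f_0$. Starting from $(C_{f_0},f_1)$ and repeatedly eliminating $f_j$ between the current circuit (rooted at $f_j$, contained in $X\cup\{f_0,f_j\}$, and containing $f_0$) and $(C_{f_j},f_{j+1})$ via axiom (2), one obtains for each $j$ a circuit $(D_j,f_j)\in\cR$ with $D_j\subseteq X\cup\{f_0,f_j\}$. If $f_0$ ever drops out of some $D_j$, then $D_j\setminus\{f_j\}\subseteq X$, i.e. $X\cap D_j=D_j\setminus\{f_j\}$, contradicting $X\in\cC$; otherwise the cycle closes at a circuit $(D_k,f_0)$ with $D_k\subseteq X\cup\{f_0\}$, giving the same contradiction. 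Hence $\cC$ is a convex geometry. It remains to check $\cR(\cC)=\cR$: for $(C,r)\in\cR$ the set $C\setminus\{r\}$ is excluded from $\cC|_C$ by construction, property (1) ensures $(C,r)$ is the only member of $\cR$ supported inside $C$, and combining this with the reconstruction identity of Theorem \ref{rootedcircuits1}(ii) for $\cC$ one shows that no further subset of $C$ is missing, so $(C,r)\in\cR(\cC)$; conversely, if $(D,s)\in\cR(\cC)$ then $D\setminus\{s\}\notin\cC$, hence some $(E,t)\in\cR$ meets $D\setminus\{s\}$ in $E\setminus\{t\}$, and analysing the cases $t=s$ (which gives $E\subseteq D$, so $(D,s)=(E,s)\in\cR$ by property (1)) and $t\notin D$ (which contradicts minimality of the rooted circuit $(D,s)$ through a trace computation) shows $(D,s)\in\cR$.

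\textbf{Main obstacle.} The technical core is the cyclic elimination argument establishing extendability, together with the subsequent identification $\cR(\cC)=\cR$: both the inclusion $\cR\subseteq\cR(\cC)$ (one must show the prescribed sets are genuine circuits, not merely that $C\setminus\{r\}$ is forbidden) and its converse hinge on a careful interplay of axioms (1) and (2) — this interplay being precisely the substance of Dietrich's theorem. By contrast, the necessity direction is essentially routine given Theorem \ref{rootedcircuits1} and the anti-exchange property.
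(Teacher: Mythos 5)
The paper does not prove this statement: it is quoted from Dietrich \cite{Di} as a black box (with Theorem~\ref{rootedcircuits1} citing Lemma~2 and Proposition~4 of that paper), so there is no in-paper proof to compare yours against; what follows is an assessment of your outline on its own terms. The necessity direction is sound: the trace computation for (1) is correct, and the anti-exchange argument for (2) is clean. Note, however, that the latter leans on the converse of Theorem~\ref{rootedcircuits1}(i) --- that every inclusion-minimal $D\subseteq U\setminus\{r\}$ with $r\in\conv(D)$ yields a rooted circuit $(D\cup\{r\},r)$ --- which is itself a nontrivial lemma of \cite{Di} not available in the present paper, so it would have to be proved rather than invoked. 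In the sufficiency direction, your verification of (C1), (C2) and, via the cyclic elimination along the functional digraph $e\mapsto r_e$, of (C3) is correct and essentially complete. (Two small caveats: axioms (1)--(2) do not by themselves exclude a singleton rooted set $(\{r\},r)$, which would destroy $\varnothing\in\cC$, so one must assume $|C|\ge 2$; and the claim $\varnothing\in\cC$ deserves that one line.)

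The genuine gap is the identification $\cR(\cC)=\cR$. For $\cR\subseteq\cR(\cC)$ you must exhibit, for every $S\subseteq C$ with $S\ne C\setminus\{r\}$, a set $X\in\cC$ with $X\cap C=S$; the natural candidate $X=S\cup(U\setminus C)$ can be blocked by \emph{other} circuits of $\cR$, so property (1) and ``the reconstruction identity'' do not suffice --- axiom (2) has to be deployed again (e.g., by an elimination argument showing that no cube of the complement of $\cC$ strictly contains the cube associated with $(C,r)$). In the converse inclusion, your case $t=s$ concludes $E=D$ ``by property (1)'', but $(D,s)$ is only known to lie in $\cR(\cC)$, not in $\cR$, so (1) does not apply; one needs the maximality of the corresponding cube instead. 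And the case $t\notin D$ does not visibly contradict anything: the circuit $(E,t)$ excludes only some of the sets tracing to $D\setminus\{s\}$, so ``a trace computation'' is an assertion, not an argument. Since your own closing paragraph concedes that precisely these steps are the substance of Dietrich's theorem, the proposal should be read as a correct high-level plan whose hardest component is left open rather than as a proof.
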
 

Korte and Lov\'asz \cite{KoLo} (see also the book \cite{KoLoSch})
identified a canonical subset of rooted circuits, which they call
critical rooted circuits. In terms of convex geometries, a rooted
circuit $(C,a)$ of a convex geometry $\cA = (U,\cC)$ is a
\emph{critical rooted circuit} if $\conv(C)\setminus \{ a\}$ does not
belong to $\cC$ but $\conv(C)\setminus \{ a,b\}$ belongs to $\cC$ for
any $b\in C\setminus \{ a\}$. Denote by $\mathcal{R}_0(\cA)$ the set
of all critical rooted circuits of $\cA$.  The importance of the
family $\cR_0(\cA)$ of critical rooted circuits stems from the fact
that $\cA$ can be uniquely determined by $\cR_0(\cA)$ and $\cR_0(\cA)$
is minimal with respect to this property. This is: if $\cA$ can be
determined by $\cR'\subseteq \cR(\cA)$, then
$\cR_0(\cA)\subseteq \cR'$ \cite{KoLo, KoLoSch} (for a precise
definition of ``determined by'', see \cite{KoLoSch}).


To each rooted set $(C,r)$ of a convex geometry $\cA=(U,\cC)$ we can associate the cube $Q(C,r)$: $Q(C,r)$ consists of all 
subsets of $U$ of the form $(C\setminus{r})\cup X'$ with $X'\subseteq U\setminus C$. The cube $Q(C,r)$ can be encoded by the sign vector $X(C,r)$, where $X_e(C,r)=+1$ if $e\in C\setminus \{ r\}$, $X_r(C,r)=-1$, and $X_e(C,r)=0$ if $e\in U\setminus C$. 
If $(C,r)$ is a rooted circuit, then the trace of $\cC$ on $C$ coincides with $2^C\setminus \{ C\setminus \{ r\}\}$, thus the cube $Q(C,r)$ belongs to the complement $\cA^*$ of $\cA$ and, furthermore,  $Q(C,r)$ is a maximal by inclusion cube of $\cA^*$.  
Consequently,  $Q(C,r)$ is an $(U\setminus C)$-cube of 
$\cA^*$. The converse also holds:

\begin{lemma} \label{rooted-circuits-cubes} Let $\cA=(U,\cC)$ be a convex geometry. If $Q$ is a maximal cube of $\cA^*$, then there exists $(C,r)\in \cR(\cA)$ such that $Q=Q(C,r)$. 
\end{lemma}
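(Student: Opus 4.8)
The plan is to take a maximal cube $Q$ of $\cA^*$, recover from it a candidate rooted set $(C,r)$, and then verify that $(C,r)$ is in fact a rooted circuit with $Q=Q(C,r)$. Concretely, a cube $Q$ of $\Q_n$ is a $(U\setminus C)$-cube for some $C\subseteq U$: it consists of all sets $Y\cup X'$ with $X'\subseteq U\setminus C$, where $Y\subseteq C$ is the support. Maximality of $Q$ among cubes contained in $\cA^*$ will force the ``free directions'' of $Q$ to be exactly the coordinates outside $C$, and it will pin down $Y$ to be of the form $C\setminus\{r\}$ for a (to be identified) unique $r\in C$. So the first step is to write $Q$ as a $Z$-cube with support $Y\subseteq Z^{c}$ and argue that, after relabeling, $Z=U\setminus C$ and $Y\cup X' \in \cA^*$ for all $X'\subseteq U\setminus C$.

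The key step is to show $|C\setminus Y|=1$. Since every set in $Q$ lies in $\cA^*$, no set of the form $Y\cup X'$ is convex. Suppose $Y$ omitted two elements $r,s$ of $C$. Consider enlarging $Q$ in the direction $r$: the cube $Q'$ on coordinate set $(U\setminus C)\cup\{r\}$ with support $Y$. If $Q'\subseteq\cA^*$ we contradict maximality of $Q$, so some set $Y\cup X'\cup\{r\}$ is convex, i.e.\ in $\cC$. Here I would lean on $\cA$ being a convex geometry (in particular an ample/isometric set family) together with the trace description: the trace $\cC|_C$ must be $2^C$ minus some downward-type obstruction, and a maximal non-convex cube forces that obstruction to be a single set $C\setminus\{r\}$. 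Making this precise is where the real work is: I expect to argue that the trace $\cA^*|_C$ (equivalently the complement of $\cC|_C$ inside $2^C$) is exactly $\{C\setminus\{r\}\}$, because (a) $C\setminus\{r\}\in\cA^*|_C$ since $Q\subseteq\cA^*$ and $C\setminus\{r\}\supseteq Y$ is realized in $Q$ (taking $X'=\varnothing$ gives $Y$; one then needs $Y=C\setminus\{r\}$ itself), and (b) any other non-convex subset of $C$ would, by the extendability axiom (C3) / anti-exchange (C5), let us grow $Q$ inside $\cA^*$, contradicting maximality.

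Once we know $\cC|_C = 2^C\setminus\{C\setminus\{r\}\}$, we have by definition that $(C,r)$ is a rooted circuit of $\cA$, so $(C,r)\in\cR(\cA)$. It then remains to check $Q=Q(C,r)$: by construction $Q(C,r)$ is the $(U\setminus C)$-cube with support $C\setminus\{r\}$, which is exactly the cube we extracted from $Q$ (here we use the preceding paragraph's identification $Y=C\setminus\{r\}$ and $Z=U\setminus C$), and by the discussion just before the lemma $Q(C,r)$ is a maximal cube of $\cA^*$; since $Q\subseteq Q(C,r)$ and $Q$ is maximal, equality follows. The main obstacle is the middle step: carefully using maximality of $Q$ together with the convex-geometry axioms to conclude that exactly one element of $C$ is missing from the support and that the trace obstruction is the single set $C\setminus\{r\}$; I would expect to invoke $\cA^*$ being a union-closed antimatroid (so its maximal cubes behave rigidly) or, equivalently, the isometry/ampleness of $\cC$ to control how cubes sit inside $\cA^*$.
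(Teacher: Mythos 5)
Your reduction of the lemma to the statement ``the support $Y$ of $Q$ misses exactly one element $r$ of $C:=U\setminus X$, and $\cC|_C=2^C\setminus\{C\setminus\{r\}\}$'' is the right target, but that statement \emph{is} the lemma, and you explicitly leave its proof as ``where the real work is.'' The mechanism you propose for it does not obviously close the gap: maximality of $Q$ only tells you that for each coordinate $e$ outside the free set $X$, the one-step enlargement of $Q$ in direction $e$ meets $\cC$, i.e.\ \emph{some} set of the form $Y\cup X'\cup\{e\}$ (or $(Y\setminus\{f\})\cup X'$) is convex. That constrains only very special subsets of $U$, and there is no visible route from there to your claim (b) that ``any other non-convex subset of $C$ would let us grow $Q$.'' There is also a genuine error in the setup: $\cA^*|_C$ is \emph{not} the complement of $\cC|_C$ in $2^C$ (traces of complements are not complements of traces); what you actually need is that no convex set traces to $C\setminus\{r\}$, which is precisely the assertion $Q(C,r)\subseteq\cA^*$, not a statement about $\cA^*|_C$.

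The paper avoids all of this by using the reconstruction theorem (Theorem~\ref{rootedcircuits1}(ii)), which you never invoke: since the bottom vertex $Y$ of $Q$ lies in $\cA^*$, i.e.\ $Y\notin\cC$, there already exists a rooted circuit $(C,r)$ with $Y\cap C=C\setminus\{r\}$; then $Q$ is contained in the cube $Q(C,r)$, which lies in $\cA^*$ by the discussion preceding the lemma, and maximality of $Q$ is used exactly once, to upgrade $Q\subseteq Q(C,r)$ to equality. (Even on that route one must take a little care that the chosen circuit satisfies $r\notin X$, so that the containment $Q\subseteq Q(C,r)$ actually holds.) So the missing idea in your proposal is precisely the tool that makes the proof short: rooted circuits are already guaranteed to exist by Theorem~\ref{rootedcircuits1}(ii), and one should produce the circuit first and compare cubes afterwards, rather than trying to manufacture the circuit from scratch out of the maximality of $Q$ and the axioms (C3)/(C5).
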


\begin{proof} Suppose that $Q$ consists of the sets $\{ Y\cup X': X'\subseteq X\}$ for $Y\subseteq U\setminus X$. Since $Y\notin \cC$, by \Cref{rootedcircuits1}(ii), there exists a rooted circuit $(C,r)$ such that $Y\cap C=C\setminus \{ r\}.$ But then $Q$ is contained in the cube $Q(C,r)$ of $\cA^*$ whose vertices are the sets $\{ (C\setminus \{ r\})\cup X': X'\subseteq U\setminus C\}$. Since $Q$ is maximal, $Q=Q(C,r)$. 
\end{proof}

\subsection{Positive circuits}
In case of ideals of convex geometries, along with rooted circuits we
also have to define positive circuits.  Let $\cI=(U',\cC')$ be an ideal of a
convex geometry $\cA=(U,\cC)$. A \emph{positive circuit} of $\cI$ \emph{with
  respect to} $\cA$ is a subset $P \subseteq U$ such that
$\cC'_{|P} = 2^P \setminus \{P\}$. In particular, for any
$e \in U\setminus U'$, $\{e\}$ is a positive circuit of $\cI$ with respect to
$\cA$.  Given an ideal $\cI$ of a convex geometry $\cA$, we denote by
$\cP(\cI)$ the set of positive circuits of $\cI$  with respect to $\cA$. We now show
that $\cC'$ is precisely the set of elements of
$\cC$ not containing positive circuits.

\begin{lemma}\label{lem-conflicts-def}
  For any ideal $\cI=(U',\cC')$ of a convex geometry $\cA=(U,\cC)$,
  we have
  \[\cC' = \{X \in \cC : \forall P \in \cP(\cI),  P\not\subseteq X\}.\]
\end{lemma}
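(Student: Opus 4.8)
The plan is to prove the two inclusions separately, using the fact that $\cC' \subseteq \cC$ (an ideal is a subfamily) together with the downward-closure property of ideals and the definition of positive circuits. First I would establish the easy inclusion $\cC' \subseteq \{X \in \cC : \forall P \in \cP(\cI),\ P \not\subseteq X\}$. Take $X \in \cC'$; certainly $X \in \cC$. Suppose for contradiction that some positive circuit $P \in \cP(\cI)$ satisfies $P \subseteq X$. By definition of a positive circuit, $\cC'_{|P} = 2^P \setminus \{P\}$; in particular $P \notin \cC'_{|P}$, i.e. there is no $Y \in \cC'$ with $Y \cap P = P$, equivalently no $Y \in \cC'$ with $P \subseteq Y$. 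But $X \in \cC'$ and $P \subseteq X$, a contradiction. Hence no positive circuit is contained in $X$, which is what we want.

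For the reverse inclusion, take $X \in \cC$ with $P \not\subseteq X$ for every $P \in \cP(\cI)$, and I must show $X \in \cC'$. The idea is to build up $X$ from $\varnothing$ inside $\cC'$, or more directly to exhibit some $Y \in \cC'$ with $X \subseteq Y$ and then invoke the ideal (downward-closure) property: since $X \in \cC$, $Y \in \cC'$, and $X \subseteq Y$, the definition of an ideal gives $X \in \cC'$. To find such a $Y$, I would argue contrapositively: if $X \notin \cC'$, I want to produce a positive circuit $P \subseteq X$. Consider the trace $\cC'_{|X}$. Since $\cC' \subseteq \cC$ and $\cC$ is intersection-closed with $X \in \cC$, and since the interval of $\cC'$ below any of its members agrees with that of $\cC$, the trace $\cC'_{|X}$ is itself intersection-closed and downward-closed within $2^X$ (here I use that $Y \cap X \in \cC$ for $Y \in \cC'$, hence $Y \cap X \le Y$ lies in $\cC'$, so $\cC'_{|X} \subseteq \cC'$). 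If $X \notin \cC'$ then $X \notin \cC'_{|X}$, so $\cC'_{|X} \subsetneq 2^X$. Take a minimal subset $P \subseteq X$ with $P \notin \cC'_{|X}$; by minimality every proper subset of $P$ lies in $\cC'_{|X}$, hence in $\cC'$, so $\cC'_{|P} = 2^P \setminus \{P\}$, i.e. $P$ is a positive circuit with $P \subseteq X$, a contradiction.

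The main obstacle I anticipate is justifying carefully that the trace $\cC'_{|X}$ behaves well — specifically that for $Y \in \cC'$ we have $Y \cap X \in \cC'$ (not merely in $\cC$), so that membership in $\cC'_{|X}$ is the same as membership in $\cC'$ for subsets of $X$. This is exactly where the ideal axiom is used: $Y \cap X \in \cC$ because $\cC$ is intersection-closed and $X, Y \in \cC$; then $Y \cap X \subseteq Y$ with $Y \cap X \in \cC$ and $Y \in \cC'$ forces $Y \cap X \in \cC'$ by the definition of an ideal. Once this is in place, the minimality argument producing a positive circuit is routine, and the downward-closure step closing the reverse inclusion is immediate. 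I would also double-check the edge case $X = \varnothing$: then $\varnothing \in \cC'$ always (ideals contain $\varnothing$ since they are downward-closed and $\cC'$ is nonempty, or this can be taken as part of the setup), and no positive circuit can be contained in $\varnothing$ unless $\varnothing$ itself were one, which it is not since $\cC'_{|\varnothing} = \{\varnothing\} = 2^\varnothing$; so the statement holds vacuously there.
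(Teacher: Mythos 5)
Your first inclusion and the overall contrapositive framing are fine, but the reverse inclusion has a genuine gap at the step where you conclude ``so $\cC'_{|P} = 2^P \setminus \{P\}$''. Minimality of $P$ among subsets of $X$ not lying in $\cC'_{|X}$ (equivalently, not lying in $\cC'$: for $S\subseteq X$ one has $S\in\cC'_{|X}$ iff $S\in\cC'$, by the trace argument you gave) yields only one half of the required equality, namely $2^P\setminus\{P\}\subseteq \cC'_{|P}$. It does not yield $P\notin\cC'_{|P}$: the condition $P\notin\cC'$ does not prevent $P$ from being contained in some $Y\in\cC'$, in which case $Y\cap P=P$ puts $P$ into its own trace and $P$ is \emph{not} a positive circuit. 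This can happen precisely when $P\notin\cC$, so the ideal axiom cannot rule it out; and your side remark that $\cC'_{|X}$ is ``downward-closed within $2^X$'' --- which is exactly the property that would exclude such a $Y$ --- is false in general. Concretely: take four collinear points $1<2<3<4$, let $\cC$ be the intervals (a convex geometry), let $\cC'=\cC\setminus\{U\}$ (an ideal), and let $X=U$. Then $P=\{1,3\}$ is a minimal subset of $X$ not in $\cC'$ and all its proper subsets lie in $\cC'$, yet $\{1,3\}\subseteq\{1,2,3\}\in\cC'$, so $\cC'_{|P}=2^P$ and $P$ is not a positive circuit (the positive circuit contained in $X$ that the lemma promises is $\{1,4\}$, not $\{1,3\}$).

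The paper avoids this trap by not taking an arbitrary minimal non-member of $\cC'$ below $X$: it takes a minimal counterexample $Y\in\cC\setminus\cC'$ containing no positive circuit, forms the specific candidate $P_0=\{e\in Y: Y\setminus\{e\}\in\cC'\}$, uses intersection-closedness of $\cC'$ to get $2^{P_0}\setminus\{P_0\}\subseteq\cC'_{|P_0}$, and then, assuming $P_0$ is \emph{not} a positive circuit, derives a contradiction from a set of $\cC'$ containing $P_0$ via the ideal axiom and the isometry of ample families. Some argument of this kind --- i.e., a way of certifying that the chosen $P$ is contained in no member of $\cC'$ --- is needed to repair your proof.
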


\begin{proof}
  Let
  $\cC'' = \{X \in \cC : \forall P \in \cP(\cI), P\not\subseteq X\}$.
  Pick any $X \in \cC'$ and any $P \subseteq X$. Then
  $P \cap X = P \in \cC'_{|P}$ and thus
  $\cC'_{|P} \neq 2^P \setminus \{P\}$. This establishes that
  $P \notin \cP(\cI)$ for any $P \subseteq X$ and consequently
  $X \in \cC''$. Therefore $\cC' \subseteq \cC''$.

  Suppose now that there exists $Y \in \cC''\setminus \cC'$ and assume
  that $Y$ is such a set of minimal size. By minimality of $Y$, for
  any $e \in Y$, $Y\setminus \{e\} \in \cC''$ if and only if
  $Y \setminus \{e\} \in \cC'$. Let
  $P_0 = \{e \in Y : Y\setminus \{e\} \in \cC'\}$. Since $\cC'$ is
  intersection-closed, for any
  $\varnothing \subsetneq P' \subseteq P_0$, we have
  $Y \setminus P' \in \cC'$. This shows that
  $2^{P_0} \setminus \{P_0\} \subseteq \cC'_{|P_0} \subseteq 2^{P_0}$.
  Since $P_0 \subseteq Y$ and since $Y \in \cC''$, by the definition
  of $\cC''$, we get $P_0 \notin \cP(\cI)$ and thus
  $\cC'_{|P_0} = 2^{P_0}$ by the definition of $\cP(\cI)$.
  Consequently, there exists $X \in \cC'$ such that $P_0 \subseteq X$.
  Let $Z = X \cap Y$ and observe that $P_0 \subseteq Z$.  Since $\cC$
  is intersection-closed, $Z \in \cC$. If $Z = Y$, then
  $Y \subseteq X \in \cC'$, contradicting the fact that $\cC'$ is an
  ideal of $\cC$. Thus we have $Z \subsetneq Y$.  Since $Z \in \cC'$
  and $Y \notin \cC'$, there exists a set $Z \subseteq A \subseteq Y$
  and an element $e \in Y \setminus A$ such that $A \in \cC'$ and
  $B = A\cup \{e\} \in \cC\setminus \cC'$ ($A$ and $e$ exist since
  $\cC$ is ample and thus isometric). By minimality of $Y$, $B =
  Y$. Consequently $e \in P_0$ by the definition of $P_0$, but this is
  impossible since $P_0 \subseteq Z \subseteq A$.  This establishes
  that $\cC'' \subseteq \cC'$.
\end{proof}



\subsection{Convex dimension} Edelman and Jamison~\cite{EdJa} provided
a nice characterization of convex geometries via order convexity.
Given a universe $U = \{e_1,\ldots,e_n\}$, a total order $<$ on $U$,
and the reflexive closure $\leq$ of $<$, call an \emph{ending
  interval} of $\leq$ any set of the form $\{e\in U: e_i\leq e\}$ for
some $e_i\in U$.  Given a set $\Upsilon=\{ \leq_1,\ldots,\leq_d\}$ of
$d$ total orders on $U$, we say that a set family $\cC \subseteq 2^U$
is \emph{generated} by the set $\Upsilon$ if $\varnothing \in \cC$ and
a nonempty set $C$ belongs to $\cC$ if and only if $C$ is the
intersection of $d$ ending intervals, one from each order $\leq_i$ of
$\Upsilon$, i.e., if there exists $(e_i)_{1\leq i \leq d} \in U^d$
such that $C = \{e \in U : e_i \leq_i e, \forall 1 \leq i \leq d
\}$. Edelman and Jamison proved the following result:

\begin{theorem}[\!\!{\cite[Theorem 5.2]{EdJa}}] Any set family $\cC \subseteq 2^U$ 
generated by a set of total orders on $U$ is a convex geometry and, conversely,
any convex geometry $\cA=(U,\cC)$  can be generated by a set of total
orders on $U$. 
\end{theorem}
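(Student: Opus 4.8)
The plan is to prove the two directions of the equivalence separately. For the forward direction, suppose $\cC$ is generated by a set $\Upsilon = \{\leq_1,\ldots,\leq_d\}$ of total orders on $U$. I would verify axioms (C1)--(C3) directly. Axiom (C1) is immediate: $\varnothing \in \cC$ by fiat, and $U$ is the intersection of the $d$ full ending intervals (take $e_i$ to be the $\leq_i$-minimum in each order). For (C2) (intersection-closed), if $C = \bigcap_i \{e : e_i \leq_i e\}$ and $C' = \bigcap_i \{e : e_i' \leq_i e\}$ are both nonempty, then $C \cap C' = \bigcap_i \{e : \max_{\leq_i}(e_i, e_i') \leq_i e\}$, which is again an intersection of ending intervals, hence in $\cC$ (or empty, which is also in $\cC$). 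For (C3) (extendability), given a nonempty $C = \bigcap_i \{e : e_i \leq_i e\} \in \cC$ with $C \neq U$, I would pick, among all elements of $U \setminus C$, one that is "closest to being addable": concretely, for each order $\leq_i$ let $f_i$ be the $\leq_i$-predecessor of $e_i$ (if $e_i$ is not already $\leq_i$-minimal), and observe that replacing $e_i$ by $f_i$ enlarges the $i$-th ending interval by exactly one element. The hard part of this direction is showing one can add a single new point: I would argue that if $e$ is the $\leq_j$-largest element of $U \setminus C$ for some well-chosen order $\leq_j$ (e.g.\ one achieving the "bottleneck"), then $C \cup \{e\}$ is still an intersection of ending intervals. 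This requires a careful bookkeeping argument tracking, for each $i$, how far $e$ sits below the current threshold $e_i$; the element $e$ to add should be one that is a threshold-boundary element in at least one order and does not drag in extra points in the others.

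For the converse direction, let $\cA = (U,\cC)$ be a convex geometry. The standard construction (this is essentially the Edelman--Jamison argument) is to build the $d$ orders from maximal chains in the lattice $\cC$. Since $\cC$ is intersection-closed with $\varnothing, U \in \cC$ and is extendable, it forms a lattice under inclusion in which every maximal chain from $\varnothing$ to $U$ has length $n = |U|$ (each step (C3) adds exactly one element; dually, removing an extreme point drops one element by (C4)). Each maximal chain $\varnothing = C_0 \subsetneq C_1 \subsetneq \cdots \subsetneq C_n = U$ yields a total order $\leq$ on $U$ by declaring $e \leq e'$ iff $e$ enters the chain no later than $e'$, i.e.\ the $C_k$'s are exactly the ending intervals of $\leq$. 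I would take $\Upsilon$ to be the collection of orders arising from \emph{all} maximal chains of $\cC$ (or a sufficient subfamily). One then must check that $\cC$ is exactly the family generated by $\Upsilon$. The inclusion "$\cC \subseteq$ generated family" is easy: any $C \in \cC$ lies on some maximal chain, so $C$ is an ending interval of the corresponding order, and for the remaining orders $C$ is contained in \emph{some} ending interval, namely the smallest one containing it, say $C^{(i)}$; then $C \subseteq \bigcap_i C^{(i)}$, and I must show equality holds for an appropriate choice of chains. The reverse inclusion "generated family $\subseteq \cC$" follows from (C2): an intersection of ending intervals, each of which is a member of $\cC$ (being a link in a maximal chain), is itself in $\cC$.

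The main obstacle, in both directions, is the precise matching between "intersection of ending intervals" and "member of $\cC$" — specifically, in the converse, ensuring that for every $C \in \cC$ one can find, for each order, an ending interval whose intersection over all orders is \emph{exactly} $C$ and not a strict superset. The key lemma to isolate here is: for each $C \in \cC$ and each point $x \in U \setminus C$, there is a maximal chain through $C$ (equivalently, a total order in a suitable generating family) in which the smallest ending interval containing $C$ but not yet containing $x$ \emph{excludes} $x$ — this lets us "separate" $C$ from every external point using finitely many orders. This uses the anti-exchange axiom (C5): it guarantees that the extreme points of $\conv(C \cup x)$ behave well, so that we can reach $C$ along a chain that postpones adding $x$. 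I would prove this separation lemma first, then assemble the $d \le$ some bound (the convex dimension) orders, and conclude. Once the separation lemma is in hand, both inclusions follow by the routine arguments sketched above, so I expect the write-up to be short modulo that lemma.
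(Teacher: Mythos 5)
First, note that the paper does not prove this statement at all: it is quoted verbatim from Edelman--Jamison \cite[Theorem~5.2]{EdJa}, so there is no in-paper argument to compare against and your proposal has to stand on its own. Your converse direction is essentially correct and, in fact, closes more easily than you seem to realize: once you observe that every $C\in\cC$ lies on a maximal chain of $\cC$ (graded, each covering step adding one point, which you justify via (C3) and (C4)), the total order coming from that chain has $C$ itself as one of its ending intervals. Hence in the intersection $\bigcap_i C^{(i)}$ of smallest containing ending intervals, one factor \emph{is} $C$, so equality with $C$ is automatic and no ``separation lemma'' via anti-exchange is needed; the reverse inclusion is just (C2). (Minor slip: with ``$e\leq e'$ iff $e$ enters no later than $e'$'' the chain members are beginning intervals, not ending intervals --- you need the reverse order.)

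The genuine gap is in the forward direction, at exactly the step you flag as ``careful bookkeeping'': proving (C3) for the family generated by $\Upsilon$. The rule you propose --- take $e$ to be the $\leq_j$-largest element of $U\setminus C$ for a ``well-chosen'' $j$ --- is not pinned down, and as stated it can fail to control which extra points get dragged into the closure: if that largest element already lies in the $j$-th interval $I_j$ defining $C$, the $j$-th constraint does not exclude other points of $U\setminus C$, and $\conv(C\cup\{e\})$ may strictly contain $C\cup\{e\}$. A clean way to close this: first verify the anti-exchange axiom (C5) directly for intersections of ending intervals (writing $C=\bigcap_i\{e: e_i\leq_i e\}$ with $e_i=\min_{\leq_i}C$, one checks that $q\in\conv(C\cup\{p\})$ forces $p<_{i_0}q<_{i_0}e_{i_0}$ for some $i_0$, which is incompatible with $p\in\conv(C\cup\{q\})$ at the same index $i_0$). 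Then either invoke the standard equivalence of (C1)+(C2)+(C5) with the convex-geometry axioms, or argue extendability directly: among all $x\in U\setminus C$ choose one minimizing $|\conv(C\cup\{x\})|$, where $\conv(A)$ is the intersection of the smallest ending intervals containing $A$ (one checks this is the closure in the generated family); if some $y\in\conv(C\cup\{x\})\setminus(C\cup\{x\})$ existed, then $\conv(C\cup\{y\})=\conv(C\cup\{x\})$ by minimality, contradicting (C5). With that replacement the forward direction is complete; the rest of your outline ((C1), (C2)) is fine.
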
 

The \emph{convex dimension} $\cdim(\cA)$ of a convex geometry $\cA=(U,\cC)$ is the least number of total orders generating $\cC$. 

\subsection{(Ideals of) convex geometries as meet-(semi)lattices} 
A poset $L=(X,\leq)$ is a \emph{meet-semilattice} if for every $x,y\in L$ there is unique largest element $x\wedge y\in L$ such that $x
\wedge y\leq x,y$;  $x\wedge y$ is called the \emph{meet} of $x$ and $y$.
A poset $L=(X,\leq)$ is a \emph{join-semilattice} if for every $x,y\in L$ there is unique smallest element $x\vee y\in L$ such that $x\vee y\geq x,y$;   $x\vee y$ is called the \emph{join} of $x$ and $y$. A poset is a \emph{lattice} if it is both a join- and a meet-semilattice. Note that both join and meet are associative and commutative operations, so in order to take the join or meet over all the elements of a set $Y$ we sometimes just write $\bigvee Y$ and $\bigwedge Y$, respectively. Note that if a poset has a global minimum $\hat{0}$ or a global maximum $\hat{1}$, then we set $\bigvee \emptyset=\hat{0}$ and $\bigwedge \emptyset=\hat{1}$, respectively. An element $x\in L$ is called \emph{join-irreducible} if $x=\bigvee Y$ implies $x\in Y$ for all $Y\subseteq L$. The set of join-irreducible elements is denoted by $\mathcal{J}(L)$. Similarly, an element $x\in L$ is called \emph{meet-irreducible} if $x=\bigwedge Y$ implies $x\in Y$ for all $Y\subseteq L$. The set of meet-irreducible elements is denoted $\mathcal{M}(L)$. Following Dilworth a \emph{lower locally distributive lattice} (LLD) is a lattice $L$ such that for all $x\in L$ there is a unique inclusion-minimal $J_x\subseteq \mathcal{J}(L)$ such that $x=\bigvee  J_x$ (in fact, 
Dilworth introduced the dual lattices, the \emph{upper locally distributive lattices (ULD)}). This definition is equivalent to Edelman's notion of semi-distributive lattice, but since there are several notions of semidistributivity in the literature, we prefer the name LLD. We refer to~\cite{Stern,Mon,KDiss} for different equivalent characterizations, different names, as well as many different instances of LLDs and ULDs. We continue with the characterization of lattices of convex geometries provided by Edelman~\cite{Edel}: 

\begin{theorem}[\!\!{\cite[Theorem 3.3]{Edel}}] \label{lattice-convexgeometry} A lattice is lower locally distributive if and only if it is isomorphic to the inclusion-order of convex sets of a convex geometry.
\end{theorem}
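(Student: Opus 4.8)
# Proof Proposal for Theorem~\ref{lattice-convexgeometry}

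The plan is to prove both directions of the equivalence between lower locally distributive lattices and inclusion-orders of convex sets of convex geometries, using the characterizations already developed.

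\textbf{From convex geometries to LLD lattices.} First I would show that if $\cA = (U,\cC)$ is a convex geometry, then $(\cC, \subseteq)$ is a lattice: by axiom (C2) it is intersection-closed, so the meet of $X, Y \in \cC$ is $X \cap Y$; together with the global maximum $U$ from (C1), this makes $(\cC,\subseteq)$ a meet-semilattice with a top element, hence a lattice (the join $X \vee Y$ is the intersection of all convex sets containing $X \cup Y$, i.e.\ $\conv(X \cup Y)$). To show it is LLD, I would identify the join-irreducible elements. A convex set $C$ is join-irreducible precisely when it has a unique maximal proper convex subset; I expect these to be exactly the sets of the form $\conv(\{e_1,\dots,e_k\})$ that cannot be split, and more usefully, I would relate them to the extreme-point structure via axioms (C3)/(C4)/(C5). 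The key claim is that for each $X \in \cC$ there is a unique inclusion-minimal set $J_X$ of join-irreducibles with $X = \bigvee J_X$. The natural candidate is to take, for each extreme point $e \in \ext(X)$, the join-irreducible $\conv(X \setminus (\ext(X) \setminus \{e\}))$ or some canonical join-irreducible ``generated at $e$'', using the anti-exchange axiom (C5) to guarantee uniqueness. Concretely, I would argue that the canonical join representation of $X$ is indexed by $\ext(X)$, each extreme point contributing one join-irreducible, and minimality/uniqueness follows from (C5): if two distinct minimal join representations existed, anti-exchange would be violated.

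\textbf{From LLD lattices to convex geometries.} For the converse, given an LLD lattice $L$, I would let $U = \mathcal{J}(L)$ be the set of join-irreducibles and define a map $\varphi: L \to 2^{U}$ by $\varphi(x) = \{\, j \in \mathcal{J}(L) : j \leq x \,\}$ (the standard order-embedding of a lattice into the lattice of down-sets of its join-irreducibles). This $\varphi$ is injective and order-preserving. Set $\cC = \varphi(L) \subseteq 2^U$. I would then verify (C1): $\varphi(\hat 0) = \varnothing$ and $\varphi(\hat 1) = U$. For (C2): since $\varphi(x) \cap \varphi(y) = \varphi(x \wedge y)$ (because a join-irreducible below both $x$ and $y$ is below $x \wedge y$, using that $L$ is a lattice), $\cC$ is intersection-closed. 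The crucial axiom is (C3), extendability, and this is exactly where the LLD hypothesis enters: I would use the existence of the unique minimal $J_x \subseteq \mathcal{J}(L)$ with $x = \bigvee J_x$ to produce, for any $x \neq \hat 1$, a join-irreducible $j \notin \varphi(x)$ such that $\varphi(x) \cup \{j\}$ is again in $\cC$; the candidate is $j = \bigvee J_{x'}$-related, i.e.\ a minimal join-irreducible in $J_y$ for some $y$ covering $x$. LLD guarantees $x$ has a cover $y$ with $J_y = J_x \cup \{j\}$ for a single new join-irreducible $j$ (this is essentially the ``locally distributive'' local structure: the interval structure above $x$ is Boolean on the newly-addable join-irreducibles). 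Then $\varphi(y) = \varphi(x) \cup \{j\}$, giving (C3).

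\textbf{Main obstacle.} The hard part will be the careful handling of the canonical join representation in the LLD setting and its precise interaction with the set-theoretic picture — in particular, verifying that ``$x$ covered by $y$'' translates to ``$\varphi(y) = \varphi(x) \cup \{j\}$ for exactly one join-irreducible $j$'', which is the combinatorial heart of why LLD lattices are extendable (antimatroid-like). I would expect to lean on standard facts about LLD/ULD lattices (available in the references \cite{Stern,Mon,KDiss,Edel}) rather than reproving them, and indeed since this is quoted as \cite[Theorem 3.3]{Edel}, the cleanest presentation is to cite Edelman's argument and only sketch the dictionary: meet = intersection of convex sets, join-irreducibles $\leftrightarrow$ a natural family of ``principal'' convex sets, extreme points $\leftrightarrow$ the canonical join decomposition, and anti-exchange (C5) $\leftrightarrow$ uniqueness of that decomposition.
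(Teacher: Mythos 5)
The paper does not prove this statement; it is imported verbatim as \cite[Theorem~3.3]{Edel}, so there is no in-paper argument to compare against. Your outline is the standard proof of Edelman's theorem and is sound in both directions: meet $=$ intersection, join $=$ convex hull of the union, and the combinatorial heart is exactly where you locate it --- the canonical join representation of a convex set is governed by its extreme points via anti-exchange, and in an LLD lattice every cover $x\lessdot y$ adds exactly one join-irreducible to $\{j\in\mathcal{J}(L): j\le x\}$, which yields extendability (C3). Two small corrections to your dictionary: the join-irreducibles of $(\cC,\subseteq)$ are precisely the sets $\conv(\{e\})$, $e\in U$ (equivalently the convex sets with a unique extreme point, by (C4) and (C5)), and the canonical joinands of $X$ are $\{\conv(\{e\}): e\in\ext(X)\}$ rather than your candidate $\conv(X\setminus(\ext(X)\setminus\{e\}))$, which need not be join-irreducible. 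As written your text is a plan that defers both key lemmas (uniqueness of the minimal join representation from (C5); the single-join-irreducible-per-cover property of LLD lattices) to the literature --- acceptable here, since the paper itself only cites Edelman, but these are the steps that would need to be carried out to make the argument self-contained.
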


The meet-irreducible elements of $L=(\mathcal{C},\subseteq)$ correspond to the copoints of the convex geometry  
$\cA=(U,\mathcal{C})$; a \emph{copoint} attached at point $p$ is a maximal by inclusion convex set of $\mathcal{C}$ not containing $p$. Edelman and Jamison \cite{EdJa} characterized convex geometries as the convexity spaces for which each copoint has a unique attaching point. Furthermore, Edelman and Saks \cite{EdSa}  characterized the convex dimension $\cdim(\cC)$ of a convex geometry $\cA$ in term of the poset of all meet-irreducibles  in the following nice way: 

\begin{theorem}[\!\!\cite{EdSa}] \label{cdim-convexgeom} For a convex geometry $\cA=(U,\mathcal{C})$, $\cdim(\cC)$ is equal to the size of the largest antichain of the poset $(\mathcal{M}(L),\subseteq)$, where $L=(\mathcal{C},\subseteq)$. 
\end{theorem}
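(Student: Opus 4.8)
The plan is to prove the two inequalities $\cdim(\cC)\le w$ and $w\le\cdim(\cC)$, where $w$ denotes the maximum size of an antichain of $P:=(\mathcal{M}(L),\subseteq)$. Throughout I will use the characterization of generation by total orders together with three facts: (i) in the finite lattice $L=(\cC,\subseteq)$ every element is the intersection of the meet-irreducibles lying above it (with the empty intersection equal to $U$); (ii) the meet-irreducibles of $L$ are precisely the copoints of $\cA$; and (iii) $\cA$, being ample, is isometric, so that a shortest path in $G(\cC)$ between two convex sets $A\subsetneq B$ runs through convex sets only and adds the elements of $B\setminus A$ one at a time.

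For $\cdim(\cC)\le w$: by Dilworth's theorem partition $P$ into $w$ chains $\mathcal D_1,\dots,\mathcal D_w$, and write $\mathcal D_t=(M^t_1\subsetneq\cdots\subsetneq M^t_{l_t})$, a chain of convex sets. Using (iii), complete $\mathcal D_t$ to a maximal chain of convex sets $\varnothing=C^t_0\subsetneq C^t_1\subsetneq\cdots\subsetneq C^t_n=U$ (concatenate shortest $G(\cC)$-paths $\varnothing\rightsquigarrow M^t_1\rightsquigarrow\cdots\rightsquigarrow M^t_{l_t}\rightsquigarrow U$), and let $\le_t$ be the total order on $U$ whose ending intervals are exactly $C^t_1,\dots,C^t_n$; in particular every nonempty $M^t_a$ is an ending interval of $\le_t$. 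Since all $C^t_k$ are convex and $\cC$ is intersection-closed, every set that is an intersection of ending intervals, one per order, belongs to $\cC\cup\{\varnothing\}$. Conversely, a nonempty $C\in\cC$ is, by (i), the intersection of the meet-irreducibles above it; grouping these along the partition $\{\mathcal D_t\}$, the members of $\mathcal D_t$ above $C$ (all nonempty, hence ending intervals of $\le_t$) intersect to the smallest one among them, or to $U$ if there is none. Thus $C$ is an intersection of $w$ ending intervals, one from each order, so $\cC$ is generated by $\le_1,\dots,\le_w$ and $\cdim(\cC)\le w$.

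For $w\le\cdim(\cC)$: fix total orders $\le_1,\dots,\le_d$ generating $\cC$ with $d=\cdim(\cC)$, and for nonempty $C\in\cC$ put $\pi_i(C)=\min_{\le_i}C$, so that $C=\bigcap_i\{e\in U:\pi_i(C)\le_i e\}$. Let $M$ be a nonempty copoint attached at $p$. Since $p\notin M$, there is an index $j$ with $p<_j\pi_j(M)$, hence $M\subseteq\{e:p<_j e\}$; but $\{e:p<_j e\}$ is itself a convex set avoiding $p$ (it is either $\varnothing$, or of the generating form, obtained from the $\le_j$-successor of $p$ in coordinate $j$ and the $\le_i$-minima in the other coordinates), so the maximality of the copoint $M$ forces $M=\{e:p<_j e\}$. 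Put $\sigma(M):=j$. Now if $\sigma(M)=\sigma(M')=j$ for nonempty copoints $M,M'$ attached at $p,p'$, then $M=\{e:p<_j e\}$ and $M'=\{e:p'<_j e\}$, and since $p,p'$ are $\le_j$-comparable so are $M$ and $M'$; hence every fibre $\sigma^{-1}(j)$ is a chain. Adding $\varnothing$ to one of these chains if $\varnothing$ itself happens to be a copoint (it is then comparable to every element of $P$), we have covered $P$ by $d$ chains, so every antichain of $P$ has at most $d$ elements, i.e. $w\le d=\cdim(\cC)$. Together with the previous paragraph this gives $\cdim(\cC)=w$.

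The routine parts are the bookkeeping with ending intervals and the passage between copoints, meet-irreducibles, and the coordinates $\pi_i$. The step I expect to be the real obstacle is the completion used for the first inequality: one must know that an arbitrary chain of convex sets extends to a \emph{maximal} chain of convex sets from $\varnothing$ to $U$, which is exactly where ampleness/isometry of $\cC$ enters; without forcing the ending intervals of the constructed orders to be convex, the argument would only yield $\cC\subseteq\langle\le_1,\dots,\le_w\rangle$ rather than equality.
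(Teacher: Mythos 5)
The paper states this theorem purely as a citation to Edelman and Saks and contains no proof of its own, so there is no internal argument to compare against. Your proof is correct and follows what is essentially the classical route: Dilworth's theorem turns a minimum chain cover of $(\mathcal{M}(L),\subseteq)$ into generating total orders (using local extendability/isometry of convex geometries to refine each chain of copoints to a maximal chain of convex sets, whose members become ending intervals), and conversely each generating order can only account for a chain of copoints, since a copoint attached at $p$ must equal the smallest ending interval of some $\le_j$ avoiding $p$. Both inequalities check out; the only delicate points (that $\varnothing$ may be meet-irreducible but never occurs in the irredundant meet decomposition of a nonempty convex set, and that the set $\{e: p<_j e\}$ is itself a generated convex set) are correctly handled.
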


An \emph{ideal} $I$ of a poset is a subset that is downwards-closed, i.e., if $x\leq y$ and $y\in I$, then $x\in I$. From~\Cref{lattice-convexgeometry} by definition we get a justification of the name \emph{ideals} of convex geometries:
\begin{remark}
    A meet-semilattice is an ideal of a lower locally distributive lattice if and only if it is isomorphic to the inclusion-order of convex sets of an ideal of a convex geometry.
\end{remark}



\subsection{Examples of ideals of convex geometries}
Edelman and Jamison~\cite{EdJa} and Korte et al.~\cite{KoLoSch}
presented numerous examples of convex geometries and antimatroids,
arising from geometry, language theory, and chip firing games. Bandelt
et al.~\cite{BaChDrKo} presented simplicial complexes and median set
systems as examples of bouquets of convex geometries.  We show that
multisimplicial complexes and median set systems are ideals of
particular convex geometries, called downset alignments. 
A \emph{simplicial complex} $L$ on a set $U$ is a family of subsets of $U$, called \emph{simplices} or \emph{faces} of $L$,
such that if $\sigma\in L$ and $\sigma'\subseteq \sigma$, then $\sigma'\in L$. The \emph{facets} of $L$ are the maximal (by inclusion)
faces of $L$. The \emph{dimension} $d$ of $L$ is the size of its largest face minus one. A multi-subset $\sigma$ of $U$ is a subset of $U$ such that
each element $e\in U$ is given with its multiplicity $n_{\sigma}(e)$ (the number of times, $e$ occurs in $\sigma$). A \emph{multisimplicial complex} $L$ on a set $U$ is a family of multi-subsets of $U$, such that if $\sigma\in L$ and $\sigma'$ is a multi-subset of $U$ such that
$n_{\sigma'}(e)\le n_{\sigma}(e)$ for each $e\in U$, then $\sigma'\in L$. The \emph{size} of a face $\sigma$ of $L$
is the size of $\{ e\in U: n_{\sigma}(e)>0\}$. The \emph{dimension} $d$ of $L$ is the size of its largest face minus one.


A \emph{median set system} is a set family 
$(U,\cC)$ satisfying (C1$'$), (C2), and 
\begin{itemize}
\item[(C6)] for any $x \neq y$ in $U$, there exists some
$K \in \cC$ with $|\{ x,y\} \cap K| =1$,
\item[(C7)] $K_i, M_i \in \cC$ $(i=1,2,3)$ with $K_i \cup K_j
\subseteq M_k$ for $\{i,j,k\} = \{1,2,3\}$ implies $K_1 \cup K_2 \cup K_3 \in \cC$.
\end{itemize}
The name is justified by the fact that by virtue of (C2) and (C7), $\cC$ is closed under the median operation
$m$  of $2^U$ defined by
$$m(L_1, L_2, L_3) := (L_1 \cap L_2) \cup (L_1 \cap L_3) \cup (L_2 \cap L_3).$$
The resulting meet-semilattice is called a \emph{median semilattice} \cite{Av,BiKi,Shol}. Median semilattices are locally lower distributive. Indeed, by a result of Birkhoff and Kiss~\cite{BiKi} they can be characterized by the  property that all 
principal ideals $\downarr x$ are distributive lattices and three elements have an upper bound whenever each pair of them does. 

Every abstract finite median algebra (for which the former set-theoretic
ternary operation is axiomatized) or, equivalently, any median graph can be represented 
by a median set system via the Sholander embedding~\cite{Shol} into some power set $2^U$. 
An inherent feature of median algebras/graphs is that they may be oriented so that
any element can serve as the empty set in the associated set representation: a
median set system $\cC$ is mapped onto another one $\cC \vartriangle Z := \{A \vartriangle Z: K \in \cC \},$
by the automorphism of $2^U$ taking the symmetric difference with a fixed
set $Z \in \cC$. 


Let $\leq$ be a partial order on $U$. The \emph{downset alignment} ${\mathcal D}_P$ \cite{EdJa} of the poset $P=(U,\leq)$ consists of all ideals of $P$. It was noticed in \cite{EdJa} that the downset alignments are convex geometries and it was shown in \cite[Theorem 3.2]{EdJa} that a convex geometry $\cA=(U,\mathcal{C})$ is a downset alignment if and only if $\mathcal{C}$ is union-closed. From the lattice point of view, downset alignments simply correspond to distributive lattices. 

We now define bouquets of downset alignments in the same vein as
bouquets of convex geometries were defined.  We say that a pair
$\cA=(U,\cC)$ is a \emph{bouquet of downset alignments} if $\mathcal{C}$
satisfies (C1$'$), (C2), (C3$'$), and
\begin{itemize}
\item[(C8)] for all $X,Y\in \mathcal{C}$, if there exists $Z \in \mathcal{C}$ such that  $X \cup Y \subseteq Z$, then $X\cup Y \in \mathcal{C}$. \hfill (locally union-closed)
\end{itemize}

Note that bouquets of downset alignments are precisely bouquets of
convex geometries with the additional property of being locally
union-closed. Note also that principal ideals of bouquets of downset alignments
are downset alignments and that ideals of downset alignments are bouquets
of downset alignments.  Differently from bouquets of convex geometries,
that are not always ideals of convex geometries by~\Cref{nonideal}, the next result shows that bouquets of
downset alignments and ideals of downset alignments
coincide:



\begin{theorem}
    \label{ideal}
Every bouquet of downset alignments $\cA=(U,\mathcal{C})$ with $\ell$ maximal convex sets is an ideal of a downset alignment $\cA^+$ such that $\vcd(\cA^+)\leq \ell\cdot\vcd(\cC)$. 
\end{theorem}
\begin{proof}
  Consider a bouquet of downset alignments $\cA=(U,\mathcal{C})$. Let
  $X_1, \ldots, X_\ell$ be the maximal convex sets of $\cA$ and let $\downarr X_1,\ldots,\downarr X_\ell$ be their principal ideals. For each
  $1 \leq i \leq \ell$, let $d_i= \vcd(\downarr X_i)$. We show by
  induction on $\ell$ that $\cA$ is an ideal of a downset alignment
  $\cA^+$ such that $\vcd(\cA^+)\leq \sum_{i=1}^\ell d_i$. 
 
  If $\ell = 1$, then $\cA$ is a downset alignment and we are
  done. Otherwise, take an element $M\in \cC$ that can be written as
  the intersection of at least two maximal convex sets of $\cA$, and
  that is inclusion maximal for this property. Let $\mathcal{S}$ be
  the set of all maximal convex sets of $\cA$ containing $M$, and
  assume without loss of generality, that
  $\mathcal{S}=\{X_1,\ldots,X_k\}$.  Consequently,
  $M = \bigcap_{i=1}^k X_i$.  Let $\cC'_0$ be the set of all sets of
  the form $Y_1\cup \ldots \cup Y_k$ where for each $0 \leq i \leq k$,
  $Y_i \in \cC$ and $M \subseteq Y_i \subseteq X_i$.  Now define
  $\cC'$ as $\cC \cup \cC'_0$ and set $\cA'=(U,\cC')$. Observe that
  the maximal convex sets of $\cA'$ are the sets
  $X_{k+1}, \ldots, X_{\ell}$, and $X'=\bigcup_{i=1}^kX_i$. 

  We first show that $\cC'$ is still a bouquet of downset alignments,
  i.e., that $\cC'$ satisfies (C1$'$), (C2), (C3$'$) and (C8).
  Observe that $\varnothing \in \cC \subseteq \cC'$, establishing
  (C1$'$).  To prove that $\cC'$ is intersection-closed, let
  $A,B\in \cC'$. If $A,B\in \cC$, then there is nothing to show, so
  suppose first $A\in \mathcal{C}$ and $B\in \cC'\setminus \cC$. Hence
  we can represent $B=Y_1\cup\ldots\cup Y_k$ and we consider
  $A\cap (Y_1\cup\ldots\cup Y_k)=(A\cap Y_1)\cup\ldots\cup (A\cap
  Y_k)$. Since all of these sets are subsets of $A$, since
  $\mathcal{C}$ is locally union-closed, their union is in
  $\mathcal{C}$.  If $A,B\in \cC'\setminus \cC$, then
  $A=Y_1\cup\ldots\cup Y_k$ and $B=Y'_1\cup\ldots\cup Y'_k$ and
  $(Y_1\cup\ldots\cup Y_k)\cap (Y'_1\cup\ldots\cup Y'_k)=Y_1\cap
  (Y'_1\cup\ldots\cup Y'_k) \cup \ldots Y_k\cap (Y'_1\cup\ldots\cup
  Y'_k)$ but for each $i$ here we have
  $M\subseteq Y_i\cap (Y'_1\cup\ldots\cup Y'_k)\subseteq X_i$, hence
  their union is in $\cC'$ by definition of $\cC'$.

  In order to show that $\cC'$ is locally union-closed let
  $A,B\in \cC'$ and let $A,B\subseteq Z$. Since we can assume that
  $A\notin\mathcal{C}$, without loss of generality we can set
  $Z=\bigcup_{i=1}^kX_i$ and $A=Y_1\cup\ldots\cup Y_k$ as before. Now,
  $B=(B\cap X_1)\cup\ldots\cup(B\cap X_k)$, where each term is in
  $\mathcal{C}$. Then,
  $A\cup B=(Y_1\cup (B\cap X_1)\cup\ldots\cup Y_k\cup (B\cap X_k))$,
  where each term is in $\mathcal{C}$ by local union-closedness and each term contains $M$ as a subset. Hence
  by definition of $\cC'$, $A\cup B\in\cC'$.

  Now, let us show that $\cC'$ is locally extendable. For this purpose
  let $A,B\in \cC'$ be distinct sets such that $B$ is a maximal convex
  set and $A\subseteq B$. If $B\in \mathcal{C}$, then since
  $\downarr B$ does not contains any set of $\cC'_0$, also
  $A\in \mathcal{C}$ and there is nothing to show. So suppose that
  $B\in \cC'\setminus \mathcal{C}$, i.e., that
  $B=X'=\bigcup_{i=1}^kX_i$ (since $B$ is a maximal convex set of
  $\cC'$). If $A\in \mathcal{C}$, then $A\cap X_i\subsetneq X_i$ for
  some $1 \leq i \leq k$, and thus, there is an element
  $e\in X_i\setminus A\subseteq B\setminus A$ such that
  $(A\cap X_i)\cup e\in \mathcal{C}$ since $\mathcal{C}$ is locally
  extendable. Now, $\cC'$ is locally union-closed also
  $A\cup(A\cap X_i)\cup e=A\cup e\in\cC'$.  If
  $A\in\cC'\setminus \cC$, then $A=Y_1\cup\ldots\cup Y_k$ with
  $Y_i \subseteq X_i$ for $1 \leq i \leq k$ and
  $Y_{j}\subsetneq X_{j}$ for some $1 \leq j \leq k$ (since
  $A \subsetneq B = X'$). Since $\mathcal{C}$ is locally extendable,
  there is $e\in X_j\setminus Y_j\subseteq B\setminus A$ such that
  $Y_j\cup e\in \mathcal{C}$. But then by definition of $\cC'$, we
  have that $A\cup e\in \cC'$.

  Let us now show that $\cC$ is an ideal of $\cC'$. For this, suppose
  there exist $X \in \cC$ and $R \in \cC'_0 = \cC' \setminus \cC$ such
  that $R \subseteq X$. We can assume that $X$ is a maximal convex set
  and since $M \subseteq R$ by definition of $\cC'_0$, $X$ necessarily
  belongs to $\mathcal{S}$ and we can assume that $X = X_1$. Let
  $R=Y_1\cup \ldots \cup Y_k\subseteq X_1$ where for all
  $1\leq i\leq k$, $M \subseteq Y_i\subseteq X_i$ and $Y_i \in
  C$. Since $R \notin \cC$, necessarily $Y_1 \subsetneq R$ and thus we
  can assume that $M \subsetneq Y_2$. Since,
  $ Y_2 \subseteq R \subseteq X_1$ and
  $Y_2 \subseteq X_2$, we have 
  $M \subsetneq Y_2 \subseteq X_1 \cap X_2$, contradicting the
  maximality of $M$.

  We now show that
  $\vcd(\downarr X') \leq \sum_{i={1}}^k \vcd(\downarr X_i)$.
  Consider a subset $Z \subseteq X'$ with $|Z| = \vcd(\downarr X')$
  that is shattered by $\downarr X'$. For each $1 \leq i \leq k$, let
  $Z_i = X \cap X_i$. We claim that $Z_i$ is shattered by
  $\downarr X_i$. Indeed, for any $Z'' \subseteq Z_i \subseteq Z$,
  there exists $X'' \in \downarr X'$ such that $X'' \cap Z =
  Z''$. Observe that
  $(X'' \cap X_i) \cap Z_i = X'' \cap X_i \cap Z = Z''$ since
  $X'' \cap Z = Z''\subseteq Z_i \subseteq X_i$. Note also that
  $X'' \cap X_i \in \cC'$ and thus $X'' \cap X_i \in \downarr
  X_i$. Consequently, $Z_i$ is shattered by $\downarr X_i$. Therefore,
  since $Z = \cup_{i=1}^k Z_i$,
  $\vcd(\downarr X') = |Z| \leq \sum_{i=1}^k |Z_i| \leq \sum_{i=1}^k
  \vcd(\downarr X_i)$, and we are done.

  Consequently, $\cA'$ is a bouquet of downset alignments, $\cA$ is an
  ideal of $\cA'$, and the maximal convex sets of $\cA'$ are the sets
  $X_{k+1}, \ldots, X_{\ell}$, and $X'=\bigcup_{i=1}^kX_i$. Since
  $k \geq 2$, $\cC'$ has less inclusion-maximal convex sets than
  $\cC$. By induction hypothesis, $\cA'$ is an ideal of a downset
  alignment $\cA^+$ such that
  $\vcd(\cA^+) \leq \vcd(\downarr X') + \sum_{i={k+1}}^\ell
  \vcd(\downarr X_i) \leq \sum_{i={1}}^k \vcd(\downarr X_i) +
  \sum_{i={k+1}}^\ell \vcd(\downarr X_i) = \sum_{i={1}}^\ell
  \vcd(\downarr X_i)$. By transitivity, $\cA$ is an ideal of
  $\cA^+$. This concludes the proof.
\end{proof}

The grid $\mathbb{N}^n$ can be viewed as the covering graph of the poset $(\mathbb{N}^n,\leq)$, where for $x,y\in \mathbb{N}^n$ with $x=(x_1,\ldots,x_n)$ and $y=(y_1,\ldots, y_n)$, we set $x\leq y$ if and only $x_i\le y_i$ for $i=1,\ldots,n$. Since ideals of the grid $(\mathbb{N}^n,\leq)$ are ample, the VC-dimension $\VCdim(L)$ of any such ideal $L$ is the largest
subcube of $L$. Notice that $\VCdim(L)$ can be smaller than the dimension $n$ of the grid.
It is obvious that there exists a bijection between the ideals of the grid $(\mathbb{N}^n,\leq)$ and the multisimplicial complexes on a set $X$ of size $n$. The simplicial complexes are in bijection with the ideals of the $n$-cube $\{0,1\}^n\subset \mathbb{N}^n$. The VC-dimension of any (multi)simplicial complex coincides with its dimension. Hence, we can apply \Cref{ideal} to obtain the following result:

\begin{corollary}\label{co:ideal}
 Every multisimplicial complex of dimension $d$ and $\ell$ facets is an  ideal of a downset alignment of VC-dimension at most $\ell d$.
\end{corollary}

We get an analogous bound for median set systems (since they principal ideals are distributive lattices, median set systems are bouquets of downset alignments):

\begin{corollary}\label{co:mediansetsystem}
 Every median set system of VC-dimension $d$ and having $\ell$ maximal elements is an  ideal of a downset alignment of VC-dimension at most $\ell d$.   
\end{corollary}

\section{Realization of convex geometries and of their ideals}
In  this section we prove the main results of the paper: any ideal $\cI$ of  $\cA$ of a convex geometry $\cA=(U,\cC)$ admits a realization $(\cH_n,K(\cR_0))$ in ${\mathbb R}^n$, where $n=|U|$, $\cH_n$ are the coordinate hyperplanes of ${\mathbb R}^n$, and the number of facets of  $K(\cR_0)$ is at most the number of critical rooted circuits of $\cA$ and positive circuits of $\cI$. In particular, this yields a representation for convex geometries.

Furthermore, we prove that  each convex geometry is realizable in dimension equal to its convex dimension. Finally, we prove that trees and multisimplicial complexes have realizations in dimension bounded linearly in their VC-dimension. 

\subsection{Realization of ideals of convex geometries}

Let $U=\{ 1,\ldots,n\}$ and let $\cA=(U,\cC)$ be a convex geometry on
$U$ having $\cR:=\cR(\cA)$ as the set of rooted circuits
and $\cR_0:=\cR_0(\cA)$ as 
the set of critical rooted circuits.  For each rooted circuit $(C,r)$ of
$\cA$ consider the hyperplane $H(C,r)$ defined by the equation
$n x_r=\sum_{e\in C\setminus \{ r\}}
x_e$. 
Denote by $K(C,r)$ the open halfspace of ${\mathbb R}^n$ determined by
$H(C,r)$ as
\[K(C,r)=\left\{ x=(x_{1},\ldots,x_{n})\in {\mathbb R}^n: n x_r>\sum_{e\in C\setminus \{ r\}} x_e \right\}.\] 
Let $K(\cR)=\bigcap_{(C,r)\in \cR} K(C,r)$ and $K(\cR_0)=\bigcap_{(C,r)\in \cR_0} K(C,r)$ be the open polyhedra  that are respectively the intersection of all $K(C,r)$ taken over all rooted circuits $(C,r)$ of $\cA$ and the  intersection of all $K(C,r)$ taken over all critical rooted circuits $(C,r)$ of $\cA$. 
Obviously, $K(\cR)\subseteq K(\cR_0)$.

Consider now an ideal $\cI=(U',\cC')$ of the convex geometry
$\cA=(U,\mathcal{C})$. Let $\cP:=\cP(\cI)$ be the set of positive circuits of $\cI$
with respect to $\cA$.  For each $P \in \cP$, the hyperplane $H(P)$ is defined by the equation
$\sum_{e\in P} x_e=0$. Denote by $K(P)$ the open halfspace of ${\mathbb R}^n$ determined by
$H(P)$ as
\[K(P)=\left\{ x=(x_{1},\ldots,x_{n})\in {\mathbb R}^n: \sum_{e\in P} x_e<0\right\}.\] 

Finally, consider the open polyhedra $K(\cP) = \bigcap_{P\in \cP} K(P)$, $K= K(\cR) \cap K(\cP)$, and
$K_0=K(\cR_0) \cap K(\cP)$.  Clearly, $K \subseteq K_0\subseteq K(\cP)$.


\begin{theorem}\label{realizations-convex-geometries}
  For a convex geometry $\cA=(U,\cC)$, the pairs $(\cH_n,K(\cR))$ and
  $(\cH_n,K(\cR_0))$ are realizations of $\cA$. More generally, for any ideal $\cI=(U',\cC')$ of $\cA$, the pairs
  $(\cH_n,K)$ and $(\cH_n,K_0)$ are realizations of $\cI$.
\end{theorem}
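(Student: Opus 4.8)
The plan is to prove the statement about ideals; the statement about $\cA$ is the case $\cI=\cA$, for which $\cP(\cA)=\varnothing$ and hence $K=K(\cR)$, $K_0=K(\cR_0)$. For an open convex $L\subseteq\mathbb{R}^n$ the regions of $\cM(\cH_n,L)$ are precisely the open orthants $\cO(Z)$ that meet $L$, and the tope of such a region is $Z$ (read as a $\{\pm1\}$-vector); since a COM is recovered from its tope graph, it suffices to prove
\[
  \{Z\subseteq U:\cO(Z)\cap K\neq\varnothing\}\;=\;\{Z\subseteq U:\cO(Z)\cap K_0\neq\varnothing\}\;=\;\cC'.
\]
As $K\subseteq K_0$, one inclusion among the orthant sets is free, so the theorem reduces to: \textbf{(a)} if $X\in\cC'$ then $\cO(X)\cap K\neq\varnothing$; \textbf{(b)} if $X\notin\cC'$ then $\cO(X)\cap K_0=\varnothing$.

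Part (b) is the easy direction. By \Cref{lem-conflicts-def}, $X\notin\cC'$ forces either $X\notin\cC$, or $P\subseteq X$ for some $P\in\cP$. In the second case every $x\in\cO(X)$ has $x_e>0$ for $e\in P$, hence $\sum_{e\in P}x_e>0$, so $x\notin K(P)\supseteq K(\cP)\supseteq K_0$. In the first case $\cA$ is reconstructed from $\cR_0$ (\Cref{criticalrootedcircuits1}), so there is $(C,r)\in\cR_0$ with $X\cap C=C\setminus\{r\}$; here $C\setminus\{r\}\neq\varnothing$ (otherwise $\mathcal{C}|_{\{r\}}$ would omit $\varnothing$, impossible), so for $x\in\cO(X)$ we get $|U|x_r<0<\sum_{e\in C\setminus\{r\}}x_e$, i.e.\ $x\notin K(C,r)\supseteq K(\cR_0)\supseteq K_0$. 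Either way $\cO(X)\cap K_0=\varnothing$.

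Part (a) is the heart of the argument. Fix $X\in\cC'$ (so $X\in\cC$), and use the anti-exchange axiom to build a maximal chain of convex sets $\varnothing=X_0\subsetneq X_1\subsetneq\cdots\subsetneq X_n=U$ with $X_i\setminus X_{i-1}=\{a_i\}$ and $X_k=X$: below $X$ one deletes extreme points one by one (as $X_i=\conv(\ext(X_i))$ by (C4), $\ext(X_i)\neq\varnothing$ whenever $X_i\neq\varnothing$), above $X$ one extends one element at a time by (C3). Choose $0<\epsilon\leq 1/n^2$ and put
\[
  x_{a_i}=\begin{cases}1+\epsilon i,& i\leq k,\\[2pt] -2(n+1)^{\,i-k},& i>k;\end{cases}
\]
then $x_{a_i}>0\iff i\leq k\iff a_i\in X$, so $x\in\cO(X)$. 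The one structural fact used is: \emph{for each rooted circuit $(C,r)$, writing $r=a_p$, some element of $C\setminus\{r\}$ equals $a_j$ with $j>p$.} Indeed $X_{p-1}=\{a_1,\dots,a_{p-1}\}$ is convex and avoids $r$, so $C\setminus\{r\}\subseteq X_{p-1}$ would give $X_{p-1}\cap C=C\setminus\{r\}$, contradicting $C\setminus\{r\}\notin\mathcal{C}|_C$. Granting this, one checks $|U|x_r=n\,x_{a_p}>\sum_{e\in C\setminus\{r\}}x_e$ by a two‑line estimate: for $p\leq k$, $\sum_{e\in C\setminus\{r\}}x_e\leq\sum_{j=1}^{k}x_{a_j}-x_{a_p}<(n+1)x_{a_p}-x_{a_p}=n\,x_{a_p}$, where the middle inequality is exactly what $\epsilon\leq1/n^2$ and $k\leq n$ guarantee; for $p>k$, some term of $C\setminus\{r\}$ has index $>p$ and is thus $\leq x_{a_{p+1}}=(n+1)x_{a_p}$, while the positive terms sum to $\leq\sum_{j=1}^{k}x_{a_j}<2(n+1)\leq-x_{a_p}$, so $\sum_{e\in C\setminus\{r\}}x_e<-x_{a_p}+(n+1)x_{a_p}=n\,x_{a_p}$. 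Hence $x\in K(\cR)$. Finally, for $P\in\cP$, $X\in\cC'$ gives $P\not\subseteq X$ (\Cref{lem-conflicts-def}), so $P$ contains $a_q$ with $q>k$; dropping the other (negative) terms, $\sum_{e\in P}x_e\leq\sum_{j=1}^{k}x_{a_j}+x_{a_q}\leq(n+1)-2(n+1)<0$, so $x\in K(\cP)$. Therefore $x\in\cO(X)\cap K$, proving (a).

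Parts (a) and (b) together give the displayed orthant characterization, hence $(\cH_n,K)$ and $(\cH_n,K_0)$ realize $\cI$; the case $\cI=\cA$ yields the first assertion (and this incidentally re-proves that ideals of convex geometries are ample sets). The only real obstacle is part (a): producing a single witness point in $\cO(X)$ that simultaneously meets \emph{every} rooted-circuit halfspace $K(C,r)$ and \emph{every} positive-circuit halfspace $K(P)$. This is what dictates the asymmetry of the point — moderate values on $X$, values of magnitude $2(n+1)^{\,i-k}$ off $X$ — and what makes it work is precisely the combinatorial fact that along a maximal chain of $\cC$ through $X$ the root of a rooted circuit is introduced before the last element of the rest of the circuit.
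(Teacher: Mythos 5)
Your part (a) is correct, and it is essentially an explicit, closed-form version of the paper's iterative construction in Lemma~\ref{lem-Crealized}: the structural fact that along a maximal chain of convex sets through $X$ the root of every rooted circuit is introduced before some other element of that circuit is exactly what drives both arguments, and your single witness point with values $1+\epsilon i$ on $X$ and $-2(n+1)^{i-k}$ off $X$ checks out. The initial reduction to the two orthant statements is also fine.

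The gap is in part (b), in the case $X\notin\cC$. You invoke Theorem~\ref{criticalrootedcircuits1} to produce a \emph{critical} rooted circuit $(C,r)$ with $X\cap C=C\setminus\{r\}$. No such circuit need exist. The paper's own example shows this: for the convex geometry of Figure~\ref{fig:convgeo} on $U=\{1,2,3,4\}$, the only critical rooted circuits are $(\{1,2,3\},2)$ and $(\{2,3,4\},3)$, yet $\{1,4\}\notin\cC$ while $\{1,4\}\cap\{1,2,3\}=\{1\}\neq\{1,3\}$ and $\{1,4\}\cap\{2,3,4\}=\{4\}\neq\{2,4\}$. The word ``reconstructed'' in Theorem~\ref{criticalrootedcircuits1} cannot be read via the trace formula stated earlier for general rooted-set families (the Korte--Lov\'asz reconstruction from critical circuits proceeds by the shelling/generation process, not by that formula); read literally as you do, it would assert that the trace condition on $\cR_0$ alone recovers $\cC$, which the example refutes. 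What remains true---and is precisely the hard half of the theorem---is that $\cO(Y)\cap K(\cR_0)=\varnothing$ for every $Y\in\cC^*$; but establishing this requires deriving, for each such $Y$, a \emph{valid inequality} for $K(\cR_0)$ obtained as a positive combination of the critical-circuit inequalities (e.g.\ $3x_1+x_4<8x_2$ in the example), with root outside $Y$ and positive-coefficient support inside $Y$. This is the content of Lemma~\ref{lem-onlyCrealizedK0}, proved by a Gaussian-elimination-style reverse induction on $|Y|$ with careful control of the coefficients, and your proof bypasses it entirely. Your argument does correctly establish the assertions for $K(\cR)$ and $K$ (replace ``critical rooted circuit'' by ``rooted circuit'' and use Theorem~\ref{rootedcircuits1}(ii), as in Lemma~\ref{lem-onlyCrealizedK}), but the $K(\cR_0)$ and $K_0$ assertions are exactly where the work lies.
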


The proof of the first assertion is a direct consequence of the second assertion. 
The proof that $(\cH_n,K(\cR))$ and $(\cH_n,K)$ are realizations of $\cA$
and $\cI$ is inspired by (but is simpler than) the proof of
Kashiwabara et al. \cite{KaNaOk} that convex geometries can be
represented via generalized convex shellings. The proof of the second 
assertion follows from four lemmas, which we prove first. 
We first show that for any $X \in \cC'$, the convex set $K$
intersects the $X$-orthant $\cO(X)$ of ${\mathbb R}^n$. 

\begin{lemma}\label{lem-Crealized}
  For any $X \in \cC'$, 
  the $X$-orthant $\cO(X)$ of ${\mathbb R}^n$ 
  intersects the convex polyhedron $K$. 
\end{lemma}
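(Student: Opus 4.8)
The plan is to produce an explicit point of $\cO(X)\cap K$ by hand. Since $X\in\cC'\subseteq\cC$, repeated application of axiom (C3) gives a maximal chain $X=X_0\subsetneq X_1\subsetneq\cdots\subsetneq X_m=U$ of convex sets, where $m=|U\setminus X|$ and $X_j=X_{j-1}\cup\{e_j\}$, so that $U\setminus X=\{e_1,\dots,e_m\}$. First I would fix a large constant $M$ (it will turn out that $M=|U|+1$ works) and define $x\in{\mathbb R}^n$ by $x_e=1$ for $e\in X$ and $x_{e_j}=-M^{j}$ for $1\le j\le m$. By construction $x$ is positive exactly on the coordinates of $X$, so $x\in\cO(X)$; and since all the inequalities defining $K=K(\cR)\cap K(\cP)$ that I check below will be strict, it will follow that $x$ lies in the open set $\cO(X)\cap K$.

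Next I would dispose of the two routine families of inequalities, which are pure bookkeeping. For a rooted circuit $(C,r)\in\cR$ with $r\in X$: the left-hand side $|U|x_r$ equals $|U|$, whereas $\sum_{e\in C\setminus\{r\}}x_e$ is a sum of at most $|U|-1$ terms each $\le 1$, hence $<|U|$. For a positive circuit $P\in\cP$: by Lemma~\ref{lem-conflicts-def} applied to $X\in\cC'$ we have $P\not\subseteq X$, so $P$ has a coordinate outside $X$ contributing a term $\le-M$ to $\sum_{e\in P}x_e$, while the at most $|U|-1$ remaining coordinates contribute at most $1$ each; hence this sum is negative.

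The remaining, and only genuinely combinatorial, case is a rooted circuit $(C,r)\in\cR$ with $r\notin X$, say $r=e_j$. Now $|U|x_r=-|U|M^{j}<0$, and the point is that $\sum_{e\in C\setminus\{r\}}x_e$ must be even smaller. The key claim is that $C\setminus\{r\}$ contains some $e_i$ with $i\ge j+1$, i.e.\ an element appearing strictly later in the shelling. To prove this I would use Theorem~\ref{rootedcircuits1}(i): $e_j=r\in\conv(C\setminus\{r\})$. If $C\setminus\{r\}$ were contained in $X_{j-1}=X\cup\{e_1,\dots,e_{j-1}\}$, then, $X_{j-1}$ being a convex set containing $C\setminus\{r\}$, we would get $e_j=r\in\conv(C\setminus\{r\})\subseteq X_{j-1}$, contradicting $e_j\notin X_{j-1}$. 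Hence some $e\in C\setminus\{r\}$ lies outside $X_{j-1}$; since $e\notin X_{j-1}$ and $e\ne e_j$, necessarily $e=e_i$ with $i\ge j+1$. Consequently $\sum_{e\in C\setminus\{r\}}x_e\le(|U|-1)-M^{i}\le(|U|-1)-M^{j+1}$, and the inequality to be verified, $-|U|M^{j}>(|U|-1)-M^{j+1}$, reduces to $M^{j}(M-|U|)>|U|-1$, which holds for $M=|U|+1$ since then $M^{j}\ge M>|U|-1$. This verifies all the constraints, so $x\in\cO(X)\cap K$.

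The hard part is precisely this last step: realizing that the negative weights on $U\setminus X$ should be made to grow geometrically along a shelling order, and then proving that a rooted circuit rooted outside $X$ is always forced to use an element shelled later — which is exactly where convexity of the intermediate sets $X_{j-1}$, hence the convex-geometry axioms, enters. I would also observe that $X\in\cC$ alone suffices for the rooted-circuit inequalities, while the hypothesis $X\in\cC'$ is used only through Lemma~\ref{lem-conflicts-def} for the positive-circuit inequalities; this explains why the very same vector $x$ realizes the $X$-orthant both for $\cA$ inside $K(\cR)$ and for the ideal $\cI$ inside $K=K(\cR)\cap K(\cP)$.
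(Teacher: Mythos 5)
Your proof is correct and follows essentially the same route as the paper's: both set the $X$-coordinates to $1$ and assign increasingly negative values to $U\setminus X$ along a shelling order supplied by (C3), the only differences being that you use explicit weights $-M^{j}$ where the paper chooses each negative coordinate adaptively (``small enough''), and you justify the key combinatorial fact---that a circuit rooted at $e_j\notin X$ must contain a later-shelled element---via Theorem~\ref{rootedcircuits1}(i), whereas the paper phrases the equivalent fact as ``the last-shelled element of a circuit contained in $X_{k+i}$ is never its root'' using the reconstruction property. Both arguments hinge on the convexity of the intermediate sets of the shelling, so the substance is identical.
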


\begin{proof}
  We prove that the $X$-orthant $\cO(X)$ intersects the convex set $K$, i.e., 
  that there exists  a point
  $x^*=(x^*_{1},\ldots,x^*_{n})\in \cO(X)\cap K$. 
  Let $|X|=k$ and suppose without loss of generality
  that $X=\{ e_1,\ldots,e_k\}$.  First, we set
  $x^*_{e_1}=x^*_{e_2}=\cdots=x^*_{e_k}=1$. In the following, we set a
  negative value for each $x^*_e, e \notin X$. Observe that for any
  rooted circuit $(C,r)$ such that $C \subseteq X$, we have
  $n x^*_r = n = |U| > (|C|-1) = \sum_{e\in C\setminus \{ r\}} x_e$ and
  thus $x^* \in K(C,r)$, independently of the values of the remaining
  $n-k$ coordinates of $x^*$. Notice also that, since $X$ belongs to
  $\cC'$, $X$ does not contain any positive circuit of $\cC'$.
  Since $X\in \cC$, by axiom (C3) of convex geometries, the elements
  of $U\setminus X$ can be ordered $e_{k+1},\ldots,e_n$, such that all
  the sets
  $X_i=X\cup \{ x_{k+1},\ldots, x_{k+i}\}=X_{i-1}\cup \{ x_{k+i}\},
  i=1,\ldots,n-k$ are convex sets of $\cC$. We define the remaining
  $n-k$ coordinates of $x^*$ (which have to be negative) following the
  order $e_{k+1},\ldots,e_n$.  Suppose that after $i-1$ steps the
  coordinates $x^*_{e_{k+1}},\ldots,x^*_{e_{k+i-1}}$ have been
  defined. At stage $i$ we need to define the value of $x^*_{e_{k+i}}$
  in order to satisfy
  \begin{itemize}
  \item all constraints of the form
    $n x^*_r>\sum_{e\in C\setminus \{ r\}} x^*_e $, where
    $(C,r)\in \cR(\cA)$, $e_{k+i}\in C$, and $C\subset X_{k+i}$;
  \item all constraints of the form $\sum_{e\in P} x^*_e<0$, where
    $P \in \cP$, $e_{k+i}\in P$, and $P\subset X_{k+i}$.
  \end{itemize}

  For any rooted circuit $(C,r)$ such that
  $e_{k+i} \in C \subseteq X_{k+i}$, the values of $x^*_e$ for all
  elements $e\in C\setminus \{ e_{k+i}\}$ have been already
  defined. Since $X_{k+i-1} \in \cC$ and since
  $(C,r)$ is a rooted circuit of $\cA$, we know that $r \ne
  e_{k+i}$. Consequently, if we set
  $x^*_{e_{k+i}} < n x^*_r - \sum_{e\in C\setminus \{ r, e_{k+i}\}}
  x^*_e$, the point $x^*$ is in the halfspace $K(C,r)$. 

  Similarly, for any positive circuit $P$ of $\cI$  such that
  $e_{k+i} \in P \subseteq X_{k+i}$, the values of $x^*_e$ for all
  elements $e\in P\setminus \{ e_{k+i}\}$ have been already
  defined. Setting
  $x^*_{e_{k+i}} < - \sum_{e\in P\setminus \{ e_{k+i}\}} x^*_e$ ensures
  that the point $x^*$ is in the halfspace $K(P)$. By choosing a
  negative value for $x^*_{e_{k+i}}$ that is small enough, we can
  satisfy all the constraints over the rooted circuits $(C,r)$ and the
  positive circuits $P$ contained in the set $X_{k+i}$.

  Consequently, 
  we have constructed a point $x^*$ belonging to the $X$-orthant
  $\cO(X)$ and to the convex polyhedron~$K$.
\end{proof}

We now show that if a set $Y$ is not in $\cC$, then we can find a rooted circuit
$(C,r)$ whose hyperplane $H(C,r)$ separates the $Y$-orthant $\cO(Y)$ from the polyhedron 
$K(\cR)$ (and thus from $K$ and $K_0$). 

\begin{lemma}\label{lem-onlyCrealizedK}
  For any $Y \in \cC^*$, there exists a rooted circuit $(C,r)$ of $\cA$ such
  that $\cO(Y) \cap K(C,r) = \varnothing$. Consequently, the hyperplane $H(C,r)$ 
  separates $\cO(Y)$ from $K(\cR)$. 
\end{lemma}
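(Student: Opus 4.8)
The plan is to use the reconstruction of $\cA$ from its rooted circuits (Theorem~\ref{rootedcircuits1}(ii)): since $Y \notin \cC$, there must be a rooted circuit $(C,r) \in \cR(\cA)$ that ``witnesses'' this, i.e.\ with $Y \cap C = C \setminus \{r\}$. I would fix such a $(C,r)$ and show its hyperplane $H(C,r)$ does the job. The key observation is that the condition $Y \cap C = C\setminus\{r\}$ means exactly that, for any point $x$ in the $Y$-orthant $\cO(Y)$, we have $x_e > 0$ for every $e \in C \setminus \{r\}$ (these elements are in $Y$) while $x_r < 0$ (since $r \notin Y$).

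Given that, the computation is immediate: for $x \in \cO(Y)$,
\[
|U| x_r < 0 \le \sum_{e \in C \setminus \{r\}} x_e,
\]
where the first inequality uses $x_r<0$ and the second uses that every summand is positive (or the sum is empty, in which case it is $0$, but note $C\setminus\{r\}\ne\varnothing$ since a rooted circuit has $|C|\ge 2$, so in fact the sum is strictly positive). Hence $x \notin K(C,r)$, which shows $\cO(Y) \cap K(C,r) = \varnothing$. Since $K(C,r)$ is one of the open halfspaces whose intersection defines $K(\cR)$, we get $K(\cR) \subseteq K(C,r)$, and therefore the hyperplane $H(C,r)$ has $\cO(Y)$ on one side (in its closed complement $\overline{K(C,r)^c}$, i.e.\ where $|U|x_r \le \sum_{e\in C\setminus\{r\}} x_e$) and $K(\cR)$ strictly on the other side, so $H(C,r)$ separates them. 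The same conclusion holds a fortiori for $K \subseteq K(\cR)$ and $K_0 \subseteq K(\cR)$.

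There is essentially no obstacle here; the only point requiring a little care is the direction of the orthant/sign-vector encoding convention. In this paper a set $X$ corresponds to positive coordinates on $X$ and negative coordinates off $X$, and a rooted circuit $(C,r)$ of $\cA$ lies in $\cA^*$ as the $X$-cube with $X(C,r)_e = +1$ on $C\setminus\{r\}$, $=-1$ on $r$, and $=0$ off $C$; so the ``forbidden'' pattern on $C$ is precisely ``all of $C\setminus\{r\}$ present, $r$ absent,'' which is what the hyperplane $H(C,r)$ is designed to cut off. I would simply double-check that the defining inequality of $K(C,r)$, namely $|U|x_r > \sum_{e\in C\setminus\{r\}} x_e$, is violated by exactly this sign pattern — which it is, as shown above — and note that for \emph{any} other set $Y'$ with $Y' \cap C \ne C\setminus\{r\}$ the inequality can be satisfied (this is what Lemma~\ref{lem-Crealized} already guarantees and is not needed here). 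This completes the argument.
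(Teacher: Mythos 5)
Your proposal is correct and follows essentially the same route as the paper: invoke Theorem~\ref{rootedcircuits1}(ii) to obtain a rooted circuit $(C,r)$ with $Y\cap C=C\setminus\{r\}$, then observe that any $x\in\cO(Y)$ has $x_r<0$ and $x_e>0$ for $e\in C\setminus\{r\}$, violating the defining inequality of $K(C,r)$. Your extra remarks on the sign convention and on $C\setminus\{r\}\neq\varnothing$ are accurate but not needed beyond what the paper's own argument already uses.
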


\begin{proof}
  Since $Y\notin \mathcal{C}$, by~\Cref{rootedcircuits1} there
  exists a rooted circuit $(C,r)$ such that
  $C\cap Y=C\setminus \{ r\}$. Pick any point
  $y=(y_{1},\ldots,y_{n})$ from the $Y$-orthant $\cO(Y)$ of
  ${\mathbb R}^n$ (i.e., $y_e>0$ for $e\in Y$ and $y_e<0$ for
  $e\notin Y$). Since $C\cap Y=C\setminus \{ r\}$, we have $y_r<0$ and
  $y_e>0$ for any $e\in C\setminus \{ r\}$. Consequently, $y$ does not
  satisfy the inequality $n y_r>\sum_{e\in C\setminus \{ r\}} y_e$,
  and thus $y \notin K$.  As a result, each orthant $\cO(Y)$ with
  $Y\in \cC^*$ is separated from $K(\cR)$ by a hyperplane of the form
  $H(C,r)$ with $(C,r)\in \cR$. 
\end{proof}

Similarly, if $Y \in \cC \setminus \cC'$, then we can find a positive circuit 
$P \in \cP$ such that the hyperplane $H(P)$ separates the
$Y$-orthant $\cO(Y)$  from the polyhedron $K(\cP)$ (and thus from $K$ and $K_0$).

\begin{lemma}\label{lem-onlyCrealizedD}
  For any $Y \in \cC \setminus \cC'$, there exists a positive circuit 
  $P \in \cP$ such that $\cO(Y) \cap K(P) = \varnothing$. Consequently, the hyperplane $H(P)$ 
  separates $\cO(Y)$ from $K(\cP)$. 
\end{lemma}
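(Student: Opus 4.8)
The plan is to argue exactly in parallel with the proof of \Cref{lem-onlyCrealizedK}, using \Cref{lem-conflicts-def} to produce the required positive circuit. First I would note that, since $Y\in\cC$ but $Y\notin\cC'$, the characterization $\cC'=\{X\in\cC:\forall P\in\cP(\cI),\ P\not\subseteq X\}$ of \Cref{lem-conflicts-def} forces the existence of a positive circuit $P\in\cP:=\cP(\cI)$ with $P\subseteq Y$. This is the only nontrivial ingredient; everything that follows is an elementary computation of signs.

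Next I would fix an arbitrary point $y=(y_{1},\ldots,y_{n})\in\cO(Y)$, so that $y_e>0$ for $e\in Y$ and $y_e<0$ for $e\notin Y$. Since $P\subseteq Y$, every coordinate $y_e$ with $e\in P$ is strictly positive. Observe also that $P\neq\varnothing$: indeed $\varnothing\in\cC'$, so $\cC'_{|\varnothing}=\{\varnothing\}$, whereas $2^{\varnothing}\setminus\{\varnothing\}=\varnothing$, so the empty set cannot be a positive circuit. Hence $\sum_{e\in P}y_e>0$, which means $y$ violates the defining inequality $\sum_{e\in P}x_e<0$ of $K(P)$, i.e.\ $y\notin K(P)$. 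As $y\in\cO(Y)$ was arbitrary, this gives $\cO(Y)\cap K(P)=\varnothing$.

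Finally I would record the separation statement. By the above, $\cO(Y)$ is contained in the open halfspace $\{x:\sum_{e\in P}x_e>0\}$, while $K(\cP)=\bigcap_{P'\in\cP}K(P')\subseteq K(P)=\{x:\sum_{e\in P}x_e<0\}$; hence the hyperplane $H(P)$, given by $\sum_{e\in P}x_e=0$, separates $\cO(Y)$ from $K(\cP)$, and a fortiori from $K$ and $K_0$, since both are contained in $K(\cP)$. The only place where anything could go wrong is the invocation of \Cref{lem-conflicts-def}; once a positive circuit $P\subseteq Y$ is in hand, the sign argument is immediate and mirrors the proof of \Cref{lem-onlyCrealizedK} verbatim with $H(P)$ in place of $H(C,r)$.
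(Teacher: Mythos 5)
Your proposal is correct and follows essentially the same route as the paper: invoke Lemma~\ref{lem-conflicts-def} to obtain a positive circuit $P\subseteq Y$, then observe that every point of $\cO(Y)$ has positive coordinates on $P$ and hence violates the defining inequality of $K(P)$. Your extra remark that $P\neq\varnothing$ is a harmless refinement the paper omits (the paper only needs $\sum_{e\in P}y_e\geq 0$ to contradict strict negativity).
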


\begin{proof}
  By~\Cref{lem-conflicts-def}, there exists a positive circuit 
  $P \in \cP$ such that $P \subseteq Y$. Consider a point
  $y=(y_{1},\ldots,y_{n})$ from the $Y$-orthant $\cO(Y)$ of
  ${\mathbb R}^n$. Since $P \subseteq Y$, we have $y_e>0$ for any
  $e\in P$. Consequently, $y$ does not satisfy the inequality
  $\sum_{e \in P} y_e < 0$, and thus $y \notin K(\cP)$.  As a result, each
  orthant $\cO(Y)$ with $Y\in \cC \setminus \cC'$ is separated from
  $K(\cP)$ by a hyperplane of the form $H(P)$ with $P\in \cP$.
\end{proof}

Finally, we show that if $Y \in \cC^*$, then the orthant $\cO(Y)$ does not 
intersect the polyhedron $K(\cR_0)$ defined by the critical rooted circuits 
of $\cA$. 
This follows from the following result. 

\begin{lemma}\label{lem-onlyCrealizedK0}
  For any $Y \in \cC^*$, there exists a rooted set $(A,r)$ and real
  numbers $(a_g)_{g \in A\setminus\{e\}}$ such that
  \begin{enumerate}[(1)]
  \item\label{cond-rooted-set}
    $Y\cap A = A \setminus \{r\}$,
  \item\label{cond-ag-petit} 
    $0 < a_g \leq \frac{1}{|Y|}$ for each $g \in A\setminus\{r\}$,
  \item\label{cond-hyperplan} for any $x \in K(\cR_0)$,
    $ x_r>\sum_{g\in A\setminus \{ r\}} a_{g}x_{g}$.
  \end{enumerate}
\end{lemma}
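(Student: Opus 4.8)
The plan is to produce, for a given $Y\in\cC^*$, a single inequality valid on all of $K(\cR_0)$ whose shape already forbids it on the orthant $\cO(Y)$, and to read off $(A,r)$ and the $a_g$ from it. Since $Y\notin\cC$ and, by Theorem~\ref{criticalrootedcircuits1}, the convex geometry $\cA$ is reconstructed from its family $\cR_0$ of critical rooted circuits, there is a critical rooted circuit $(C,r)\in\cR_0$ with $Y\cap C=C\setminus\{r\}$. I would take $(A,r):=(C,r)$ and $a_g:=\tfrac1{|U|}$ for every $g\in A\setminus\{r\}=C\setminus\{r\}$. Condition (1) is then immediate. For condition (2), note that $\varnothing\in\cC$ forces $Y\neq\varnothing$ while $Y\subseteq U$ gives $1\le|Y|\le|U|$, hence $0<\tfrac1{|U|}\le\tfrac1{|Y|}$. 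For condition (3), since $(C,r)\in\cR_0$ we have $K(\cR_0)\subseteq K(C,r)$, so every $x\in K(\cR_0)$ satisfies $|U|x_r>\sum_{e\in C\setminus\{r\}}x_e$, i.e. $x_r>\sum_{g\in A\setminus\{r\}}a_gx_g$.

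Phrasing the conclusion as a rooted set together with explicit positive coefficients (rather than merely quoting the critical rooted circuit) is tailored to the use of this lemma in the proof of Theorem~\ref{realizations-convex-geometries}: one wants $\cO(Y)\cap K(\cR_0)=\varnothing$ for every $Y\in\cC^*$, and this is then immediate, since for any $y\in\cO(Y)$ we get $y_r<0$ (because $r\notin Y$) while $\sum_{g\in A\setminus\{r\}}a_gy_g>0$ (because $A\setminus\{r\}\subseteq Y$ and each $a_g>0$), so $y$ violates (3) and hence $y\notin K(\cR_0)$. Keeping the statement in this flexible form also makes possible a self-contained route that avoids Theorem~\ref{criticalrootedcircuits1}: start instead from an arbitrary witnessing rooted circuit $(C,r)\in\cR$ provided by Theorem~\ref{rootedcircuits1}(ii); if it is critical, argue exactly as above; otherwise, use that $\conv(C)\setminus\{r\}\notin\cC$ always (indeed $r\in\conv(C\setminus\{r\})$ by Theorem~\ref{rootedcircuits1}(i), and any convex set containing $C\setminus\{r\}$ would contain $r$), while non-criticality produces some $b\in C\setminus\{r\}$ with $\conv(C)\setminus\{r,b\}\notin\cC$; apply the induction hypothesis to that strictly smaller non-convex set and then combine the inequality it yields with the defining inequality of $K(C,r)$, eliminating the auxiliary variable $x_b$ and rescaling.

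If one is content to invoke Theorem~\ref{criticalrootedcircuits1}, there is essentially no obstacle and the content of the lemma rests entirely on that reconstruction result; I would present the short argument of the first paragraph. In the self-contained variant the only delicate point is the bookkeeping in the combination step: eliminating $x_b$ by substitution must not let $x_r$ reappear on the right-hand side (no cyclic dependency among the roots), must keep every coefficient strictly positive, and — the only quantitative constraint — must keep every coefficient at most $\tfrac1{|Y|}$; controlling this last bound is precisely what a minimal choice of the witnessing rooted circuit (equivalently, the critical condition $\conv(C)\setminus\{r,b\}\in\cC$) is there to guarantee.
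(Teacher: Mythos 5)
Your main argument (the first paragraph) rests on a false inference. From Theorem~\ref{criticalrootedcircuits1} you conclude that every $Y\in\cC^*$ admits a \emph{critical} rooted circuit $(C,r)$ with $Y\cap C=C\setminus\{r\}$; this is not true, and the paper's own example refutes it. There, the critical rooted circuits are $(\{1,2,3\},2)$ and $(\{2,3,4\},3)$, yet the non-convex set $Y=\{1,4\}$ satisfies $Y\cap\{1,2,3\}=\{1\}\neq\{1,3\}$ and $Y\cap\{2,3,4\}=\{4\}\neq\{2,4\}$: its only trace-witnesses are the \emph{non-critical} circuits $(\{1,2,4\},2)$ and $(\{1,3,4\},3)$. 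Only Theorem~\ref{rootedcircuits1}(ii), for the full family $\cR(\cA)$, guarantees a witness of the form $Y\cap C=C\setminus\{r\}$; reconstruction from $\cR_0$ does not transfer this trace condition to critical circuits (the phrasing of Theorem~\ref{criticalrootedcircuits1} invites this misreading, but the example shows the literal trace-witness version fails). This is precisely why the lemma is stated for rooted \emph{sets} rather than circuits, and why showing $\cO(Y)\cap K(\cR_0)=\varnothing$ is nontrivial even though $\cO(Y)\cap K(\cR)=\varnothing$ is immediate: one must \emph{derive} a valid inequality for $Y$ by combining several critical-circuit inequalities (in the example, the two critical inequalities combine to $3x_1+x_4<8x_2$, which excludes $\cO(\{1,4\})$).

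Your fallback sketch does point at the right mechanism (eliminating variables between inequalities), but it defers exactly the content of the proof. The paper argues by reverse induction on $|Y|$: for each $e\in U\setminus Y$ with $Y\cup\{e\}\in\cC^*$ it has a rooted set $(A_e,r_e)$ with root $r_e\notin Y\cup\{e\}$; when every $A_e$ contains $e$, it forms the digraph on $U\setminus Y$ with arcs $e\to r_e$, extracts a shortest cycle $e_0\to e_1\to\cdots\to e_k\to e_0$, chains the $k+1$ inequalities around the cycle, and solves for $x_{e_0}$ using $1-\prod_j b_j>0$; a separate computation shows the resulting coefficients are at most $\Delta/(1-\Delta)=1/|Y|$ with $\Delta=1/(|Y|+1)$, which is why condition~(2) carries the bound $1/|Y|$ rather than $1/|U|$ and why the induction must run downward in $|Y|$ from sets adjacent to $\cC$ (where a critical circuit genuinely exists, via extreme points of a minimal extension). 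Your proposed induction runs in the opposite direction (on a smaller non-convex set $\conv(C)\setminus\{r,b\}$ unrelated to $Y$), does not explain how the resulting inequality, whose root and support need not match, recombines into one witnessing $Y$, assumes away the cyclic dependencies among roots that are the crux of the argument, and never verifies the coefficient bound. As written, the proposal does not establish the lemma.
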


\begin{proof}
  In order to prove the lemma, we employ an \emph{elimination
    procedure} similar to Gaussian elimination. Note that the rooted
  sets $(A,r)$ we consider are not necessarily circuits. 
  We show the lemma by reverse
  induction on $|Y|$.

  The following claim ensures that the property holds for any
  $Y \in \cC^*$ such that $Y\cup \{ r\}\in \cC$ for some
  $r\in U\setminus Y$.
  
  \begin{claim}
    For any $Y \in \cC^*$, if there exists $r \in U\setminus Y$ such
    that $Y\cup\{r\} \in \cC$, then there exists a critical rooted circuit
    $(C,r)$ such that $C \subseteq Y$, and thus for each $x \in K(\cR_0)$,
    we have $x_r >\sum_{g\in C\setminus \{ r\}} \frac{1}{n}x_{g}$.
  \end{claim}

  \begin{claimproof}
      
    Consider a minimal subset $Z \subseteq Y$ such that $Z \in \cC^*$
    and $Z\cup\{r\} \in \cC$. Let $C = \ext(Z \cup \{r\})$ and note
    that $Z\cup\{r\} = \conv(C)$ by (C4).  By the definition of $C$, for any
    $z \in C$, we have $Z \cup \{r\}\setminus \{z\} \in \cC$, and by minimality of $Z$,
    we have $Z\setminus \{z\} \in \cC$. Consequently, $(C,r)$ is a critical rooted
    circuit of $\cC$ and thus by the definition of $K(\cR_0)$, for each
    $x \in K(\cR_0)$, we have
    $x_r >\sum_{g\in C\setminus \{ r\}} \frac{1}{n}x_{g}$.
  \end{claimproof}
  
  Suppose now that $Y \in \cC^*$ and that for any
  $e \in U\setminus Y$, we also have $Y\cup\{e\} \in \cC^*$.  By induction
  hypothesis, for any such set $Y\cup \{ e\}$, there exists a rooted set
  $(A_e,r_e)$ and coefficients $(a_g)_{g \in A_e\setminus\{r_e\}}$
  such that
  \begin{itemize}
  \item $(Y \cup \{e\}) \cap A_e = A_e\setminus \{r_e\}$,
  \item $0 < a_g \leq \frac{1}{|Y|+1}$ for each $g \in A_e\setminus\{r_e\}$,
  \item $x_{r_e}>\sum_{g\in A_{e}\setminus \{r_e\}} a_{g}x_{g}$ for
    any $x \in K(\cR_0)$.
  \end{itemize}

  If there exists $e \in U\setminus Y$ such that $e \notin A_e$, then
  $Y \cap A_e = A_e\setminus \{r_e\}$ and we are done. Consequently,
  we now assume that for any $e \in U\setminus Y$, we have
  $e \in A_e$, i.e., for any $e \in U\setminus Y$, we have
  $Y \cap A_e = Y\setminus\{e,r_e\}$.
  Consider the digraph $D_Y$ having $U\setminus Y$ as the vertex-set
  and where there is an arc $(e,f)$ precisely when $f = r_e$. Note
  that in $D_Y$ every vertex has an out-neighbor and consequently,
  $D_Y$ contains a cycle. Consider a shortest cycle
  $(e_0, e_1, \ldots, e_k,e_{k+1} = e_0)$ in $D_Y$. By induction
  hypothesis, for each $0\leq i \leq k$, there exists a rooted set
  $(A_{e_i},e_{i+1})$ and coefficients
  $(a_{i,g})_{g \in A_{e_i}\setminus\{e_{i+1}\}}$ such that
  \begin{itemize}
  \item $(Y \cup \{e_i\}) \cap A_{e_i} = A_{e_i} \setminus \{e_{i+1}\}$,
  \item $0 < a_{i,g} \leq \frac{1}{|Y|+1}$ for each
    $g \in A_e\setminus\{e_{i+1}\}$,
  \item for any $x \in K(\cR_0)$,
    $x_{e_{i+1}} > \sum_{g\in A_{e_i}\setminus \{ e_{i+1}\}}
    a_{i,g}x_{g} \quad = \quad a_{i,e_i}x_{e_i} + \sum_{g\in
      A_{e_i}\setminus \{ e_i,e_{i+1}\}} a_{i,g}x_{g}$.
  \end{itemize}


For each $0 \leq i \leq k$, let $b_i = a_{i,e_i}$ and observe that
$0< b_i \leq \frac{1}{|Y|+1} < 1$. Consequently,
$0 < \prod_{i=0}^{k} b_i < 1$.  For each $0 \leq i \leq k$ and each
$g \in Y\setminus A_{e_i}$, let $a_{i,g} = 0$. Observe that for any
$0 \leq i \leq k$, for every $x \in K(\cR_0)$, we thus obtain 
$x_{e_{i+1}} > b_ix_{e_i} + \sum_{g\in Y} a_{i,g}x_{g}$.
We have the following inequality:

\begin{claim}
  For every $0\leq i \leq k$ and  every point $x \in K(\cR_0)$, we have:
\begin{align*}
  x_{e_{i+1}} > \left(\prod_{j=0}^{i} b_j\right)x_{e_0} +\sum_{g\in Y} \left(
  \sum_{j=0}^{i} \left(\prod_{\ell=j+1}^{i} b_{\ell}\right)
  a_{j,g} \right) x_g.
\end{align*}
\end{claim}  
\begin{claimproof}
  We prove the claim by induction on $i$. The claim trivially holds
  for $i = 0$. Assume that the claim holds for $i<k$ and note that for
  any point $x \in K(\cR_0)$, we have:
\begin{align*}
  x_{e_{i+2}} &> b_{i+1}x_{e_{i+1}} + \sum_{g\in Y} a_{i+1,g}x_{g}\\
              &> b_{i+1}
                \left(\left(\prod_{j=0}^{i} b_j\right)x_{e_0}
                +\sum_{g\in Y} \left(
                \sum_{j=0}^{i} \left(\prod_{\ell=j+1}^{i} b_{\ell}\right)
                a_{j,g} \right) x_g\right) + \sum_{g\in Y} a_{i+1,g}x_{g}\\
              &= \left(\prod_{j=0}^{i+1} b_j\right)x_{e_0}
                +\sum_{g\in Y} \left(
                \sum_{j=0}^{i} \left(\prod_{\ell=j+1}^{i+1} b_{\ell}\right)
                a_{j,g} \right) x_g + \sum_{g\in Y} a_{i+1,g}x_{g}\\
              &= \left(\prod_{j=0}^{i+1} b_j\right)x_{e_0}
                +\sum_{g\in Y} \left(
                \sum_{j=0}^{i+1} \left(\prod_{\ell=j+1}^{i+1} b_{\ell}\right)
                a_{j,g} \right) x_g.
\end{align*}
\end{claimproof}

Consequently, for every point $x \in K(\cR_0)$, we have
\begin{align*}
  x_{e_0} = x_{e_{k+1}} & > \left(\prod_{j=0}^{k} b_j\right)x_{e_0} +\sum_{g\in Y} \left(
                          \sum_{j=0}^{k} \left(\prod_{\ell=j+1}^{k} b_{\ell}\right)
                          a_{j,g} \right) x_g.
\end{align*}
This implies the inequality
\begin{align*}
  \left(1-\prod_{j=0}^{k} b_j\right)x_{e_0}
                        &> \sum_{g\in Y} \left(
                          \sum_{j=0}^{k} \left(\prod_{\ell=j+1}^{k} b_{\ell}\right)
                          a_{j,g} \right) x_g,
\end{align*}
yielding
\begin{align*}
  x_{e_0} &> \frac{1}{1-\prod_{j=0}^{k} b_j}\sum_{g\in Y} \left(
            \sum_{j=0}^{k} \left(\prod_{\ell=j+1}^{k} b_{\ell}\right)
            a_{j,g} \right)x_g. 
\end{align*}

For any $g \in Y$, let
$a'_g = \frac{1}{1-\prod_{j=0}^{k} b_j} \sum_{j=0}^{k}
\left(\prod_{\ell=j+1}^{k} b_{\ell}\right)a_{j,g}$. Let
$C' \subseteq Y$ be the support of $(a'_g)_{g \in Y}$, i.e., the set
$\{g \in Y: a'_g \neq 0\}$.  Observe that the rooted set
$(C' \cup \{e_0\}, e_0)$ and the coefficients $(a'_g)_{g \in C'}$
satisfy Conditions~(\ref{cond-rooted-set}) and~(\ref{cond-hyperplan}) of the lemma.
Note that for each $0 \leq i \leq k$ and each $g \in Y$, we have 
$0< b_i \leq \frac{1}{|Y|+1}$ and $0 < a_{i,g} \leq
\frac{1}{|Y|+1}$. It is obvious that $a'_g > 0$ for any $g \in C'$.
In order to establish Condition~(\ref{cond-ag-petit}), it is then enough to show
that $a'_g \leq \frac{1}{|Y|}$ for any $g \in Y$. Let
$\Delta = \frac{1}{|Y|+1}$ and note that
$\frac{1}{1-\prod_{j=0}^{k} b_j} \leq
\frac{1}{1-\Delta^{k+1}}$. Consequently, for every $g \in Y$, we have

\begin{align*}
  a'_g &=   \frac{1}{1-\prod_{j=0}^{k} b_j}  \sum_{j=0}^{k} \left(\prod_{\ell=j+1}^{k}
         b_{\ell}\right)a_{j,g}
         \leq  \frac{1}{1-\Delta^{k+1}} \sum_{j=0}^{k} \left(\prod_{\ell=j+1}^{k}
         \Delta\right) \Delta = \frac{\Delta}{1-\Delta^{k+1}}\sum_{j=0}^{k} \Delta^{k-j} \\
       & = \frac{\Delta}{1-\Delta^{k+1}} \sum_{j=0}^{k} \Delta^{j}
         = \frac{\Delta}{1-\Delta^{k+1}}\cdot \frac{1-\Delta^{k+1}}{1-\Delta}
         = \frac{\Delta}{1-\Delta}
         = \frac{\frac{1}{|Y|+1}}{1-\frac{1}{|Y|+1}}
         = \frac{1}{|Y|+1} \cdot \frac{|Y|+1}{|Y|+1-1}\\
       & = \frac{1}{|Y|}, 
\end{align*}
which concludes the proof of the lemma. 
\end{proof}

\begin{proof}[Proof of~\Cref{realizations-convex-geometries}] 
  The proof that $(\cH_n,K)$ is a realization of  $\cI$  is obtained by combining Lemmas~\ref{lem-Crealized}, \ref{lem-onlyCrealizedK}, and~\ref{lem-onlyCrealizedD} and the equality $K=K(\cR)\cap K(\cP)$. 
  The proof that $(\cH_n,K_0)$ is a realization of $\cI$ is obtained by combining Lemmas~\ref{lem-Crealized}, \ref{lem-onlyCrealizedK0},
  and~\ref{lem-onlyCrealizedD} and the equality $K_0=K(\cR_0)\cap K(\cP)$. 
  Finally, the proof of the first assertion of the theorem that the pairs $(\cH_n,K(\cR))$ and
  $(\cH_n,K(\cR_0))$ are realizations of $\cA$ is a direct
  consequence of the second assertion because a convex geometry $\cA$
  is an ideal of itself not containing positive circuits (and thus $K=K(\cR)$ and
  $K_0=K(\cR_0)$). 
\end{proof}

\begin{example}
Consider the convex geometry $\cA = (U,\cC)$ on the left of~\Cref{fig:convsemigeo}. Here, $U = \{1,2,3,4\}$ and
$\cC = \{\varnothing, \{1\}, \{2\}, \{3\}, \{4\},$ $\{1,2\},
\{2,3\}, \{3,4\}$, $\{1,2,3\}, \{2, 3, 4\}, \{1,2,3,4\}\}$. The
rooted circuits of $\cA$ are
$(\{1,2,3\},2)$, $(\{2, 3, 4\},3)$, $(\{1,2,4\},$ $2)$, and $
(\{1,3,4\},3)$. Then in $\R^4$, $\cA$ is realized by the convex
set defined by the following inequalities:
\begin{align*}
  x_1+x_3 &< 4 x_2 & x_2+x_4 &<4 x_3 &  x_1+x_4 &<4 x_2 &  x_1+x_4 &<4 x_3 
\end{align*}

On the other hand, among the rooted circuits of $\cA$, only
$(\{1,2,3\},2)$ and $(\{2, 3, 4\},3)$ are critical. The two first
inequalities above correspond to these two critical rooted
circuits. These two inequalities are sufficient to have a realization
of $\cA$ in $\R^4$. Indeed, from the two first inequalities, we obtain
$4x_1 + x_4 < 15x_2$ and $x_1 + 4x_4 < 15x_3$. This implies that there
is no point in the realization corresponding to the sets
$\{1,4\}, \{1,2,4\}$, and $\{1,3,4\}$, that are precisely the subsets
of $U$ that are forbidden by the circuits $(\{1,2,4\},2)$, and
$ (\{1,3,4\},3)$.


\end{example}

From~\Cref{point-config} and~\Cref{pointshelling}, we obtain the following corollary. 
\begin{corollary}
    Any convex geometry $\cA = (U,\cC)$ admits a generalized convex shelling $\cC(U,Q_0)$ where $|Q_0|$ is the number of critical rooted circuits of $\cA$.
\end{corollary}

\subsection{Realization of convex geometries and convex dimension}
In the spirit of Kashiwabara et al.~\cite{KaNaOk}, Richter and
Rogers~\cite{RiRo} established that every convex geometry of convex
dimension $d:=\cdim(\cC)$ can be represented as a generalized convex shelling in
$\R^d$. The proof of the following result can be
seen as a dualization of the proof of~\cite{RiRo}.

\begin{theorem}\label{th-convgeom-cdim}
  A convex geometry $\cA = (U,\cC)$ of convex dimension $d:=\cdim(\cC)$ is
  realizable in $\R^d$.
\end{theorem}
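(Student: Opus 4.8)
The plan is to dualize the Richter--Rogers construction. By the Edelman--Jamison characterization of convex geometries via order convexity~\cite{EdJa}, fix a set $\Upsilon=\{\leq_1,\dots,\leq_d\}$ of $d=\cdim(\cC)$ total orders on $U$ generating $\cC$, and let $r_i:U\to\{1,\dots,n\}$ be the rank function of $\leq_i$ (so $r_i(e)=n$ for the $\leq_i$-largest element). Writing $\mathbf r(e)=(r_1(e),\dots,r_d(e))$, a nonempty $C$ lies in $\cC$ exactly when $C=\{f:r_i(f)\ge\tau_i\text{ for all }i\}$ for some threshold vector $\boldsymbol\tau$; that is, $\cC$ is the collection of principal filters of the coordinatewise order on $\{\mathbf r(e):e\in U\}$, together with $\varnothing$. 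I would then build a point representation $(P,Q)$ of $\cA$ in $\R^d$ in the sense of \Cref{point-config}: for each $e$ put $p_e=(M^{-r_1(e)},\dots,M^{-r_d(e)})$ for a constant $M=M(n,d)$ chosen large later, set $P=\{p_e:e\in U\}$, and let $Q=\{\mathbf 0,-Ne_1,\dots,-Ne_d\}$ for a large $N=N(M,n,d)$. Since convex geometries are acyclic, by \Cref{point-config} it suffices to show $\cM(P,Q)=\cC$; this yields a realization of $\cA$ by a central arrangement of $n$ hyperplanes together with an open polyhedral cone in $\R^d$, hence $\edim(\cA)\le\cdim(\cC)=d$.

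\textbf{The two inclusions.} For $\cC\subseteq\cM(P,Q)$, given $C\in\cC$ with threshold $\boldsymbol\tau$ I would use the explicit linear functional $L(z)=\sum_i M^{\tau_i}z_i$. For $f\in C$ every exponent $\tau_i-r_i(f)$ is $\le 0$, so $L(p_f)=\sum_i M^{\tau_i-r_i(f)}\le d$; for $f\notin C$ some exponent is $\ge 1$, so $L(p_f)\ge M$; and $L\le 0$ on $Q$. Hence for any $t\in(d,M)$ the halfspace $\{z:L(z)<t\}$ contains $Q$ and meets $P$ exactly in $\{p_f:f\in C\}$, so $C\in\cM(P,Q)$ (the cases $C=U$ and $C=\varnothing$, the latter using $\boldsymbol\tau=(n{+}1,\dots,n{+}1)$, are handled the same way). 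For $\cM(P,Q)\subseteq\cC$ I would argue via the rooted-circuit characterization of \Cref{rootedcircuits1}: if $X\notin\cC$, there is a rooted circuit $(D,r)$ of $\cA$ with $D\cap X=D\setminus\{r\}$, so $D':=D\setminus\{r\}\subseteq X$ and $r\notin X$, and from $r\in\conv(D')$ one gets $r_i(r)\ge\min_{f\in D'}r_i(f)$ for every $i$. The key extra input is that each $r_i$ is a bijection, so in fact $r_i(r)\ge\min_{f\in D'}r_i(f)+1$; consequently $p_r\le\tfrac1M\,u$ coordinatewise, where $u=\bigvee_{f\in D'}p_f$ is the coordinatewise maximum. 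The remaining point is to deduce $p_r\in\conv(\{p_f:f\in D'\}\cup Q)$: since any halfspace containing $Q$ is convex and contains $\{p_f:f\in D'\}\subseteq\{p_f:f\in X\}$ and $Q$, it then contains $p_r$, forcing $r\in P\cap H^+$ and contradicting $r\notin X$; so $X\notin\cM(P,Q)$.

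\textbf{Main obstacle.} I expect the delicate step to be exactly this last one: checking that the exponential spacing of the coordinates makes $\conv(\{p_f:f\in D'\}\cup\{\mathbf 0,-Ne_1,\dots,-Ne_d\})$ ``box-like'' enough to swallow $p_r$, even though the corner $u=\bigvee_{f\in D'}p_f$ itself typically lies \emph{outside} this hull. Concretely, one should verify that once $M$ is large relative to $n,d$ (and then $N$ large relative to $M$), the whole box $\prod_i[0,u_i/M]$ is contained in that convex hull --- a short but fiddly estimate exploiting that each $p_f$ has its mass concentrated in the coordinates where $r_i(f)$ is smallest, so that small convex combinations of the $p_f$'s (completed by the origin and the rays $-Ne_j$) already reach every point dominated by $u/M$. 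An alternative, equivalent route for this direction is to argue directly that every halfspace $\{z:\langle a,z\rangle>b\}$ containing $Q$ --- which forces $a_i$ small and $b<0$ --- cuts $P$ precisely in a principal filter, again by the domination-of-exponentials argument. Granting this, $\cM(P,Q)=\cC$, and \Cref{point-config} completes the proof that $\cA$ is realizable in $\R^d$.
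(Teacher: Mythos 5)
Your construction is correct, and it is in substance the polar dual of the one in the paper: we define, for each $e\in U$, the hyperplane $H_e=\{x:\sum_{i=1}^d x_i/b_i(e)=1\}$ with $b_i(e)$ a geometric sequence in the rank of $e$ under $\leq_i$, and take $K$ to be the open positive orthant; your points $p_e=(M^{-r_1(e)},\dots,M^{-r_d(e)})$ are exactly the polars of these hyperplanes (with $M$ playing the role of the common ratio $d$), while $\mathbf 0\in Q$ encodes the homogenization and the rays $-Ne_j$ are finite stand-ins for the orthant constraints. Your first inclusion mirrors our Claim~\ref{claimCy} (in both cases the separating object is built from the minima $\min_{\leq_\ell}C$ and the argument is the same domination-of-exponentials computation). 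Where you genuinely diverge is the converse inclusion: we show directly that the sign pattern of every region is an intersection of ending intervals (Claim~\ref{claimyC}), whereas you route through rooted circuits, Theorem~\ref{rootedcircuits1}, and a convex-position estimate. Your ``main obstacle'' does resolve as you expect and is not deep: taking $v=\frac{1}{|D'|}\sum_{f\in D'}p_f$ gives $v_i\ge u_i/|D'|>u_i/M\ge (p_r)_i$ once $M>n$, so $v$ strictly dominates $p_r$ coordinatewise, and then $p_r=\lambda v+\sum_j\mu_j(-Ne_j)$ with $\lambda=\frac{N+\sum_i (p_r)_i}{N+\sum_i v_i}$ and $\mu_j=(\lambda v_j-(p_r)_j)/N\ge 0$ for $N$ large; this is shorter than our Claim~\ref{claimyC} but buys you nothing extra, since both arguments ultimately rest on the ratio between consecutive levels exceeding the number of summands. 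Two bookkeeping points you should make explicit. First, Proposition~\ref{point-config} as stated does not track dimension, so you must note (as its proof shows) that a point representation in $\R^d$ de-homogenizes back to an \emph{affine} arrangement in $\R^d$ precisely because $\mathbf 0\in Q$ forces the lifted cone into the open halfspace $x_{d+1}>0$; in particular the realization you obtain in $\R^d$ is affine, not ``a central arrangement with a cone in $\R^d$'' as written (the central picture lives in $\R^{d+1}$). Second, in the rooted-circuit step you should record why $D\setminus\{r\}\neq\varnothing$ (a rooted circuit has $|D|\ge 2$ since $\varnothing\in\cC|_D$) and why $\conv(D\setminus\{r\})=\bigcap_i\{e:r_i(e)\ge\min_{f\in D\setminus\{r\}}r_i(f)\}$, which is what makes the bijectivity of $r_i$ yield the strict gap of one rank level. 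With these additions the argument is complete.
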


\begin{proof}[Proof of \Cref{th-convgeom-cdim}]
Let $\cA = (U,\cC)$ be defined by a set of $d$ total
orders $(\leq_i)_{1\leq i \leq d}$. For each $e \in C$, let
$j_i(e)$ be the index of $e$ in $\leq_i$, (i.e., there are
precisely $j_i(e)$ elements $e' \in U$ such that $e' \leq_i e$).
Consider the sequence of integers $(a_j)_{0 \leq j \leq |U|}$ such
that $a_j=\frac{d+1}{d-1}(d^j -1)$ for each $0 \leq j \leq
|U|$. Observe that $a_{j+1} = da_j +d +1$ for any $0 \leq j < |U|$.
For each $e \in U$, let $b_i(e) = a_{j_i}(e)$ for each
$1 \leq i \leq d$ and consider the hyperplane
$H_e = \{x : \sum_{i=1}^d \frac{x_i}{b_i(e)} = 1\}$ and the halfspaces
$H_e^+ = \{x : \sum_{i=1}^d \frac{x_i}{b_i(e)} < 1\}$ and
$H_e^- = \{x : \sum_{i=1}^d \frac{x_i}{b_i(e)} > 1\}$.
We consider the hyperplane arrangement $\cH = \{H_e: e \in U\}$ and
its intersection with the positive orthant
$K =\{x : x_i>0, \forall 1 \leq i \leq d \}$. We assert that $(K,\cH)$
realizes $\cA = (U,\cC)$. The proof is a consequence of the two following
claims.

\begin{claim}\label{claimCy}
  For each $C \in \cC$, there exists $y \in K$ such that
  \begin{enumerate}[(i)]
  \item\label{claimCy1} for each $e \in C$, $y \in H_e^+$, i.e.,
    $\sum_{i=1}^d \frac{y_i}{b_i(e)} < 1$, and
  \item\label{claimCy2} for each $e \notin C$,  $y \in H_e^-$, i.e.,
    $\sum_{i=1}^d \frac{y_i}{b_i(e)} > 1$.
  \end{enumerate}
  
\end{claim}

\begin{claimproof}
  If $C = \varnothing$, let $y \in K$ such that $y_i = a_{|U|}+1$ for
  all $1 \leq i \leq d$. Observe that for each $e \in U$ and each
  $1 \leq i \leq d$, $b_i(e) \leq a_{|U|} < y_i$. Consequently, for
  each $e \in U$, $\sum_{i=1}^d \frac{y_i}{b_i(e)} > 1$, and
  $y \in H_e^-$. Thus the claim holds for $C = \varnothing$.
  
  Assume now that $C \in \cC \setminus \{\varnothing\}$. For each
  $1 \leq \ell \leq d$, let $e_{\ell} = \min_{\leq_{\ell}}
  C$. Observe that
  $U \setminus C = \bigcup_{\ell=1}^d \{e \in U: e \leq_{\ell}
  e_{\ell}\}$. Let $c_{\ell}= b_{\ell}(e_{\ell})$.  If
  $e_{\ell} = \min_{\leq_{\ell}} U$, let $c^-_{\ell}=0$ and if
  $e_{\ell} \neq \min_{\leq_{\ell}} U$, let $e^-_{\ell}$ be the
  predecessor of $e_{\ell}$ in $\leq_{\ell}$ and let
  $c^-_{\ell}= b_{\ell}(e^-_{\ell})$.
  Consider the point $y \in K$ such that $y_{\ell} = 1 + c_{\ell}^-$
  for each $1\leq \ell\leq d$.
  For each $e \in C$, note that $e_{\ell} \leq_{\ell} e$ for all
  $1 \leq \ell \leq d$. Consequently,
  $c_{\ell} =b_{\ell}(e_{\ell}) \leq b_{\ell}(e)$ for all
  $1 \leq \ell \leq d$.  Note that
  $c_{\ell} = d + 1 + d c_{\ell}^- = d (1+c_{\ell}^-) +1 > d
  y_{\ell}$. Consequently,
  $\sum_{\ell=1}^d \frac{y_{\ell}}{b_{\ell}(e)} \leq \sum_{\ell=1}^d
  \frac{y_{\ell}}{c_{\ell}} < \sum_{\ell=1}^d \frac{1}{d} = 1$,
  establishing~(\ref{claimCy1}).
  For each $e \notin C$, there exists $\ell$ such that
  $e \leq_{\ell} e^-_{\ell}$. Therefore
  $b_{\ell}(e) \leq b_{\ell}(e^-_{\ell}) = c_{\ell}^- <
  y_{\ell}$. Consequently, $\frac{y_{\ell}}{b_\ell(e)} > 1$ and since
  $\frac{y_i}{b_i(e)} > 0$ for all $1\leq i \leq d$ with
  $i \neq \ell$, we have $\sum_{i=1}^d \frac{y_i}{b_i(e)} > 1$,
  establishing~(\ref{claimCy2}).
\end{claimproof}

\begin{claim}\label{claimyC}
  Consider a point $x \in K$ such that
  $\sum_{i=1}^d \frac{x_i}{b_i(e)} \neq 1$ for all $e \in U$ and let
  $C = \{e \in U : \sum_{i=1}^d \frac{x_i}{b_i(e)} < 1\}$. Then
  $C \in \cC$.
\end{claim}

\begin{claimproof}
  If $C = \varnothing$, then $C \in \cC$ because $\varnothing$ is a convex
  of $\cC$. Assume now that $C \neq \varnothing$.  For each
  $1 \leq \ell \leq d$, let $e_\ell = \min_{\leq_{\ell}} C$.  To
  prove the claim, we establish that
  $C = \{e \in U: e_{\ell} \leq_{\ell} e , \forall 1 \leq \ell
  \leq d\}$.
  If $e \in C$, then for all $1 \leq \ell \leq d$,
  $e_\ell \leq_{\ell} e$ and thus
  $C \subseteq \{e \in U : e_{\ell} \leq_{\ell} e , \forall 1 \leq
  \ell \leq d\}$

  Conversely, suppose that $e_{\ell} \leq_{\ell} e$ for all
  $1 \leq \ell \leq d$.  If there exists $\ell$ such that
  $e = e_\ell$, then by definition of $e_\ell$, we have $e \in C$.
  Assume now that for every $1 \leq \ell \leq d$,
  $e_{\ell} \leq_{\ell} e$. For each $1 \leq \ell \leq d$, let
  $e_{\ell}^+$ be the successor of $e_{\ell}$ in $\leq_{\ell}$ and
  note that $e_{\ell}^+ \leq_{\ell} e$. Let
  $c_\ell = b_\ell(e_\ell)$ and let $c_\ell^+ = b_\ell(e_{\ell}^+)$.
  By the definition of $e_{\ell}$, $e_{\ell} \in C$ and thus
  $\sum_{i=1}^d \frac{y_i}{b_i(e_{\ell})} < 1$. Since $y \in K$, we
  have $\frac{y_i}{b_i(e_{\ell})} > 0$ for each $1 \leq i \leq d$ with
  $i \neq \ell$ and consequently,
  $\frac{y_{\ell}}{c_{\ell}} = \frac{y_{\ell}}{b_{\ell}(e_{\ell})} <
  1$.  Observe that $c_\ell < c_\ell^+ \leq b_{\ell}(e)$ for all
  $1 \leq \ell \leq d$.  Moreover, $c_\ell^+ = d c_\ell + d + 1$ and
  thus $b_{\ell}(e) \geq c_\ell^+ > d c_\ell$.  Consequently,
  $\sum_{\ell=1}^d \frac{y_{\ell}}{b_{\ell}(e)} < \sum_{\ell=1}^d
  \frac{y_{\ell}}{d c_{\ell}} =
  \frac{1}{d} \sum_{\ell=1}^d\frac{y_{\ell}}{c_{\ell}} < \frac{1}{d} d =
  1$. Therefore, $e \in C$ and thus we have
  $C = \{e \in U: e_{\ell} \leq_{\ell} e , \forall 1 \leq \ell
  \leq d\}$, establishing that $C \in \cC$.
\end{claimproof}

By~\Cref{claimCy}, for any convex set $C \in \cC$, there exists a
region corresponding to $C$ in $(K,\cH)$. By~\Cref{claimyC}, any
region of $(K,\cH)$ corresponds to a convex $C$ of $\cA = (U,\cC)$, concluding the proof of the theorem. 
\end{proof}

\subsection{Realization of ideals of convex geometries  and VC-dimension}

In this subsection, we consider the realizations of ideals $\cI=(U,\mathcal{C'})$ of convex geometries by minimizing or  bounding the dimension of the realizing space $\mathbb{R}^d$ by a dimension parameter of $\cI=(U,\mathcal{C'})$.  Since such $\cI=(U,\mathcal{C'})$ are ample \cite{BaChDrKo}, the VC-dimension 
$\VCdim(\cI)$ of $\cI$ coincides with the dimension of the largest cube of $\mathcal{C'}$. 
We are motivated by the following: 

\begin{corollary} \label{distributive}
For every distributive lattice $L$, we have $\VCdim(L)=\edim(L)$.
\end{corollary}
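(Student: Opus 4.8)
The plan is to read off everything from the convex-geometry picture. A distributive lattice $L$ is, by the fundamental theorem of finite distributive lattices, isomorphic to the lattice $J(P)$ of down-sets of its poset $P=\mathcal{J}(L)$ of join-irreducibles; equivalently, $L=(\cC,\subseteq)$ where $\cA=(U,\cC)$ is the ideal alignment of $P$ (so $U=P$ and $\cC$ is the family of down-sets of $P$), which is a convex geometry. In particular $\VCdim(L)$ is the VC-dimension of the set family $\cC$ and $\edim(L)=\edim(\cA)$. With this identification the lower bound is immediate: $\cA$ is a realizable COM by \Cref{realizations-convex-geometries}, its VC-dimension as a COM is $\VCdim(\cC)=\VCdim(L)$, and \Cref{generalbounds} gives $\VCdim(L)\le\edim(L)$.

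For the reverse inequality I would prove that for a distributive lattice the convex dimension of $\cA$ equals $\VCdim(L)$; then \Cref{th-convgeom-cdim} realizes $\cA$ in $\R^{\cdim(\cC)}=\R^{\VCdim(L)}$, so $\edim(L)\le\VCdim(L)$ and we are done. Both parameters will be identified with the width $w$ of $P$ (the size of a largest antichain). First, since $\cC$ is a sublattice of $2^U$, a cube of the $1$-inclusion graph $G(\cC)$ is exactly a Boolean interval $[D_1,D_2]$ of $L$; such an interval of $J(P)$ is the lattice of down-sets of the induced subposet $D_2\setminus D_1$, hence is a Boolean lattice $2^d$ iff $D_2\setminus D_1$ is an antichain of $P$ of size $d$, and every antichain $A\subseteq P$ is realized this way by $D_1=({\downarrow}A)\setminus A$, $D_2={\downarrow}A$; therefore $\VCdim(L)=w$. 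Second, by \Cref{cdim-convexgeom} the convex dimension $\cdim(\cC)$ is the size of a largest antichain of the poset $\mathcal{M}(L)$ of meet-irreducibles; the meet-irreducibles of $J(P)$ are precisely the down-sets $P\setminus{\uparrow}p$, and $p\mapsto P\setminus{\uparrow}p$ is an order isomorphism $P\to\mathcal{M}(L)$, so $\mathcal{M}(L)\cong P$ and $\cdim(\cC)=w$ as well. Hence $\cdim(\cC)=\VCdim(L)$, which together with the lower bound yields $\VCdim(L)=\edim(L)$.

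I expect the only genuinely delicate points to be these two identifications, and in particular $\cdim(\cC)=w$, which rests on \Cref{cdim-convexgeom} together with the (standard but worth stating) fact that a finite distributive lattice has isomorphic posets of join- and of meet-irreducibles. Everything else is a bookkeeping argument that packages the already-established results \Cref{realizations-convex-geometries}, \Cref{generalbounds}, and \Cref{th-convgeom-cdim} around this equality of dimension parameters; no new geometric construction is needed beyond the one in the proof of \Cref{th-convgeom-cdim}.
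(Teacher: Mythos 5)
Your proof is correct and follows essentially the same route as the paper: the lower bound $\VCdim(L)\le\edim(L)$ via \Cref{generalbounds}, and the upper bound by showing $\cdim(\cC)=\VCdim(L)$ and invoking \Cref{th-convgeom-cdim}. The only difference is that the paper simply cites Edelman--Jamison for the equality of convex dimension and VC-dimension of a distributive lattice, whereas you supply a correct self-contained proof by identifying both with the width of the poset of join-irreducibles.
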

\begin{proof}
    The inequality $\VCdim(L)\leq \edim(L)$ follows from~\Cref{generalbounds}. For $\VCdim(L)\geq \edim(L)$ we use the fact the convex dimension and the VC-dimension of a distributive lattice coincide~\cite{EdJa} together with~\Cref{th-convgeom-cdim}.
\end{proof}

This leads us to believe:
\begin{conjecture} \label{ideal_dimension} The Euclidean dimension $\edim(\cI)$ of an ideal  $\cI=(U,\mathcal{C'})$ of a convex geometry 
is always upper bounded by a function of its VC-dimension $\VCdim(\cI)$. 
\end{conjecture}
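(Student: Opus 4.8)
Since \Cref{generalbounds} already gives $\VCdim(\cI)\le\edim(\cI)$, the whole content is the upper bound: for every ideal $\cI=(U',\cC')$ of a convex geometry with $d:=\VCdim(\cI)$ one must produce a realization in $\R^{f(d)}$, and the cases already settled --- distributive lattices in dimension $\VCdim$ (\Cref{distributive}), multisimplicial complexes of dimension $d$ in $\R^{2d}$, and trees in $\R^2$ --- all point to $f(d)=O(d)$ as the target. The plan I would pursue has two interacting parts. First, one should try to strengthen \Cref{th-convgeom-cdim}: its proof places one hyperplane $H_e$ per element $e\in U$ with coordinates read off from $\cdim(\cC)$ generating total orders, so the dimension it attains is governed by the \emph{number} of those orders rather than by $\VCdim$, and there is no reason that number is $O(\VCdim)$ in general. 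One would instead want one hyperplane per element inside $\R^{O(d)}$, exploiting that at every $X\in\cC'$ at most $d$ coordinates can be increased while remaining in $\cC'$ --- i.e., that $\cI$ is \emph{locally} an at-most-$d$-dimensional cube. Equivalently, one wants a dimension reduction on the explicit realization of \Cref{realizations-convex-geometries} in $\R^{|U|}$ by the hyperplanes $H(C,r)$ and $H(P)$, showing that only $O(d)$ of the $|U|$ coordinate directions carry information.

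The second part handles a large subclass and suggests the gluing machinery that the first part also needs. When $\cI$ is an \emph{ideal semialignment} --- which covers median set systems (\Cref{co:mediansetsystem}) and multisimplicial complexes (\Cref{co:ideal}) --- \Cref{ideal} realizes it as an ideal of a distributive lattice, on which \Cref{distributive}, \Cref{th-convgeom-cdim} and the multisimplicial bound give a realization of dimension linear in the VC-dimension of \emph{that} distributive lattice. To upgrade this to a proof of the conjecture for ideal semialignments one needs an \emph{amalgamation lemma} for realizable COMs: if a realizable COM $\cM$ is the union, inside the hypercube, of two realizable sub-COMs $\cM_1$ and $\cM_2$ meeting along a common face in a gated way, then $\edim(\cM)\le\max\{\edim(\cM_1),\edim(\cM_2)\}+O(1)$ whenever the overlap has small VC-dimension --- not merely $\edim(\cM_1)+\edim(\cM_2)+O(1)$. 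Granting such a lemma, one peels off the maximal convex sets of $\cI$ one at a time (each removal keeps $\cI$ an ideal of its convex geometry and cannot raise $\VCdim$), realizes the smaller ideals inductively in dimension $O(d)$, and re-glues while reusing coordinate directions so that the dimension does not accumulate; \Cref{lem:minorclosed} is the bookkeeping tool that carries these bounds down through the intervening deletions and contractions.

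The step I expect to be the real obstacle is controlling the \emph{number of maximal convex sets} of $\cI$, equivalently the number of facets of the polyhedron $K_0$ in \Cref{realizations-convex-geometries}: this number is not bounded by $\VCdim(\cI)$, and \Cref{ideal} inflates the VC-dimension of the surrounding distributive lattice by exactly this factor $k$, so a naive induction on maximal cells --- or the ``sum'' form of the amalgamation lemma --- yields only $\edim(\cI)=O(k\cdot d)$. Turning this into $O(d)$ amounts to showing that the $O(d)$ coordinate directions introduced for the $k$ pieces can be shared, and that is, I believe, where the conjecture's difficulty is concentrated. Indeed this is already the crux of the open question from the introduction of whether every median set system of dimension $d$ is realizable in $\R^{O(d)}$ --- a special case of the conjecture by \Cref{co:mediansetsystem} --- and a solution would plausibly also bear on sample-compression-type questions for ample set systems of bounded VC-dimension.
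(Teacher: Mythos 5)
This statement is a conjecture that the paper itself leaves open; there is no proof of it in the paper to compare against, and your proposal does not close the gap either --- it is a research programme, not a proof. Both of its load-bearing steps are unestablished. The first part asks for a strengthening of \Cref{th-convgeom-cdim} (or a dimension reduction of the realization of \Cref{realizations-convex-geometries}) in which only $O(\VCdim(\cI))$ coordinate directions ``carry information''; no argument is given for why such a reduction exists, and the observation that $\cI$ is locally an at-most-$d$-dimensional cube does not by itself yield a global linear-algebraic compression of the hyperplanes $H(C,r)$ and $H(P)$. The second part rests entirely on an \emph{amalgamation lemma} asserting $\edim(\cM)\le\max\{\edim(\cM_1),\edim(\cM_2)\}+O(1)$ for gated unions of realizable COMs; nothing of this kind is proved in the paper or in your proposal, and proving it would itself be a substantial result --- note that even for median set systems of dimension $d$ (a special case via \Cref{co:mediansetsystem}) the paper explicitly leaves realizability in $\R^{O(d)}$ open, which is strong evidence that no such gluing lemma is currently available.

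You have in fact correctly diagnosed where the difficulty sits: the number $k$ of maximal convex sets is not bounded by the VC-dimension, \Cref{ideal} inflates the ambient VC-dimension by a factor of $k$, and the known machinery (\Cref{convgeom1}) therefore only gives $\edim(\cI)\le\min(|U|,k\cdot\VCdim(\cI))$. Your closing paragraph concedes that turning $O(kd)$ into $O(d)$ is exactly the unsolved crux. So the honest summary is: the partial results you cite (\Cref{distributive}, \Cref{trees}, the multisimplicial bound) are correctly identified as the evidence the paper offers for the conjecture, but the proposal contains no new argument that would upgrade them to the general statement, and the conjecture remains open.
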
 

As a first approach we can get a bound for some situations:

\begin{corollary}\label{convgeom1}
  If $\cA=(U,\cC)$ is a bouquet of downset alignments with $\ell$ maxima, e.g., a median set systems with $\ell$ maxima, then $\cA$ is realizable and $\edim(\cA)\leq\min(|U|,\ell\VCdim(\cA))$.
\end{corollary}
\begin{proof}
  From~\Cref{ideal}, \Cref{co:mediansetsystem}, and
  \Cref{realizations-convex-geometries} we get realizability and the
  dimension bound of $|U|$. Now, using~\Cref{ideal} and
  \Cref{distributive} we obtain the bounds of the form
  $\ell\VCdim(\cA))$.
\end{proof}

In this subsection we confirm~\Cref{ideal_dimension} for trees and multisimplicial complexes (alias, ideals of the grid). Indeed, both our  bounds are best-possible.

A \emph{tree} is a connected acyclic graph. A rooted tree is a tree with a distinguished vertex $r$. A family of sets $\cA=(U,\cC)$ is a \emph{tree} if $\cA$ is a median system whose 1-inclusion graph is a tree.  

\begin{proposition}
    
 \label{trees}
Every tree $T$ can be realized in ${\mathbb{R}}^2$, i.e., $\edim(T)\leq 2$ holds.
\end{proposition}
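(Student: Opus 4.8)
The plan is to realize a tree $T$ as an ideal of a convex geometry and then invoke Theorem~\ref{realizations-convex-geometries}, but being careful to use the structure of $T$ to keep the ambient dimension at $2$ rather than $|U|$. First I would recall that a tree $T$, viewed as a median set system, is an ideal of an ideal alignment, i.e.\ of a distributive lattice; since $\VCdim(T)=1$, one cannot directly apply Corollary~\ref{distributive} to the ambient convex geometry (it has large VC-dimension in general), so a direct geometric construction is needed. The natural approach is to embed the edges of $T$ into $\R^2$ directly: pick a root $\rho$ of $T$, orient all edges away from $\rho$, and assign to each vertex $v$ of $T$ its sign vector over the edge set $U=E(T)$ (coordinate $e$ is $+1$ if $e$ lies on the $\rho$--$v$ path, $-1$ otherwise). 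The topes of the COM we want to realize are exactly these $\{-1,+1\}$-vectors, one per vertex, and two topes are adjacent iff the corresponding vertices are adjacent in $T$.

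The key step is to exhibit coordinate-hyperplane directions and an open convex set $K\subseteq\R^2$ realizing this. Here I would use a planar drawing of $T$: since $T$ is a tree it is planar, and more is true — one can draw $T$ in the plane so that each edge $e$ is a straight segment and, crucially, all the segments corresponding to edges on a common root path are ``nested'' in a way that can be cut out by lines. Concretely, I would place the vertices of $T$ along a convex curve (e.g.\ a parabola) in the order given by a fixed planar embedding / DFS traversal, so that the vertex set is in convex position; then for each edge $e=uv$ with $u$ the parent, the line through the midpoint of the arc separating the ``subtree below $e$'' from the rest of the vertices realizes the hyperplane $H_e$. After an affine change of coordinates sending these $|E(T)|$ lines to have only two distinct normal directions is \emph{not} possible in general, so instead I would argue more carefully: group the edges by the two ``sides'' and realize $T$ as the region pattern of a line arrangement in $\R^2$ intersected with a convex set, then apply Proposition~\ref{orthants} to convert the line arrangement into coordinate hyperplanes of $\R^2$ — but Proposition~\ref{orthants} only gives $n$ coordinates, so the actual content is showing a \emph{$2$-dimensional} realization first, with an arbitrary line arrangement, and then noting that realizability in $\R^2$ by any arrangement is equivalent to realizability by $\cH_2$ via Lemma~\ref{cones} and Proposition~\ref{orthants} (which preserve the dimension $d=2$).

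So the heart of the argument is: \emph{every tree is the adjacency graph of the regions of some line arrangement restricted to some open convex set in $\R^2$}. I would prove this by induction on $|E(T)|$, adding one leaf edge at a time: adding a pendant edge to $T$ corresponds to subdividing one region of the current arrangement-in-$K$ into two by a new line that crosses only that one region (cutting off a small ``ear''), which is always geometrically possible since a region is open and convex. The base case is a single edge, realized by one line splitting a disk into two half-disks. The main obstacle is bookkeeping: one must verify that the new line, chosen to pass through the target region, does not accidentally create spurious new regions or change the adjacency pattern elsewhere — this is ensured by choosing the new line's intersection with $K$ to be a chord lying entirely inside the closure of the single region being split, which is possible because that region, being an intersection of $K$ with finitely many open halfplanes, has nonempty interior and we may slice it by a short chord. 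Once the $\R^2$-realization by a line arrangement and convex $K$ is in hand, Lemma~\ref{cones} followed by Proposition~\ref{orthants} upgrades it to a realization $(\cH_2,K'')$ with coordinate hyperplanes, giving $\edim(T)\le 2$; and $\edim(T)\ge 2$ holds already for the simplest tree with a degree-$3$ vertex, so the bound is tight.
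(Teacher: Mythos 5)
Your overall strategy --- build a line arrangement and a convex set $K$ in $\R^2$ by induction on the tree, cutting off a new cell for each new leaf --- is essentially the paper's approach. Also, the final reduction is unnecessary: a realization by an arbitrary affine line arrangement in $\R^2$ already gives $\edim(T)\le 2$ by the definition of $\edim$, and in any case \Cref{orthants} would move you to $\R^{|E(T)|}$ with coordinate hyperplanes rather than keep you in $\R^2$, so that last step as written is both wrong and superfluous.

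The genuine gap is in the inductive step. The condition you impose on the new line $\ell$ --- that $\ell\cap K$ is a chord contained in the closure of the single region $R$ being split --- is neither sufficient to preserve the adjacency pattern nor guaranteed to be satisfiable. First, even when such a chord exists, the piece of $K$ on the ear's side of $\ell$ may contain regions other than the ear; each such region then acquires $\ell$ as an additional separating hyperplane from everything on the far side, destroying adjacencies. Concretely, realize $K_{1,3}$ by taking $K$ to be a triangle and three lines truncating its corners (the central cell is the root, the three corner cells its leaves). To attach a fourth leaf to the root, a chord of $K$ contained in the closure of the central cell must have its two endpoints on two \emph{different} sides of the triangle (endpoints on the same side give a supporting line cutting off nothing), hence the cut-off piece of $K$ contains a vertex of the triangle and therefore one of the corner cells; that corner cell becomes separated from the shrunken central cell by two lines, so the tope graph is no longer the star. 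Second, the mere fact that $R$ is open, convex and nonempty does not produce any admissible chord: if $\overline R$ meets $\partial K$ in at most a point, every line through $R$ exits $\overline R$ into other regions before leaving $K$. What is actually needed --- and what the paper's proof supplies through its invariant that every current leaf is an isosceles triangular cell with base a hyperplane of $\cH$ and the remaining boundary on $\partial K$ --- is (i) the stronger requirement that the entire piece $K\cap\ell^-$ cut off by the new line be contained in $\overline R$, and (ii) bookkeeping ensuring that every region which will still receive children keeps a nondegenerate arc of $\partial K$ on its boundary, so that such cuts remain available at all later stages. Without maintaining an invariant of this kind your induction can reach a state from which no valid cut exists.
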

\begin{proof}
Consider $T$ rooted at $r$. We will use the basic observation that $T$ can be constructed iteratively from the star of $r$ (i.e., $r$ with all its neighbors) by picking a leaf $\ell$ of the current tree and attaching to it all its children in $T$ at once. We describe a representation of $T$ following this construction sequence with the invariant  that every current leaf $\ell$ is represented by an isosceles  triangular cell whose base is in $\mathcal{H}$.

Let say $\deg(r)=k$.  Then we start with $K$ as a convex $k$-gon and we use $\cH$ to cut off, i.e., truncate, each vertex of $K$ such that an isosceles cell is created. Hence, we end up with a representation of the star of $r$ satisfying the invariant, i.e. there is a triangle with base in $\mathcal{H}$. If $\ell$ has $m$ children, then we add $m$ halfspaces to $K$ forming a convex curve between the intersection points of $\mathcal{H}$ and $K$, and staying inside the triangular region for $\ell$. Now, truncate each corner with a new element of $\mathcal{H}$ such that isosceles regions are created. We have represented the children of $\ell$ in the desired manner. See~\Cref{fig:trees} for an illustration.
\end{proof}

\begin{figure}[htp]
    \centering
    \includegraphics[width=0.8\textwidth]{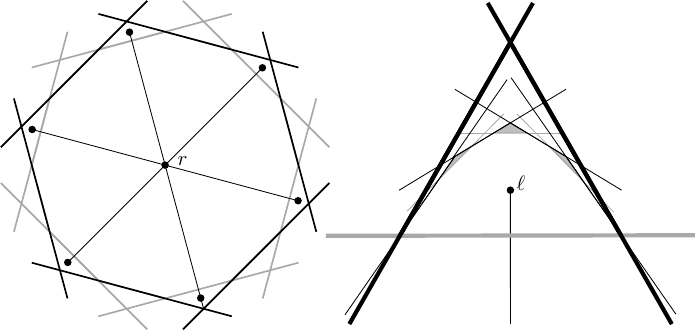}
    \caption{A two-dimensional Euclidean realization for trees.}\label{fig:trees}
\end{figure}

Median set systems are realizable by \Cref{convgeom1}. Since median systems are ample, their VC-dimension coincides with the dimension of a largest cube, and thus with their (topological) dimension. \Cref{distributive} and \Cref{trees} motivate the following strengthening of \Cref{ideal_dimension} in the case of median systems:

\begin{conjecture} Every median system of dimension $d$ admits a realization in ${\mathbb R}^{O(d)}$. 
\end{conjecture}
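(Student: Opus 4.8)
The plan is to realize a median system of dimension $d$ as a convex subgraph of a Cartesian product of $O(d)$ trees. Two ingredients make this a natural target. First, \Cref{trees} already realizes every tree in $\mathbb{R}^2$. Second, realizations multiply with disjoint coordinates: if $\SS_1\subseteq 2^{U_1}$ is realized by $(\cH_{n_1},K_1)$ and $\SS_2\subseteq 2^{U_2}$ by $(\cH_{n_2},K_2)$ over coordinate arrangements (as one may assume by \Cref{orthants}), then the Cartesian product system $\{A\sqcup B: A\in\SS_1,\ B\in\SS_2\}$ on $U_1\sqcup U_2$ is realized by $(\cH_{n_1}\cup\cH_{n_2},\,K_1\times K_2)$, since each region of the product is the product of a region of $K_1$ with a region of $K_2$. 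Hence a product of $c$ trees is realizable in $\mathbb{R}^{2c}$. Finally, a convex subgraph of a median graph is an intersection of halfspaces, hence — after dropping the $\Theta$-classes that become trivial — its tope system is obtained from the ambient system by restrictions, so by \Cref{lem:minorclosed} it is realizable in the same dimension. Thus it suffices to embed a median system of dimension $d$ as a convex subgraph of a product of $O(d)$ trees.

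I would then pass to the split-theoretic picture. The $\Theta$-classes of a median graph $G$ correspond to its pairs of complementary halfspaces, hence to a split system; two $\Theta$-classes are adjacent in the \emph{crossing graph} of $G$ exactly when they span a common square, i.e.\ when the two splits are incompatible (all four quadrants nonempty). A set of pairwise non-adjacent $\Theta$-classes is therefore a pairwise compatible family of splits and is faithfully represented by a tree. Given a proper coloring of the crossing graph with $c$ colors, the $i$-th color class produces a tree $T_i$, and the map sending each vertex of $G$ to the tuple of its images in $T_1,\dots,T_c$ is a convex embedding of $G$ into $T_1\times\cdots\times T_c$ (it is isometric because the $G$-distance of two vertices counts the $\Theta$-classes separating them, split among the colors exactly as the distances in the $T_i$). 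Since the clique number of the crossing graph of $G$ equals the dimension $d$ of $G$, the entire conjecture would follow from the statement that \emph{the crossing graph of a median graph of dimension $d$ has chromatic number $O(d)$.}

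This last reduction is exactly where the approach runs into the main obstacle: crossing graphs of median graphs are not $\chi$-bounded, so there are median systems of dimension $2$ whose crossing graph has arbitrarily large chromatic number, and such a system cannot sit as a convex (or even isometric) subgraph of any product of $O(d)$ trees. Consequently, the plan as stated settles the conjecture only for subclasses where the crossing graph is well behaved — when it is perfect one gets $\edim\le 2d$, when it is planar one gets $\edim=O(1)$, and more generally whenever a Gyárfás-type bound applies; in particular it recovers the $\mathbb{R}^{2d}$ bounds for distributive lattices and multisimplicial complexes, whose crossing graphs are complete multipartite. For the general case one would have to give up products of trees and instead exploit the recursive structure of median graphs directly: build $G$ by Mulder's convex expansions, or as an iterated gated amalgam of lower-dimensional convex pieces, and show that a realization can be maintained in $\mathbb{R}^{O(d)}$ along the way. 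The genuinely hard point is this dimension bookkeeping — keeping $\edim$ from growing with the number of $\Theta$-classes under gated amalgamation — and it may well be equivalent to a currently unknown structural property of median graphs that is invisible from the coloring side.
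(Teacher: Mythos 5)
The statement you are addressing is a conjecture that the paper explicitly leaves open; there is no proof in the paper, and your writeup is not a proof either --- it is, by your own admission, an analysis of why the most natural approach fails. The individual reductions you set up are sound: the product construction for realizations over coordinate arrangements, the fact that convex subgraphs of a median graph correspond to restrictions and hence preserve realizability and dimension by \Cref{lem:minorclosed}, the correspondence between independent sets of the crossing graph and pairwise compatible split systems (hence trees, each realizable in $\mathbb{R}^2$ by \Cref{trees}), and the identification of the clique number of the crossing graph with the dimension of the median system. You also correctly locate the fatal step: since every graph arises as the crossing graph of some median graph, there are two-dimensional median systems whose crossing graphs are triangle-free with arbitrarily large chromatic number, and an isometric embedding into a product of $c$ trees forces a proper $c$-coloring of the crossing graph (crossing $\Theta$-classes must land in distinct tree factors). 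So no $O(d)$ bound on the number of tree factors is possible, and the method cannot settle the conjecture.

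The gap is therefore the conjecture itself: you give no argument for the general case. The proposal to fall back on Mulder's convex expansions or gated amalgamations is only a direction, and the ``dimension bookkeeping'' you flag --- maintaining a realization in $\mathbb{R}^{O(d)}$ under these operations without the dimension growing with the number of $\Theta$-classes --- is precisely the open problem restated, not a step toward solving it. Be careful also not to overstate the obstruction: large chromatic number of the crossing graph rules out tree-product realizations, but a hyperplane realization $(\cH,K)$ need not factor through a product of trees, so your examples do not show that $\edim$ grows; they show only that this particular route is closed. What survives as genuine (and consistent with the paper) are the conditional bounds: $\edim\le 2d$ when the crossing graph is perfect, and the recovery of the $\mathbb{R}^{2d}$ bounds for distributive lattices and multisimplicial complexes, whose crossing graphs are complete multipartite. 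The conjecture remains open.
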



Recall that there exists a bijection between the ideals of the grid $(\mathbb{N}^n,\leq)$ and the multisimplicial complexes on a set $X$ of size $n$.

\begin{theorem}
Every ideal $L$ of $\mathbb{N}^n$ with $\VCdim(L)=d$ has a representation in ${\mathbb R}^{2d}$, i.e., $\edim(L)\leq 2\VCdim(L)$ holds.
\end{theorem}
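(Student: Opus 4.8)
The plan is to prove the bound $\edim(L)\le 2d$ by an explicit geometric construction in $\R^{2d}$ that generalizes the technique of Proposition~\ref{trees} from dimension $1$ --- where it already shows, in particular, that the claw $K_{1,3}$ (a spider, hence a dimension\,$1$ multisimplicial complex) needs two dimensions --- to arbitrary $d$. Using the bijection between ideals of $\mathbb{N}^n$ and multisimplicial complexes, I first write $L$ as a union of boxes $L=\bigcup_{t=1}^{k}B_t$, where each $B_t=\prod_{i\in S_t}\{0,\dots,N^t_i\}$ is a grid supported on a set $S_t\subseteq U$ and the facets $(N^t_i)_{i\in S_t}$ form an antichain. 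The basic fact to record is that $\VCdim(L)=\max_t|S_t|$, which equals $d$: a cube of $L$ is a base point $x$ together with a set $T$ of coordinates with $x+\mathbf{1}_T\in L$, and then $T$ is contained in the support $S_t$ of the facet dominating $x+\mathbf{1}_T$. Hence it suffices to realize a union of at-most-$d$-dimensional grid-boxes in $\R^{2d}$.

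I would then build a realization $(\mathcal{H},K)$ in $\R^{2d}$ incrementally, inserting the boxes $B_1,\dots,B_k$ in an order for which, at step $t$, the overlap $B_t\cap\bigcup_{s<t}B_s$ is a downward-closed subcomplex of $\partial B_t$ that has already been realized (when the complex is shellable this overlap is a single face of $B_t$, otherwise it is a union of boundary faces). The construction maintains the invariant --- the higher-dimensional analogue of the ``isosceles triangle with base in $\mathcal{H}$'' of Proposition~\ref{trees} --- that the cell of every realized element is a product of a grid-cell with a small simplicial cone whose designated facets lie on hyperplanes of $\mathcal{H}$ in the directions still free to be grown. Inserting $B_t$ then means carving its new grid directions into the cone attached at the top element of the already-realized part of $\partial B_t$, by adding a fan of halfspaces and truncating the resulting corners, exactly as children of a leaf are carved out in Proposition~\ref{trees}.

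The point of the construction, and its main obstacle, is that the ambient dimension must stay $\le 2d$, rather than growing like $\sum_t|S_t|$ or $k\cdot d$, which is all the generic ``ideal of a convex geometry'' arguments (Corollaries~\ref{co:ideal} and~\ref{convgeom1}) provide. A coordinate direction is reused across many boxes, and since at every element of $L$ at most $d$ coordinates are active simultaneously, the ``grid directions'' occupy at most $d$ of the $2d$ coordinates at once, leaving $d$ coordinates to branch between boxes and to make the gluings along shared faces transverse: gluing two at-most-$d$-dimensional boxes along a common face of dimension $f$, with the boxes extending in at most $d-f$ further directions each, needs only $f+(d-f)+(d-f)\le 2d$ independent directions. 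The delicate part is to carry this out globally and coherently when $L$ is not shellable --- for instance three $2$-dimensional boxes on the coordinate pairs $\{1,2\}$, $\{2,3\}$, $\{1,3\}$ glued cyclically along edges, where no ordering adds a single face at a time --- so that a new box must be carved against a whole already-realized piece of its boundary while the shape invariant (and hence convexity of the growing region $K$) is preserved; this bookkeeping is the technical heart of the proof.
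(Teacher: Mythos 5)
Your reduction to a union of at-most-$d$-dimensional boxes and the identification $\VCdim(L)=\max_t|S_t|$ are fine, but the construction itself is not carried out, and the step you defer as ``the technical heart'' is exactly where the real difficulty lives. The central problem is not the local count $f+(d-f)+(d-f)\le 2d$ for gluing two boxes: it is that a single chain (coordinate direction) of the underlying poset is shared by many boxes, and in any realization the hyperplane encoding ``chain $e$ at level $i$'' is \emph{one} hyperplane of the arrangement that must simultaneously separate correctly the cells of every box whose support contains $e$. An incremental carve-box-by-box scheme gives you no mechanism to choose these $n\cdot k$ hyperplanes in $\mathbb{R}^{2d}$ so that (i) every collection of up to $d$ of them coming from distinct chains meets transversally wherever a cell of $L$ requires it, and (ii) the region $K$ you are growing remains convex while you repeatedly ``add a fan of halfspaces'' inside previously carved cones. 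In the tree case this works because $d=1$ and each new fan lives inside a single triangular cell never to be revisited; for $d\ge 2$ a direction carved at one box must later be crossed by directions carved at another box, and your invariant does not survive this.

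The paper resolves precisely this point with a global, non-incremental device: take the polar $P\subseteq\mathbb{R}^{2d}$ of a $d$-neighborly polytope on $n$ vertices, so that \emph{any} $\ell\le d$ of its $n$ facet hyperplanes intersect in a face of $P$ of dimension $d-\ell$; the hyperplane for chain $e$ at level $i$ is the corresponding facet hyperplane of the dilate $iP$, and the induction is over the dilates (the levels), not over the facets of the complex. Neighborliness is what lets $n$ ``directions'' coexist in $\mathbb{R}^{2d}$ with all the required $d$-wise transversality, and it is also why the bound $2d$ is tight (the matching lower bound in the paper comes from the nonexistence of $d$-neighborly polytopes below dimension $2d$). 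Without this ingredient, or an equivalent global positioning of the hyperplanes, your proposal does not yield a proof; the non-shellable cyclic example you mention is a symptom, but the obstruction is already present for shellable complexes once two boxes share a direction.
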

\begin{proof}
Let $\VCdim(L)=r$. By Dilworth's Theorem, $L$ may be seen as a semilattice of ideals of a poset $X$ consisting of $n$ disjoint chains of some length $k$, such that each ideal intersects at most $r$ chains. Denote by $L_i$ the  sublattice of $L$ consisting of all the ideals of $L$ intersecting each chain in at most $i$ elements.

Let $P\subseteq \mathbb{R}^{2r}$ be the polar polytope of an $r$-neighborly polytope $P^*\subseteq \mathbb{R}^{2r}$ on $n$ vertices containing the origin. The latter exists~\cite{Gru03} and therefore $P$ has the property that for any set of $\ell\leq r$ facets of $P$, their intersection defines a face of dimension $r-\ell$ of $P$. Now consider the dilates $P_i:=iP$ for all $1\leq i\leq k$ and for each $i$, let $\mathcal{H}_i$ be the arrangement of facet-defining hyperplanes of $P_i$. Set $\mathcal{H}:=\bigcup_{i=1}^k\mathcal{H}_i$.

We will now proceed by induction on $i=1,\ldots,k$ by introducing halfspaces of the convex set $K$ and eventually changing the dilation for some $P_j$ with $j>i$ in such a way that for any $i$ we have $\mathcal{M}(\bigcup_{j=1}^i\mathcal{H}_j,K)=L_i$. We start with the basis case $i=1$.

\begin{case} $i=1$, i.e., $L_1$ is a simplicial complex.
\end{case}

\begin{claimproof}
First, note that every element of $L_1$ is represented by a cell of $\mathcal{H}_1$. Namely, if $x\in L_1$ corresponds to taking one element from each of the chains $C_{i_1}, \ldots C_{i_\ell}$ with $\ell\leq r$, then take the face $f$ of $P$ corresponding to the intersection of facets $f_{i_1}, \ldots f_{i_\ell}$. The corresponding facet defining hyperplanes $H_{i_1}, \ldots H_{i_\ell}$ define a cone over $f$ whose intersection with $P$ is $f$. This cell represents $x$. Since the polytope $P$ is simple (because it is the polar of the simplicial polytope $P^*$), in an $\epsilon$-neighborhood of $f$ we see the ideal of $x$ in $L_1$, where the minimum is represented by the interior of $P$.

Now, suppose that some  cell $C$ of $\mathcal{H}_1$ represents an ideal of $X$ that does not correspond to an element of $L_1$. Since $C$ and $P$ are open and convex we can choose a hyperplane $H$ that separates $C$ and $P$ and intersects the closure $\overline{P}$ of $P$ in $\overline{P}\cap \overline{C}$. Thus, if we add to $K$ the halfspace defined by $H$ that contains $P$, then we remove $C$ from the arrangement and all cells of $\mathcal{H}_1$ that correspond to faces not included in $\overline{P}\cap \overline{C}$ are still present. The only further cells of $\mathcal{H}_1$ that are removed are those that correspond to ideals of $X$ corresponding to supersets of the ideal corresponding do $C$. Since $L_1$ is an ideal, we did not want these regions. Consequently, taking as $K$ the intersection of all open halfspaces containing $P$ and defined by hyperplanes separating $P$ from cells $C$ representing ideals of $X$ that does not correspond to elements of $L_1$, we will obtain that $L_1=\mathcal{M}(\mathcal{H}_1,K)$.
\end{claimproof}

\begin{case} $i>1$.
\end{case}

\begin{claimproof}
By induction hypothesis, we can define the dilates $P_1,P_2,\ldots,P_{i-1}$ of $P$, their arrangements of supporting hyperplanes $\mathcal{H}_1,\mathcal{H}_2,\ldots,\mathcal{H}_{i-1}$, and the convex set $K$ in such a way  that  $\mathcal{M}(\bigcup_{j=1}^{i-1}\mathcal{H}_j,K)=L_{i-1}$. First, suppose that there exists an element $x\in L_{i}$ that is represented by a cell $C$ touching $P_i$ and that lies outside of $K$. Since $L$ is an ideal, the element $y\in L_{i-1}$ cutting the ideal of $x$ to height at most $i-1$ on each chain is in $L$. In particular, this point $p$ is represented by a cell touching $P_{i-1}$ within $K$. Hence, if $q$ is a point in the cell representing $x$, a line from $p$ to $q$ must cross the boundary of $K$. We can thus dilate $P_i$ to a smaller polytope such that the cell representing $x$ intersects the interior of $K$.
In order to avoid now some of the regions touching $P_i$ from the outside, we proceed as in the case of $P_1$ noting that all the hyperplanes that we add to $K$ are disjoint from the interior of $P_i$ hence they do not interfere with the so far obtained representation of $L_{i-1}$.
\end{claimproof}

This concludes the proof of the theorem. 
\end{proof}

The \emph{full simplicial complex} $U_{r,n}$ of dimension $r$ consists of all subsets of size at most $r$ of a set of size $n$. 

\begin{theorem}
For the full simplicial complex $U_{r,n}$ of dimension $r$ on a set $X=\{ f_1,\ldots,f_n\}$, we have  $\edim(U_{r,n})\geq \min(2r,n-1)$.
\end{theorem}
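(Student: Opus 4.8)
The plan is to dualize a hypothetical low-dimensional realization of $U_{r,n}$ into an $r$-neighborly polytope and then invoke the classical bound on the neighborliness of non-simplicial polytopes. We may assume $r\ge 1$, since $U_{0,n}=\{\varnothing\}$ has $\edim=0$. Suppose $U_{r,n}=\cM(\cH,K)$ with $\cH=\{H_1,\dots,H_n\}$ an arrangement of $n$ hyperplanes in $\R^d$ and $K$ an open convex set; the goal is to show $d\ge\min(2r,n-1)$. Since $\varnothing\in U_{r,n}$, some region with all-negative sign vector meets $K$; translating a point $p_0$ of it to the origin and rescaling each defining functional, I may assume $H_i=\{x:\tilde a_i\cdot x=1\}$ with negative halfspace $H_i^-=\{x:\tilde a_i\cdot x<1\}$ containing the origin, where $\tilde a_1,\dots,\tilde a_n\in\R^d$ are pairwise distinct (distinctness follows from simplicity of the arrangement together with the fact that $\varnothing$ and each singleton $\{i\}$ are topes). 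The object to study is the \emph{dual polytope} $P=\conv(\tilde a_1,\dots,\tilde a_n)\subseteq\R^d$, which has $n$ distinct vertices (see Claim~A below).

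The two claims I would establish are: (A) each $\tilde a_i$ is a vertex of $P$; and (B) for every $S\subseteq[n]$ with $|S|\le r$, the set $\{\tilde a_i:i\in S\}$ is precisely the vertex set of a face of $P$ — i.e.\ $P$ is $r$-neighborly. Claim~A is immediate: as $r\ge1$, the singleton $\{i\}$ is a tope, so $K$ contains a point $x$ with $\tilde a_i\cdot x>1$ and $\tilde a_j\cdot x<1$ for $j\ne i$, whence $\tilde a_i$ is the unique maximizer of $a\mapsto a\cdot x$ over $\{\tilde a_1,\dots,\tilde a_n\}$, hence a vertex of $P$. For Claim~B I would argue as follows. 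Since every subset of $S$ has size $\le r$, all $2^{|S|}$ sign vectors supported within $S$ are topes, so I may pick $x_{S'}\in K$ realizing each $S'\subseteq S$. Consider the affine map $\phi\colon\R^d\to\R^{|S|}$, $\phi(x)=(\tilde a_i\cdot x-1)_{i\in S}$; then $\phi(x_{S'})$ lies in the open orthant of $\R^{|S|}$ indexed by $S'$, so the points $\phi(x_{S'})$ hit every open orthant of $\R^{|S|}$. A configuration hitting every open orthant of $\R^k$ has $0$ in its convex hull (if not, a separating functional $c\ne0$ has all these points in $\{c\cdot x\le0\}$, but $\{c\cdot x>0\}$ contains a whole open orthant, a contradiction). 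Pulling back through $\phi$ and using convexity of $K$, I obtain $y\in\conv\{x_{S'}:S'\subseteq S\}\subseteq K$ with $\tilde a_i\cdot y=1$ for $i\in S$ and $\tilde a_i\cdot y<1$ for $i\notin S$ (the latter because each $x_{S'}$ lies strictly in $H_i^-$ for $i\notin S$). Then $\{a:a\cdot y=1\}$ supports $P$ and meets it exactly in $\conv(\tilde a_i:i\in S)$, establishing Claim~B.

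Granting Claims~A and~B, $P$ is an $r$-neighborly polytope with $n$ vertices in $\R^d$. If $P$ is a simplex, its $n$ vertices are affinely independent, so $n-1=\dim P\le d$. If $P$ is not a simplex, then by the classical theorem on neighborly polytopes — a $k$-neighborly polytope with $k>\lfloor\dim P/2\rfloor$ must be a simplex, see~\cite{Gru03} — we get $r\le\lfloor\dim P/2\rfloor\le d/2$, so $2r\le d$. In either case $d\ge\min(2r,n-1)$, giving $\edim(U_{r,n})\ge\min(2r,n-1)$. (Note this also covers the degenerate case $r\ge n$: then $S=[n]$ is an admissible set in Claim~B, forcing $P$ to be $n$-neighborly and hence a simplex, so $d\ge n-1$.)

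The step I expect to require the most care is Claim~B: making the ``Radon-type'' orthant-covering statement precise, and then verifying that the point $y$ produced there lies strictly inside $H_i^-$ for \emph{every} $i\notin S$, so that the supporting hyperplane $\{a\cdot y=1\}$ isolates exactly the face $\conv(\tilde a_i:i\in S)$ and no extra vertex $\tilde a_j$ slips onto it. The normalization, Claim~A, and the appeal to the neighborliness bound are routine once Claim~B is in place.
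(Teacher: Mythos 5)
Your proof is correct and follows essentially the same route as the paper: both pass to the polar/dual polytope on the $n$ normalized hyperplane normals, show it must be $r$-neighborly because every subset of size at most $r$ is a tope, and then invoke the classical fact that an $r$-neighborly polytope on $n$ vertices is either a simplex or has dimension at least $2r$. Your Claim~B (the orthant-covering argument producing the supporting point $y$) is simply a more explicit verification of the neighborliness step, which the paper states tersely in terms of faces of the cell representing $\varnothing$.
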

\begin{proof}
Let $\mathcal{M}(\mathcal{H},K)$ be a realization of $U_{r,n}$. We assume that $K$ is full-dimensional, otherwise we intersect with the affine hull of $K$. Consider the cell $P$ representing the empty set $\varnothing\in U_{r,n}$. Every 1-simplex of $U_{r,n}$ must be represented by a cell intersecting $P$ in a facet. More generally, for any $1\leq \ell\leq r$ every $\ell$-simplex  $\sigma=\{ f_{i_1}, \ldots, f_{i_{\ell}}\}$ must be represented by a cell that is separated  exactly by the facet-defining hyperplanes  $H_{i_1}, \ldots, H_{i_{\ell}}$ from $P$. Hence, the intersection of $H_{i_1}, \ldots, H_{i_{\ell}}$ must be a face of dimension  $r-\ell$ of $P$. Consider now the polyhedron $P'$ by removing all the facet-defining hyperplanes of $K$ from $P$. Hence, the polar polytope $P^*$ of $P'$ must be $r$-neighborly. It is known, that for all $n$, the smallest dimension in which an $r$-neighborly polytope $P^*$ on $n$ vertices exists is $2r$ or $P^*$ is a simplex and hence has dimension $n-1$, see e.g.~\cite{Gru03}. This lower bound carries over to $P'$, hence also to $P$. Hence this is a lower bound for the dimension of $K$.
\end{proof}

\section{Closing remarks}
\paragraph{Concepts of dimension.}
We have shown that the Euclidean dimension $\edim(\cA)$ of a convex geometry $\mathcal{A}=(U,\mathcal{C})$ lies between its VC-dimension $\VCdim(\cA)$ and the minimum of  $|U|$ and  $\cdim(\cA)$. It is known that convex geometries with bounded convex dimension and unbounded $|U|$ exist. On the other hand also there are convex geometries with $\cdim(\cA)$ exponential in $|U|$, see~\cite{knauer2023concepts}. Hence none of our bounds on $\edim(\cA)$ dominates the other. We believe that $\edim(\cA)$ can be bounded by a function of $\VCdim(\cA)$, see~\Cref{ideal_dimension}. Further, it would be interesting to compare $\edim(\cA)$ with other concepts of dimension for convex geometries, e.g.,  Dushnik-Miller dimension, Boolean
dimension, local dimension, and fractional dimension. Their behavior on convex geometries has been studied recently in~\cite{knauer2023concepts}. Another direction are representations by spheres and ellipsoids as studied in~\cite{AAN23}


\paragraph{From ideals to bouquets.}\label{ss-bouquets}

We believe that bouquets as a generalization of ideals deserve further investigation in their own right as a combinatorial structure. In the context of realizability we dare to state the following:

\begin{conjecture} Every bouquet of convex geometries is realizable. 
\end{conjecture}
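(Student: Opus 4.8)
The plan is to build on the rooted-circuit construction of \Cref{realizations-convex-geometries}. Let $\cA=(U,\cC)$ be a bouquet of convex geometries, put $n=|U|$, and let $M_1,\dots,M_k$ be the inclusion-maximal convex sets of $\cC$. The first step is the easy structural observation that for each $i$ the family $\cC_i:=\{X\in\cC:X\subseteq M_i\}$ is the collection of convex sets of a convex geometry $\cA_i=(M_i,\cC_i)$: axioms (C1) and (C2) are inherited, and (C3) for $\cA_i$ is exactly the instance of local extendability (C3$'$) of $\cC$ applied to a pair $X\subsetneq M_i$. Since $\cC=\bigcup_{i=1}^k\cC_i$, a bouquet of convex geometries is literally a bouquet of the convex geometries $\cA_i$, glued along the down-sets $[\varnothing,M_i\cap M_j]$, and by \Cref{realizations-convex-geometries} each $\cA_i$ is realizable. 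Note that the universe of a bouquet need not be a convex set, so $\cA$ is in general not acyclic and the shortcut via point representations (\Cref{point-config}) is unavailable; one has to produce a single open convex polyhedron directly.

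The second step is to realize all the pieces simultaneously inside one open convex polyhedron $K\subseteq\R^n$ cut out by the coordinate hyperplanes $\cH_n$. The candidate defining halfspaces are of two kinds. For a globally valid rooted circuit $(C,r)$ --- one with $\cC|_C=2^C\setminus\{C\setminus\{r\}\}$, necessarily having $C\subseteq M_i$ for some $i$ --- we keep the halfspace $|U|x_r>\sum_{e\in C\setminus\{r\}}x_e$ exactly as in \Cref{realizations-convex-geometries}. For a minimal \emph{transversal obstruction} $P$ --- a minimal subset of $U$ with $P\not\subseteq M_i$ for every $i$ (in particular $\{e\}$ whenever $e$ lies in no $M_i$) --- we add the halfspace $\sum_{e\in P}x_e<0$, which excludes precisely the orthants $\cO(Y)$ with $P\subseteq Y$, none of which is good. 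We then take $K$ to be the intersection of one carefully scaled family of halfspaces per piece. As in \Cref{th-convgeom-cdim}, scaling is the crucial device: by choosing the coefficients used for $\cA_i$ to be geometrically smaller than those used for $\cA_j$ with $j<i$ --- in the manner of the sequence $a_j=\frac{d+1}{d-1}(d^j-1)$ there, or of the bound $a_g\le\frac{1}{|Y|}$ produced by the elimination of \Cref{lem-onlyCrealizedK0} --- one would arrange that the halfspaces coming from $\cA_j$ do not cut off any orthant $\cO(X)$ with $X\in\cC_i$, $i\ne j$. Granting this, the ``inside'' direction, namely $\cO(X)\cap K\ne\varnothing$ for every $X\in\cC$, follows by running the point construction of \Cref{lem-Crealized} inside the unique piece $\cA_i$ that is maximal among those containing $X$ (ordering $M_i\setminus X$ by local extendability within $\cC_i$ and then making the coordinates of $U\setminus M_i$ very negative), the scale separation guaranteeing that the point survives all remaining halfspaces.

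The ``outside'' direction is the main obstacle, and it is substantive rather than bookkeeping. For ideals of convex geometries the forbidden topes were pinned down by \Cref{lem-conflicts-def} through a single ambient convex geometry and its positive circuits, and each forbidden set was detected by \Cref{rootedcircuits1}(ii); by \Cref{nonideal} no such ambient convex geometry exists for a general bouquet, so that apparatus is gone. Worse, a rooted circuit $(C,r)$ of a piece $\cA_i$ need not be globally valid: it can happen that $C\setminus\{r\}=X\cap C$ for some $X\in\cC$ lying in another piece, and then the hyperplane $|U|x_r>\sum_{e\in C\setminus\{r\}}x_e$ would wrongly exclude the good orthant $\cO(X)$, while $\cA_i$ may offer no other rooted circuit witnessing a given forbidden $Y\subseteq M_i$. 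The real content of the conjecture is therefore a \emph{rooted-circuit theory for bouquets}: one must show that every $Y\notin\cC$ admits a certificate --- a globally valid rooted circuit, a transversal obstruction $P\subseteq Y$, or, more likely, a ``mixed'' object obtained by simultaneous Gaussian elimination across the pieces in the spirit of \Cref{lem-onlyCrealizedK0} --- that both separates $\cO(Y)$ from $K$ and never separates a good orthant, and that all these certificates can be realized together by an intersection of coordinate-respecting halfspaces with the per-piece scaling chosen consistently over the arbitrarily overlapping $M_1,\dots,M_k$. Developing such a theory (an analogue of the Dietrich axiomatization \Cref{dietrich}, now for the rooted circuits of the pieces of a bouquet) together with the accompanying compatible scaling is where I expect the difficulty to concentrate.
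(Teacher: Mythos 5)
This statement is not proved in the paper: it is stated as an open conjecture in the closing remarks, so there is no proof of record to compare against. Your proposal does not close it either, and to your credit you say so explicitly.

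What you do establish is correct but preliminary: each piece $\cC_i=\{X\in\cC: X\subseteq M_i\}$ is indeed a convex geometry on $M_i$ (axioms (C1), (C2) are inherited and (C3) is the instance of (C3$'$) for $Y\subsetneq M_i$), and \Cref{realizations-convex-geometries} realizes each piece separately. The genuine gap is exactly the one you name in your last paragraph: the ``outside'' direction. For ideals, \Cref{lem-conflicts-def} and \Cref{rootedcircuits1}(ii) guarantee that every forbidden set is certified either by a rooted circuit of the ambient convex geometry or by a positive circuit, and \Cref{lem-onlyCrealizedK0} shows these certificates can be aggregated into finitely many halfspaces that never cut a good orthant. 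By \Cref{nonideal} no ambient convex geometry exists for a general bouquet, and your proposed certificate classes (globally valid rooted circuits of the pieces, plus minimal transversal obstructions) are not shown to cover all $Y\notin\cC$: a forbidden $Y\subseteq M_i$ might only be witnessed in $\cA_i$ by rooted circuits $(C,r)$ whose halfspace kills a good orthant of another piece, and you give no replacement certificate nor any proof that the ``scale separation'' can be chosen consistently across arbitrarily overlapping $M_1,\dots,M_k$ (the scalings in \Cref{th-convgeom-cdim} and \Cref{lem-onlyCrealizedK0} are each anchored to a single global structure --- a set of total orders, respectively one convex geometry --- and do not obviously compose). Until that ``rooted-circuit theory for bouquets'' is actually developed, the argument is a research programme rather than a proof, and the conjecture remains open.
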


\begin{example}
  The bouquet of convex geometries $(U,\cC')$ of the right of~\Cref{fig:convsemigeo} is realizable. The difference between
  $(U,\cC')$ and the convex geometry $(U,\cC)$ of the left of~\Cref{fig:convsemigeo} is that
  $\{1,2,3,4\} \in \cC \setminus \cC'$ and
  $\{1,4\} \in \cC' \setminus \cC$. Therefore, one cannot obtain a
  representation of $(U,\cC')$ by adding linear constraints to the
  representation of $(U,\cC)$.
  
  However, we claim that $(U,\cC')$ is realized in $\R^4$ by the
  convex set $K$ defined by the following two inequalities:
  \begin{align*}
    x_1+3x_3 &< x_2 & 3x_2+x_4 &<x_3 
  \end{align*}
  Observe that there is no point $x$ in the realization such that
  $x_1 > 0$, $x_3 > 0$, and $x_2 < 0$ (respectively, $x_2 > 0$,
  $x_4 > 0$, and $x_3 < 0$). Consequently, if $X$ is one of the sets
  $\{1,3\}$ or $\{1,3,4\}$ (respectively, $\{2,4\}$ or $\{1,2,4\}$),
  then $K$ does not contain any point of the $X$-orthant. Moreover, by
  adding the two inequalities, we get $x_1+2x_2+2x_3+x_4 <0$, and thus
  for $X = \{1,2,3,4\}$, there is no point $x \in K$ in the
  $X$-orthant. Therefore, the convex set $K$ does not intersect any
  $X$-orthant  such that $X \notin\cC'$.
  We now consider a set $X \in \cC'$ and we construct a point of the
  $X$-orthant that belongs to $K$. If $X$ contains at most two
  elements, let $x_i = 1$ for each $i \in X$. We can then find
  negative values for the remaining coordinates so that both equations
  are satisfied. Consider now the set $X = \{1,2,3\}$ (respectively,
  $\{2,3,4\}$) and the point $x = (1,5,1,-15) \in \R^4$ (respectively,
  $(-15,1,5,1) \in \R^4$). One can check that $x$ corresponds to $X$
  and satisfies both inequalities. Consequently, the bouquet of convex
  geometries $(U,\cC)$ is realizable.
\end{example}


\paragraph{Acknowledgment.} We are grateful to the referees for a careful reading and useful remarks that helped us to improve the presentation of the paper. 

\bibliographystyle{plainurl}
\bibliography{refs-semigeometries}

\end{document}